\documentclass[10pt]{preprint}

\usepackage[margin=2.5cm]{geometry}
\usepackage[utf8x]{inputenc}
\usepackage[english]{babel}
\usepackage[full]{textcomp}

\usepackage[osf]{newtxtext}

\usepackage{amssymb}
\usepackage{amsmath}
\usepackage{amsthm}

\usepackage{mhequ}
\usepackage{booktabs}
\usepackage{tikz}
\usepackage{mathrsfs}
\usepackage[noadjust]{cite}
\usepackage{microtype}
\usepackage{comment}
\usepackage{slashed}
\usepackage{mathtools}
\usepackage{centernot}
\usepackage{footnote}
\usepackage{enumerate}
\usepackage[shortlabels]{enumitem}
\usepackage{stackrel}
\usepackage{longtable}
\usepackage{cprotect}
\usepackage{xstring}
\usepackage{calc}
\usepackage{bbm}
\usepackage{dsfont}

\usepackage[colorlinks=true, pdfstartview=FitV, linkcolor=colorLink, citecolor=colorCite, urlcolor=colorLink, linktocpage=true]{hyperref}

\usepackage[capitalize]{cleveref}

\usepackage{upgreek}
\usepackage{parskip}
\usepackage{graphicx}
\usepackage{orcidlink}

\makeatletter
\renewcommand{\paragraph}{%
\@startsection{paragraph}{4}%
{\z@}{1.5ex \@plus 1.5ex \@minus .2ex}{-0.7em}%
{\normalfont\normalsize\bfseries}%
}
\makeatother

\makeatletter
\def\thm@space@setup{%
  \thm@preskip=\parskip \thm@postskip=0pt
}
\makeatother

\setlist[itemize]{leftmargin=5mm}

\DeclareSymbolFont{timesoperators}{T1}{ptm}{m}{n}
\SetSymbolFont{timesoperators}{bold}{T1}{ptm}{b}{n}
\DeclareMathAlphabet{\mathbb}{U}{jkpsyb}{m}{n}
\SetMathAlphabet{\mathbb}{bold}{U}{jkpsyb}{bx}{n}

\allowdisplaybreaks
\definecolor{colorLink}{RGB}{0,100,162}
\definecolor{colorCite}{RGB}{8,124,100}

\let\alpha\upalpha
\let\beta\upbeta
\let\delta\updelta
\let\gamma\upgamma
\let\mu\upmu
\let\eta\upeta
\let\nu\upnu
\let\rho\uprho
\let\chi\upchi
\let\xi\upxi
\let\zeta\upzeta
\let\tau\uptau
\let\varphi\upvarphi
\let\lambda\uplambda
\let\theta\uptheta
\let\pi\uppi
\let\Upsilon\Upupsilon
\let\Theta\Uptheta
\let\Psi\Uppsi
\let\Xi\Upxi

\def\dash{\leavevmode\unskip\kern0.18em--\penalty\exhyphenpenalty\kern0.18em}
\def\slash{\leavevmode\unskip\kern0.15em/\penalty\exhyphenpenalty\kern0.15em}

\makeatletter
\renewcommand{\operator@font}{\mathgroup\symtimesoperators}
\makeatother

\makeatletter
\DeclareRobustCommand{\TitleEquation}[2]{\texorpdfstring{\StrLeft{\f@series}{1}[\@firstchar]$\if%
b\@firstchar\boldsymbol{#1}\else#1\fi$}{#2}}
\makeatother

\makeatletter
\newcommand{\pushright}[1]{\ifmeasuring@#1\else\omit\hfill$\displaystyle#1$\fi\ignorespaces}
\newcommand{\pushleft}[1]{\ifmeasuring@#1\else\omit$\displaystyle#1$\hfill\fi\ignorespaces}
\makeatother

\renewcommand{\bar}{\overline}
\renewcommand{\hat}{\widehat}
\renewcommand{\tilde}{\widetilde}

\newcommand{\ceil}[1]{\lceil#1\rceil}

\makeatletter
\newcommand{\oset}[3][0ex]{%
  \mathrel{\mathop{#3}\limits^{
    \vbox to#1{\kern-2\ex@
    \hbox{$\scriptstyle#2$}\vss}}}}
\makeatother

\usetikzlibrary{shapes.misc}
\usetikzlibrary{shapes.symbols}
\usetikzlibrary{shapes.geometric}
\usetikzlibrary{decorations}
\usetikzlibrary{decorations.markings}
\usetikzlibrary{calc}
\usetikzlibrary{external}
\usetikzlibrary{arrows}
\usetikzlibrary{patterns}

\usepackage[colorinlistoftodos,prependcaption,textsize=footnotesize, color=red!20]{todonotes}

\usepackage{leftindex}
\usepackage{float}

\newcommand{\dd}{\mathrm{d}}
\newcommand{\g}{\tilde{g}}
\DeclareMathOperator{\Div}{div}

\usepackage{amsmath,amsfonts}

\newcommand{\e}{\mathrm{e}}
\newcommand{\E}{\mathbb{E}}
\newcommand{\F}{\mathcal{F}}
\newcommand{\N}{\mathbb{N}}

\newcommand{\R}{\mathbb{R}}
\newcommand{\T}{\mathbb{T}}
\newcommand{\U}{\mathcal{U}}
\newcommand{\PP}{\mathbb{P}}
\newcommand{\one}{\mathbbm{1}}
\newcommand{\gR}{\gamma_R( \|u\|_{H^k})}
\newcommand{\gRp}{\gamma_R( \|\bar{u}\|_{H^k})}

\newcommand{\grad}{\nabla}

\newcommand{\intT}{\int_{\T^d}}
\newcommand{\csubset}{\subset\joinrel\subset}
\newcommand{\inttn}{\int_0^{t\wedge \sigma_n}}

\newcommand{\LUH}{{\mathcal{L}_2(U,H)}}

\newcommand{\falt}{\ast\eta_\varepsilon}
\newcommand{\faltl}{\ast\eta_{\varepsilon_{m'_l}}}
\newcommand{\sumi}{\sum_{\iota\geq 1}}

\newcommand{\gv}{g_\iota(v\ast\eta_\varepsilon)}

\newcommand{\gdv}{g_\iota'(v\ast\eta_\varepsilon)}

\newcommand{\uml}{u_{\varepsilon_{m'_l}}}

\newcommand{\ue}{u^\varepsilon}
\newcommand{\uem}{u^{\varepsilon,m}}
\newcommand{\uer}{u^{\varepsilon,R}}
\newcommand{\uerp}{\bar{u}^{\varepsilon,R}}
\newcommand{\ur}{u^{R}}
\newcommand{\up}{\bar{u}}
\newcommand{\VV}[1]{\leftindex_{V^*}{\langle} #1 \rangle_V}
\newcommand{\HHH}[1]{\langle #1 \rangle_{H}}
\newcommand{\trn}{{\tau_R^{(n)}}}
\newcommand{\M}{\mathcal{M}(\R)}

\newtheorem{theorem}{Theorem}[section]
\newtheorem*{theorem*}{Theorem}
\newtheorem{corollary}[theorem]{Corollary}
\newtheorem{lemma}[theorem]{Lemma}
\newtheorem*{lemma*}{Lemma}

\newtheorem{defi}[theorem]{Definition}

\theoremstyle{definition}

\newtheorem{remark}[theorem]{Remark}
\newtheorem{assumption}[theorem]{Assumption}

\newtheorem{cntexample}[theorem]{Counterexample}
\newtheorem*{cntexample*}{Counterexample}

\crefname{defi}{Definition}{Definitions}
\crefname{setting}{Setting}{Settings}
\crefname{cntexample}{Counterexample}{Counterexamples}
\crefname{hypo}{Hypothesis}{Hypotheses}
\crefname{assumption}{Assumption}{Assumptions}

\title{Non-local to local limit for stochastic conservation laws}

\author{Sarah Geiss$^{1}$, Benjamin Gess$^{2}$}

\institute{
    Institut f\"{u}r Mathematik, Technische Universit\"{a}t Berlin, Email:
    \href{mailto:geiss@math.tu-berlin.de}{geiss@math.tu-berlin.de}
    \and
    Institut f\"{u}r Mathematik, Technische Universit\"{a}t Berlin \& Max–Planck–Institute for Mathematics in the Sciences, Leipzig, Email: \href{mailto:benjamin.gess@tu-berlin.de}{benjamin.gess@tu-berlin.de} 
    }

\date{October 26, 2026}   

\begin{document}

\maketitle

\bigskip
\begin{abstract}
This work proves the convergence of stochastic nonlocal conservation laws, with spatially inhomogeneous noise, to stochastic conservation laws when the range of particle interaction vanishes. This provides rigorous justification for a class of  stochastic conservation laws that has received considerable attention in the last decade. To handle technical challenges  such as gradient-dependent noise, modified versions of the equations with specific cutoff boundaries are utilized and new uniform estimates to ensure stability during the limiting process are established. In addition, the work demonstrates that this convergence depends on the presence of a viscous term, as a counterexample shows that without it, the limiting solution may fail to be an entropy solution.
\end{abstract}

\tableofcontents

\section{Introduction}
\label{sec:1}
Over the last decade, stochastic scalar conservation laws
    \begin{equation}\label{eq:intro_SSCL}
    du=\nu \Delta u~dt- \nabla \cdot(u A(u,x) \circ d\xi ),
    \end{equation}     
where $\xi$ is white-in-time, spatially correlated noise, have received significant interest, in particular in their non-viscous form $\nu =0$, including the development of well-posedness theories for kinetic and entropy solutions, the analysis of regularity and long-time behavior, cf. for example \cite{LionsPerthameSouganidis, Lions2014, Gess2015, Friz2016, Deya2019, Karlsen2017, Karlsen2018, Chouk2017, Gess2024, Fehrman2023, Gess2017, Ito-DebusscheHofmanovaVovelle}. Informally, these local-in-space stochastic equations are derived as the singular local limit of nonlocal interactions where the flux is determined by a convolution. Specifically, the local term $\nabla \cdot(u A(u,x))$ is understood as the formal limit of the nonlocal term $\nabla \cdot(u A(u * \eta_\varepsilon,x))$ as the kernel $\eta_\varepsilon$ concentrates toward a Dirac mass. While significant progress has been made in the understanding of stochastic scalar conservation laws like \eqref{eq:intro_SSCL}, the rigorous passage from nonlocal-in-space SPDEs to their local-in-space counterparts has not yet been established. The main focus of this work is to provide a first rigorous mathematical justification of this.

Interacting particle systems subject to common noise appear in various settings, including the training of overparameterized neural networks \cite{GessKonarovskyi23, RotskoffVanden2022}, interacting solvents \cite{Kotelenez95, CoghiGess}, nonlinear filtering \cite{KurtzXiong99}, opinion dynamics \cite{zimper2025mean}, and mean field games with common noise \cite{Cardaliaguet19}. In these contexts, the macroscopic evolution is often described by nonlocal SPDEs
    \begin{equation}\label{eq:intro_NSSCL}
    du^{\varepsilon}=\nu \Delta u^{\varepsilon}~dt- \nabla \cdot(u^{\varepsilon}A(u^{\varepsilon}*\eta_{\varepsilon},x) \circ d\xi ),
    \end{equation} 
where the interaction between particles is mediated by a convolution kernel $\eta_\varepsilon$ of length $\varepsilon$. The central hypothesis in the derivation of  stochastic conservation laws is that if the interaction length converges to zero, these nonlocal SPDEs converge to stochastic conservation laws.

In the following, let $\mathbb{T}^d$ denote the  $d$-dimensional flat torus. Informally, the main result obtained in this work is as follows; the detailed statement with precise assumptions is given in Theorem \ref{thm:main} below.

\begin{theorem*}[Theorem \ref{thm:main} below] Assume sufficient regularity of the diffusion coefficient $A$ and the initial condition $u_{0}\ge0$, and impose structural assumptions on the convolution kernel $\eta_{\varepsilon}$. Set $k= \lceil d/2 + 3/2\rceil$. Let $(u, \sigma)$ denote the unique maximal solution of the SPDE \eqref{eq:intro_SSCL}, with corresponding localizing sequence $(\tilde{\sigma}_{m})_{m\in\mathbb{N}}$. Moreover, let $u^{\varepsilon}$ denote the unique global maximal solution of the nonlocal SPDE \eqref{eq:intro_NSSCL}. Then, for every $m\in\mathbb{N}$, in probability,
\begin{equation}
\lim_{\varepsilon\rightarrow0} \sup_{t\in[0,\tilde{\sigma}_{m}\wedge T]} \|u^{\varepsilon}(t)-u(t)\|_{H^{k}(\mathbb{T}^{d})}=0. \end{equation}
\end{theorem*}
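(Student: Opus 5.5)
We plan to obtain the convergence by a cutoff-and-stopping-time argument combined with an energy estimate for the difference $u^\varepsilon-u$ in $H^k$. The first step is to rewrite both equations in Itô form. The Stratonovich-to-Itô corrections produce second-order terms which, together with $\nu\Delta$, give a (degenerate) parabolic operator. We then introduce truncated versions of both equations, multiplying the nonlinear coefficients by $\gamma_R(\|u\|_{H^k})$, respectively $\gamma_R(\|\bar u\|_{H^k})$, with a smooth cutoff $\gamma_R$ that equals $1$ on $[0,R]$ and vanishes beyond $R+1$. For these cutoff equations, global solutions $u^R$ and $u^{\varepsilon,R}$ exist. They coincide with $u$ and $u^\varepsilon$ up to the stopping times $\tau_R=\inf\{t:\|u(t)\|_{H^k}\ge R\}$, respectively $\tau_R^\varepsilon=\inf\{t:\|u^\varepsilon(t)\|_{H^k}\ge R\}$. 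Choosing $R$ so large that $\tilde\sigma_m\le\tau_R$, it suffices to prove
\[
\E\sup_{t\le \tilde\sigma_m\wedge\tau^\varepsilon_R\wedge T}\|u^{\varepsilon,R}(t)-u^R(t)\|_{H^k}^2\to0,
\]
together with $\PP(\tau_R^\varepsilon<\tilde\sigma_m\wedge T)\to0$. The second limit follows from the first: on the event $\{\tau^\varepsilon_R<\tilde\sigma_m\}$, after enlarging the cutoff level, $\|u^\varepsilon\|_{H^k}$ has reached $R$ while $\|u\|_{H^k}\le R-1$, so the difference has $H^k$-norm at least $1$ there.

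The core step is an Itô formula for $\|\partial^\alpha(u^{\varepsilon,R}-u^R)\|_{L^2}^2$ with $|\alpha|\le k$. We split the flux difference as
\[
u^\varepsilon A(u^\varepsilon\ast\eta_\varepsilon,x)-uA(u,x)=\bigl[u^\varepsilon A(u^\varepsilon\ast\eta_\varepsilon,x)-uA(u\ast\eta_\varepsilon,x)\bigr]+\bigl[uA(u\ast\eta_\varepsilon,x)-uA(u,x)\bigr].
\]
The second bracket is a consistency error. Since $k$ derivatives fall on it, we need $u$ bounded in $H^{k+1}$ up to $\tilde\sigma_m$, which the viscosity $\nu\Delta$ provides via $L^2_tH^{k+1}_x$ integrability, or we estimate it in a slightly weaker norm and interpolate. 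This gives a bound $o(1)$ as $\varepsilon\to0$ by $\|f\ast\eta_\varepsilon-f\|_{H^{k}}\to0$. The first bracket must be controlled by $C_R\|u^\varepsilon-u\|_{H^k}^2$ plus a term absorbed by the dissipation $\nu\|\nabla(u^\varepsilon-u)\|_{H^k}^2$. The structural assumptions on $\eta_\varepsilon$ (symmetry, nonnegativity, normalization) are used to make commutator estimates uniform in $\varepsilon$, for instance $\|\partial^\alpha(f\ast\eta_\varepsilon)\|\le\|\partial^\alpha f\|$. Since $k>d/2+1$, Moser/Kato–Ponce product estimates give control of all lower-order terms. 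The martingale term is handled by Burkholder–Davis–Gundy, and Gronwall's lemma then yields the claimed convergence.

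The main obstacle is the gradient-dependent noise. In Itô form, the quadratic variation of the noise term $\nabla\cdot(u^\varepsilon A(u^\varepsilon\ast\eta_\varepsilon)\,dW)$ at the $H^k$ level contains terms with $k+1$ derivatives. These must cancel against the Itô–Stratonovich correction up to lower-order remainders, and this cancellation has to hold uniformly in $\varepsilon$ even though the correction involves mixed terms such as $u^\varepsilon$ and $u^\varepsilon\ast\eta_\varepsilon$. We would first try to prove this cancellation by writing the top-order contribution as a commutator $[\partial^\alpha,A(u^\varepsilon\ast\eta_\varepsilon)\cdot\nabla]$ and using the symmetry of $\eta_\varepsilon$ to transfer derivatives. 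Where exact cancellation fails, we would absorb the residual into the $\nu\Delta$ dissipation. This is precisely where the viscous term is needed, consistent with the counterexample announced in the abstract.
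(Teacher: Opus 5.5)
Your argument is correct in outline, but it follows a genuinely different route from the paper.

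\textbf{What the paper does.} The paper never estimates $u^{\varepsilon,R}-u^R$ directly. It proves $\varepsilon$-uniform bounds for $u^{\varepsilon,R}$ alone, one derivative higher, in $L^\infty_tH^{k+1}\cap L^2_tH^{k+2}$ (\cref{lemma:H^k-uniform-in-eps}). It adds fractional time regularity in $H^{-s}$ (\cref{lemma:Walpha2}). It then combines Simon's compactness, Skorokhod, identification of the limit as a solution of the tamed local equation, and Gy\"ongy--Krylov with uniqueness, which gives convergence in $C([0,T];H^k)$ (\cref{lemma:convergence-of-eq-with-cutoff}). The cutoff is removed essentially as in your first paragraph. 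Because the nonlocal top-order cancellation is done for a single solution, no cross terms between $u^\varepsilon$ and $u$ arise. The cost is $u_0\in H^{k+1}$, the stochastic compactness machinery, and the absence of a rate.

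\textbf{What your route does.} Your stability estimate performs the cancellation for $w:=u^{\varepsilon,R}-u^R$ at the $H^k$ level, and it does close. Suppress the sum over $\iota$ and the cutoff factors, and set $a:=g(u^{\varepsilon,R}\ast\eta_\varepsilon)$ and $b:=u^{\varepsilon,R}g'(u^{\varepsilon,R}\ast\eta_\varepsilon)$. Let $S$ be $\partial^\alpha$ of the noise difference. Then $S=-a\cdot\nabla\partial^\alpha w-b\cdot(\nabla\partial^\alpha w\ast\eta_\varepsilon)+E$. Here $E$ collects lower-order terms, cross terms such as $(g(u^{\varepsilon,R}\ast\eta_\varepsilon)-g(u^R))\cdot\nabla\partial^\alpha u^R$, and consistency errors such as $b\cdot\nabla\partial^\alpha(u^R\ast\eta_\varepsilon-u^R)$. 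Using the symmetry of $\eta$, the top-order parts of the It\^o-correction drift and of $\|S\|^2$ combine into $\langle (b\cdot\nabla\partial^\alpha w)\ast\eta_\varepsilon-b\cdot(\nabla\partial^\alpha w\ast\eta_\varepsilon)+E,\,S\rangle$, up to further terms of the type of $E$. The consistency errors vanish in $L^2_t$ by dominated convergence, since $u^R\in L^2_tH^{k+1}$. So your route needs only the regularity of the variational solutions; apparently $u_0\in H^k$ would suffice. It avoids Skorokhod and the $H^{k+2}$ bounds, and it would give a rate if $u$ were smoother.

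\textbf{Points to fix.} Three things should be made precise.
\begin{enumerate}
\item The decisive commutator is not $[\partial^\alpha,a\cdot\nabla]$, which only yields lower-order terms. It is the convolution commutator $f\mapsto (b\cdot\nabla f)\ast\eta_\varepsilon-b\cdot(\nabla f\ast\eta_\varepsilon)$. Moving the derivative onto the kernel and using $\operatorname{diam}\operatorname{supp}\eta_\varepsilon=O(\varepsilon)$ together with $\varepsilon\|\nabla\eta_\varepsilon\|_{L^1}=\|\nabla\eta\|_{L^1}$, this is bounded on $L^2$ by $C\|b\|_{W^{1,\infty}}$ uniformly in $\varepsilon$. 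This is exactly the estimate in the proof of \cref{lemma:H^k-uniform-in-eps}. Bounds like $\|\partial^\alpha(f\ast\eta_\varepsilon)\|\le\|\partial^\alpha f\|$ do not give it. Without this gain the residual is of full order $\|w\|_{H^{k+1}}^2$ with an $O(1)$ constant, and unit viscosity cannot absorb it.
\item The cross terms and the difference of cutoffs produce $\|w\|_{H^k}^2\|u^R\|_{H^{k+1}}^2$. So the Gronwall factor is the random, time-integrable quantity $C(1+\|u^R\|_{H^{k+1}}^2)$, not $C_R$. Plain Gronwall for $\E\sup\|w\|_{H^k}^2$ therefore does not apply. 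Use \cref{lemma:StochGron} or the weight $\exp(-C\int_0^t(1+\|u^R\|_{H^{k+1}}^2)\,ds)$; this yields convergence in probability, which is enough. If you use BDG, first integrate the top-order part of the martingale integrand by parts. For the nonlocal piece, symmetrize using that $\nabla\eta$ is odd.
\item $\sup_{t\le\tilde\sigma_m\wedge T}\|u(t)\|_{H^k}$ is only a.s.\ finite, so no deterministic $R$ gives $\tilde\sigma_m\le\tau_R$. Instead choose $R$ with $\PP[\tau_R<\tilde\sigma_m\wedge T]\le\bar\delta$, as the paper does.
\end{enumerate}
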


In the deterministic context, the local limits of nonlocal conservation laws have gained significant interest due to applications in traffic and crowd dynamics, particularly in the non-viscous setting (see Subsection~\ref{subsec11} for references and further discussion). The stochastic case, however, differs substantially from the deterministic one. This is due to the change of the nature of the convective term in \eqref{eq:intro_SSCL}, \eqref{eq:intro_NSSCL}: While, in the deterministic setting, the convection is a lower order term, and, thus, positive regularity is readily obtained in the viscous case $\nu>0$, in the stochastic setting, the stochastic convection scales at the same order as the viscous term. As a consequence, even on an informal level, it is not clear which term dominates the dynamics. In particular, it is unknown if solutions to \eqref{eq:intro_SSCL} can develop shocks despite the viscosity in \eqref{eq:intro_SSCL}, even in the viscous and spatially homogeneous case. In addition, on a more technical level, the Ito-Stratonovich correction in \eqref{eq:intro_NSSCL} leads to a nonlocal second order correction term, which makes it significantly challenging to derive uniform a priori estimates. This is the central challenge addressed in this work, and novel local-in-time estimates that are uniform in $\varepsilon$ are obtained.

An additional technical difficulty stems from the fact that the estimates uniform in $\varepsilon$ are available only locally in time. A direct application of standard arguments would therefore lead to $\varepsilon$-dependent stopping times, whose behavior would have to be controlled when passing to the limit $\varepsilon\to0$. We avoid this obstacle by introducing a family of tamed SPDEs  incorporating a cutoff function $\gamma_R(\|u\|_{H^k})$.

The presence of the viscous term $\Delta u dt$ in \eqref{eq:intro_SSCL}, \eqref{eq:intro_NSSCL} is a critical requirement for this convergence, as it has been noticed in the deterministic case in  \cite[Counterexample 5.5]{ColomboCrippaSpinolo}. In the present work, we generalize their counterexample to the stochastic setting:
\begin{cntexample*}(\cref{counterexample} below) Let $T>0$ and assume structural assumptions on the convolution kernel $\eta_\varepsilon$. Let $u^\varepsilon$ denote the nonlocal solution to
  $$du^{\varepsilon}=-\partial_{x}(u^{\varepsilon}(u^{\varepsilon}*\eta_{\varepsilon}))\circ dW_{t},\quad u^\varepsilon_{0}=\mathbb{I}_{[0,1]}$$
and $u$ the entropy solution to
  $$du =-\partial_{x}(u^2)\circ dW_{t},\quad u_{0}=\mathbb{I}_{[0,1]}.$$
Then for all $p>1$ and $(\varepsilon_{l})_{l\in\N} \subseteq (0,1)$ satisfying $\lim_{l\to\infty}\varepsilon_l = 0$  we have
\begin{equation*}
 \PP\left[\lim_{l\to\infty}\|u^{\varepsilon_l}(T) - u(T)\|_{L^p(\R)} =0 \right] =0.
 \end{equation*}
\end{cntexample*}
This failure is evidenced by the fact that $u^\varepsilon$ preserves the $L \log L$ entropy while the entropy solution $u$ strictly decreases it. 

\textbf{Overview of the paper: }
Section~\ref{sec:1} introduces the background of stochastic scalar conservation laws and situates the work within the existing literature. Section~\ref{sec:2} establishes the precise mathematical setting and assumptions and states the main theorem regarding the convergence of nonlocal solutions. Section~\ref{sec:3} introduces tamed SPDEs utilizing cutoff functions to overcome technical obstacles related to noncoercive coefficients. Section~\ref{sec:4} provides regularity theory for the original, non-tamed equations.
The main technical contribution of the paper is presented in  Section~\ref{sec:5}, where we
establish key stability estimates for the tamed nonlocal SPDEs that remain uniform
as the kernel interaction length $\varepsilon$ tends to zero. These estimates play the central role
in the convergence analysis. Building on these results, Section~\ref{sec:6} completes the
proof of the main theorem using compactness arguments and the Skorokhod
representation theorem. Section~\ref{sec:7} provides a counterexample demonstrating that convergence fails in the inviscid setting.

\subsection{Literature}\label{subsec11}
\textbf{Local limits of mean field systems:} The question of convergence of nonlocal conservation laws to their local counterpart was raised by \cite{Amorim2015} in the context of numerical approximations. The rigorous analysis of the singular local limit was initiated by \cite{ColomboCrippaSpinolo}, who exhibited counterexamples to convergence in the non-viscous setting while proving a positive result in the viscous case.  Hence, subsequent results have primarily concerned the non-viscous problem, and, being motivated by traffic models, they focus on anisotropic kernels. Positive convergence results were obtained by \cite{KeimerPflug2019} for monotone initial data and by \cite{Colombo2021} for initial conditions satisfying a one-sided Lipschitz condition and being bounded away from $0$ using uniform total variation estimates;
the latter work also showed that the total variation may blow up when the assumptions on the initial datum are weakened. This analytical framework was expanded by \cite{Bressan2020, Bressan2021}, who established convergence to the entropy solution for exponential kernels by obtaining uniform total variation bounds through a reformulation as a $2\times2$ hyperbolic relaxation system. Further technical breadth was added by \cite{Coclite2021, Coclite2023}, who resolved the ``doubly singular'' vanishing viscosity limit and extended convergence results for exponential kernels under general initial data.Beyond exponential kernels, \cite{ColomboCrippaMarconiSpinolo2023} established convergence for convex anisotropic kernels under weaker assumptions on the initial data, together with an entropy admissibility criterion and a convergence rate. More recently, \cite{Friedrich2024} established the singular limit for a structurally different model with nonlocal velocity, \(v(u)*\eta_\varepsilon\), using uniform BV estimates on the nonlocal velocity, while \cite{Friedrich2025} extended the nonlocal-to-local analysis to optimal control and proved \(\Gamma\)-convergence of the corresponding control problems. Simultaneously, \cite{marconi2025nonlocal} extended the theory to \(2\times2\) systems, including the Aw–Rascle–Zhang model, a second-order macroscopic traffic-flow model. Nonlocal Oleinik-type estimates have been obtained by \cite{Coclite2025}, while \cite{Chiarello2026} generalized the interaction mechanism to nonlinear $L^p$ norms, and rough fluxes have been treated in \cite{Aggarwal2025}. More recently, \cite{CocliteDeNittiHuang2025} proved convergence to the entropy solution for certain non-convex kernels using entropy-production estimates and compensated compactness.

\textbf{Stochastic scalar conservation laws: } The field of stochastic scalar conservation laws with rough stochastic fluxes was inaugurated by Lions, Perthame, and Souganidis, who established kinetic formulations and $L^1$-contraction for rough fluxes \cite{LionsPerthameSouganidis}, a framework later extended to spatially dependent media \cite{Lions2014}. This foundation was broadened to include multiple independent signals by Gess and Souganidis \cite{Gess2015} and transport noise by Friz and Gess \cite{Friz2016}. Significant advancements in regularity and computation were led by Hoel, Karlsen, Risebro and Storr{\o}sten, who analyzed spatial Lipschitz continuity \cite{Karlsen2017} and provided rigorous foundations for numerical methods \cite{Karlsen2018}. In parallel, the well-posedness/regularization by noise perspective demonstrates that stochasticity can restore uniqueness \cite{Gess2018} and improve regularity \cite{Chouk2017,Gess2017}, and stabilize long-time behavior \cite{Gassiat2024}. To handle stochastic fluxes more intrinsically, Deya, Gubinelli, Hofmanova and Tindel developed a theory of rough kinetic solutions independent of flow transformations \cite{Deya2019}. Recent progress includes non-equilibrium large deviations for singular conservative SPDEs \cite{Fehrman2023}, and establishing long-time stability and unique invariant measures \cite{Gess2017}.

\textbf{Mean field systems with common noise: } Mean field systems with common noise appear in situations where shared environmental noise prevents the law of large numbers from collapsing the empirical measure into a deterministic trajectory. An early rigorous treatment of this phenomenon was developed by \cite{Kotelenez95}, who established the existence of solutions for a class of quasilinear SPDEs, and \cite{KurtzXiong1999}, who proved that empirical measures of systems driven by common Brownian motion converge to the stochastic Fokker-Planck equation. The analysis of the limiting stochastic Fokker–Planck equation was further refined by \cite{CoghiGess}, who established well-posedness for merely measure-valued initial conditions, by developing a duality method in the stochastic setting. Subsequently, this line of work has been extended to less regular coefficients in \cite{GessKonarovskyi23} and \cite{bugini2025nonlinear}. The study of singular interactions has been expanded by Nikolaev \cite{nikolaev2025fluctuation,nikolaev2025quantitative}, who derived quantitative relative entropy estimates to provide explicit convergence rates for such systems. 

In the related context of mean-field games, an analytic breakthrough regarding the convergence of large-scale systems was provided by \cite{Cardaliaguet19}, who addressed the convergence problem in the presence of common noise via the master equation. 
In a distinct, pathwise direction, a rough-path formulation of linear–quadratic mean-field games with rough common signals was developed in  \cite{friz2026mean}, establishing well-posedness and stability results.

\section{Viscous setting: Main result on convergence}
\label{sec:2}
Let $(\Omega, \F, \PP, (\F_t)_{t\in[0,\infty)})$ be a filtered probability space satisfying the usual conditions, i.e. $(\F_t)_{t\geq 0}$ is complete and right-continuous. We assume all processes and stopping times to be adapted to this filtered probability space unless stated otherwise.

Let $\T^d$ denote the $d$-dimensional flat torus. We use the standard notation $W^{k,p}(\R^d)$, $W^{k,p}(\T^d)$,  $H^k(\T^d):=W^{k,2}(\T^d)$, $L^p(\T^d):=W^{0,p}(\T^d)$ for the Sobolev spaces and $\|\cdot\|_{\dot{W}^{k,p}(\T^d)}$ for the $W^{k,p}$ seminorm. We occasionally drop the $\T^d$ or $\R^d$ in the notation.

\begin{assumption}\label{assumption}
\begin{enumerate}
\item (Dimension $d$ and order $k$) Let $d\in\N$ denote the dimension of the torus $\T^d$. Let $k= \lceil d/2 + 3/2\rceil = d/2+\alpha\in\N$ where $\alpha = 1.5$ if $d$ is odd and $\alpha = 2$ if $d$ is even.
\item  (Convolution kernel $\eta$) Assume
\begin{equation}\label{eq:EtaProperties}
\eta\in C^\infty(\R^d, [0,\infty)) \,\,  \text{symmetric with compact support,} \quad \int_{\R^d} \eta (x) \dd x = 1.
\end{equation}
For $\varepsilon\in(0,1)$ we set
\begin{equation*}
 \eta_\varepsilon(x):= \frac{1}{\varepsilon^{d}} \eta(x/\varepsilon) \qquad \forall x\in\R^d.
\end{equation*}
For $v\in L^1(\T^d)$ extend $v$ periodically to $\R^d \mapsto \R$ and  define
\begin{equation*}
v\ast\eta(x):= \int_{\R^d} \eta(y) v(x-y) \dd y \qquad \text{ for all } x\in\R^d.
\end{equation*}
  \item (Noise of SPDE) Let $\{(W^\iota_t)_{t\geq 0}, \iota \in \N_{\geq 1}\}$ denote a family of independent 1-dimensional standard Brownian motions.
  \item (Coefficients of SPDE: $g_\iota$, $\g_\iota$) Let $g_\iota \in C^{k+3}(\R\times\T^d, \R^d)$ for all $\iota\in \N_{>0}$. Assume that
  \begin{equation*}
 \sumi \|g_\iota\|^2_{C^{k+3}([-K,K]\times\T^d, \R^d)}<\infty
 \qquad \forall K>0
  \end{equation*}
  and
  \begin{equation*}
\Div(g_\iota(u,x))= 0 \qquad \forall u\in \R, \forall x\in \T^d, \forall \iota\geq 1 
  \end{equation*}
  where the divergence is w.r.t. the $x$ variable. Define $\g_\iota(u, x):= ug_\iota(u,x)$, $g_\iota'(u,x) := \partial_u g_\iota(u,x)$ and $\g_\iota'(u,x) := \partial_u \g_\iota(u,x)$ for all $u\in\R, x\in\T^d$. We frequently drop the $x$ in the notation e.g. $g_\iota(u):= g_\iota(u,x)$.
  
\item (Cut-off function for tamed SPDE) For every $R>0$ let $\gamma_{R}\in C^\infty([0,\infty),\R)$ be such that $\gamma_R|_{[0,R]} \equiv 1$ and $\gamma_R|_{[R+1,\infty)} \equiv 0$ and $\gamma$ is monotone non-increasing. In addition, assume that $u\mapsto \sqrt{\gamma_{R}(u)}$ is Lipschitz continuous. For $R=\infty$ define $\gamma_{\infty}\in C^\infty([0,\infty),\R)$ by $\gamma_\infty \equiv 1$.
\end{enumerate}
\end{assumption}
For $\varepsilon\in(0,1)$, consider the following formal nonlocal-in-space
SPDE:
\begin{equation}\label{eq:SPDEnonlocal}\tag{$NL_\varepsilon$}
\begin{aligned}
\dd \ue & = -  \sum_{\iota \geq 1} \Div(\ue g_{\iota}(\ue\falt,x)) \circ \dd W^\iota_t  +  \Delta \ue \dd t, \qquad  u^\varepsilon(0) = u_0.
\end{aligned}
\end{equation}
We study the limit as $\varepsilon\to0$, in which the interaction length
vanishes and the equation becomes local-in-space:
\begin{equation}\label{eq:SPDElocal}\tag{$L_0$}
\begin{aligned}
\dd u = -  \sum_{\iota \geq 1} \Div(ug_{\iota}(u,x)) \circ \dd W^\iota_t  + \Delta u \dd t, \qquad  u(0) = u_0. 
\end{aligned}
\end{equation}
The Stratonovich notation in \eqref{eq:SPDEnonlocal} and \eqref{eq:SPDElocal}
is formal: throughout, we interpret both equations in the Itô variational
sense. In particular, the Itô drift includes the
Stratonovich-to-Itô correction, which is incorporated in the coefficients
defined below, see \cref{def:solHk}.

The well-posedness of \eqref{eq:SPDElocal} and \eqref{eq:SPDEnonlocal} has been studied in several regularity classes. For the local equation \eqref{eq:SPDElocal}, global-in-time well-posedness at low spatial regularity has been established, for example, in \cite[Theorem 2.7]{DareiotisGess} and \cite[Theorem 1.6]{FehrmanGess2019WP}. Local-in-time well-posedness at higher spatial regularity is covered by the general theory in \cite[Theorem 6.4]{AgrestiVeraar2022-Part1}. For global-in-time well-posedness of the nonlocal equation \eqref{eq:SPDEnonlocal} at low spatial regularity, see, for example, \cite[Theorem 3.5]{KurtzXiong1999} and \cite[Theorems 5.3 and 5.4]{CoghiGess}.

Our analysis requires solutions with spatial regularity at least $H^k(\T^d)$, $k= \lceil d/2 + 3/2\rceil$. We construct maximal solutions in this class for both equations using the variational framework of Liu and Röckner. Alternatively, local well-posedness in this setting could also be obtained by the critical variational framework of \cite[Theorem 3.3]{AgrestiVeraar2024}, with $\varepsilon>0$ fixed in the nonlocal case.  The notion of maximal solution introduced below is essentially consistent with \cite[Definition 3.2]{AgrestiVeraar2024}.

Let $(W_t)_{t\geq 0}$ denote a cylindrical Wiener process in a separable Hilbert space $U$ on the underlying filtered probability space.  Let $(\e_\iota)_{\iota \geq 1}$ denote an orthonormal basis of $U$ and let $V\subset H \subset V^*$ denote a Gelfand triple, i.e. assume that $H$ is a separable Hilbert space, $V$ a reflexive Banach space and $V\subset H$ continuously and densely. Consider for  progressively measurable coefficients $A\colon [0,\infty)\times V \times \Omega \to V^*$, $B\colon [0,\infty)\times V  \times \Omega \to  \mathcal{L}_2(U,H)$ the equation
\begin{equation}\label{SPDE-LiuRoeckner1}
dX(t) = A(t,X(t))\dd t + B(t,X(t)) \dd W(t).
\end{equation}
We use the definition \cite[Definition 5.1.2]{LiuRoeckner} of a variational solution of \eqref{SPDE-LiuRoeckner1} on a deterministic bounded time interval $[0,T]$ and adapt it in the straightforward way to cover also local solutions: We say $(X, \sigma)$ is a \textit{local-in-time solution} of \eqref{SPDE-LiuRoeckner1} if there exists a non-decreasing sequence of $[0,\infty)$-valued stopping times $\sigma_n$ such that $\lim_{n\to\infty} \sigma_n = \sigma$ satisfying $\{\sigma_n<\sigma\}$ whenever $\{\sigma\not=0\}$ and $(X(t))_{t\in[0,\sigma)\cup\{0\}}$ is a continuous $H$-valued $(\F_t)$-adapted process satisfying
\begin{equation*}
 \hat{X}\one_{[0,\sigma_n]}\in L^\alpha([0,T]\times\Omega, \dd t\otimes\PP;V)\cap L^2([0,T]\times\Omega, \dd t \otimes \PP;H) \qquad \forall n\in\N,
\end{equation*}
where $\hat{X}$ defines the $dt \otimes \dd \PP$ equivalence class, and $\PP$-a.s.
\begin{equation*}
X(t\wedge\sigma_n) = X(0) + \int_0^{t\wedge\sigma_n} A(t,\bar X(s)) \dd s +
\int_0^{t\wedge\sigma_n}B(s,\bar X(s)) \dd W(s), t\in[0,T], \forall n\in\N,
\end{equation*}
where $\bar{X}$ is any $V$-valued progressively measurable $\dd t\otimes\dd \PP$ version of $\hat{X}$.

We say that local-in-time solutions are unique if for any two local-in-time solutions $(X^1, \sigma_1)$ and $(X^2, \sigma_2)$ with $X^1(0) = X^2(0)$ we have $X^1=X^2$ on $[0,\sigma_1 \wedge \sigma_2)$ $\PP$-almost surely.
We call a local-in-time solution $(X,\sigma)$ the \textit{unique maximal solution} of \eqref{SPDE-LiuRoeckner1} if, for any other local-in-time solution $(X^2, \sigma_2)$ of \eqref{SPDE-LiuRoeckner1}  with
the same initial condition, we have $\sigma^2 \leq \sigma$ and $X=X^2$ on $[0,\sigma^2)$ $\PP$-almost surely.

\begin{defi} \label{def:solHk} Assume \cref{assumption}. Consider the Gelfand triple $V:= H^{k+1}(\T^d)\subset  H:= H^k(\T^d) \subset H^{k+1}(\T^d)^*$ for $k$ as given in \cref{assumption} and $U=\ell^2$. 
\begin{enumerate}
\item We say $(u^\varepsilon,\sigma)$ is a variational solution / local-in-time solution / unique maximal solution of \eqref{eq:SPDEnonlocal} if for the coefficients
\begin{equation}
\begin{aligned}
\VV{A(u),\varphi} & := - \frac{1}{2}\HHH{ \sumi  g_\iota(u\falt) \Div(u g_\iota(u\falt))), \grad \varphi} \\
&\quad  - \frac{1}{2} \HHH{\sumi  u g_\iota'(u\falt) \Div(u g_\iota(u\falt))\falt), \grad \varphi}
 -  \HHH{\grad u, \grad \varphi}  \\
B(u)\psi & :=  -\sumi \Div(u g_\iota(u\falt)) \langle e_\iota, \psi\rangle_{\ell^2} 
\end{aligned}
\end{equation}
$(u^\varepsilon,\sigma)$ is a variational solution / local-in-time solution / unique maximal solution to \eqref{SPDE-LiuRoeckner1}.

\item We say $(u,\sigma)$ is a variational solution / local-in-time solution / unique maximal solution of \eqref{eq:SPDElocal} if for the coefficients
\begin{equation}
\begin{aligned}
\VV{A(u),\varphi} &:= -\frac{1}{2}\HHH{\sumi \g_\iota'(u)\Div(\g_\iota(u)), \grad \varphi}  -  \HHH{\grad u, \grad \varphi} \\
B(u)\psi &:= -\sumi  \Div(\g_\iota(u))\langle e_\iota, \psi\rangle_{\ell^2}
\end{aligned}
\end{equation}
$(u,\sigma)$ is a variational solution / local-in-time solution / unique maximal solution to \eqref{SPDE-LiuRoeckner1}.
\end{enumerate} 
\end{defi}

Before stating the main result, we briefly comment on the well-posedness properties of the SPDEs \eqref{eq:SPDElocal} and \eqref{eq:SPDEnonlocal} that will be used below and are established in \cref{sec:4}.
For the nonlocal-in-space equation \eqref{eq:SPDEnonlocal} with non-negative initial data, there exists a unique global-in-time solution. For the local-in-space equation \eqref{eq:SPDElocal}, we cannot establish under our general assumptions, without assuming additional smallness and decay assumptions on $g_\iota$,
that global-in-time solutions with $H^k(\T^d)$ regularity exist for all non-negative $u_0\in H^k(\T^d)$. However, we can establish that there exists a unique maximal solution $(u,\sigma)$ to \eqref{eq:SPDElocal} with paths in $C([0,\sigma),H^k(\T^d))$.

In addition, we characterize what it means for the paths of solutions to \eqref{eq:SPDElocal} to have 
$H^k(\T^d)$ regularity: We show a blow up criterion, that can informally be seen as stating the following. Assume that a solution $u(t)$ (of some suitable type) satisfies up to some stopping time $\tau$
\[\int_0^\tau \|u(t)\|_{W^{1,\infty}}^2 \mathrm dt<\infty \quad \PP \text{-a.s.} \qquad \text{or} \qquad 
\sup_{t\leq \tau} \|u(t)\|_{C^{\alpha}(\T^d)} <\infty \quad \PP \text{-a.s. for some } \alpha>0
 \]
then
\[
\sup_{t\leq \tau} \|u(t)\|_{H^k(\T^d)}<\infty \qquad \PP \text{-almost surely}
\]
hence preventing blow-up of the maximal solution before time $\tau$. In particular, this allows us to control the high Sobolev regularity required in the main result through the time-integrated $W^{1,\infty}$-norm rather than an a priori bound on the full $H^k$-norm. These statements are established in \cref{thm:local-in-spaceEquationWP,thm:nonlocal-in-spaceEquationWP} below.

\begin{theorem}[Main theorem]\label{thm:main}
Assume \cref{assumption}, $k$ as given by \cref{assumption},\\ $u_0\in L^2(\Omega, \F_0, \PP; H^{k+1}(\T^d))$, $u_0 \geq 0$ and $T>0$. Let $(u,\sigma)$ denote the unique maximal solution of \eqref{eq:SPDElocal} with localizing sequence $(\tilde{\sigma}_n)_{n\in\N}$ and $u(0)=u_0$. Moreover, for every $\varepsilon\in(0,1)$, let $(u^\varepsilon(t))_{t\in[0,\infty)}$ denote the unique global maximal solution of \eqref{eq:SPDEnonlocal} with $u^\varepsilon(0)=u_0$. Then, we have for every $m\in\N$ that
\begin{equation*}
 \lim_{\varepsilon\to 0} \sup_{t\in[0,\tilde{\sigma}_m \wedge T]} \|u^\varepsilon(t) - u(t)\|_{H^k(\T^d)} = 0  \qquad \text{ in probability.}
\end{equation*}
\end{theorem}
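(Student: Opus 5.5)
The plan is to reduce the statement to a convergence result for tamed equations and then remove the taming using the localizing sequence. For $R>0$ consider the tamed nonlocal and local SPDEs obtained from \eqref{eq:SPDEnonlocal} and \eqref{eq:SPDElocal} by multiplying the noise coefficient $B$ by $\sqrt{\gamma_R(\|u\|_{H^k})}$ and the Stratonovich correction by $\gamma_R(\|u\|_{H^k})$. This matches the Lipschitz assumption on $\sqrt{\gamma_R}$ in \cref{assumption}. The cutoff makes the coefficients globally controlled on $H^k$, so both tamed equations have global-in-time solutions $u^{\varepsilon,R}$ and $u^R$ in the Gelfand triple $H^{k+1}\subset H^k\subset (H^{k+1})^*$. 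By uniqueness of maximal solutions, $u^{\varepsilon,R}=u^\varepsilon$ up to the first time $\|u^{\varepsilon,R}\|_{H^k}\ge R$, and likewise for $u^R$ and $u$. The localizing times $\tilde\sigma_m$ satisfy $\sup_{t\le\tilde\sigma_m}\|u(t)\|_{H^k}\le m$, so I choose $R=m+1$.

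The core step is an $\varepsilon$-uniform estimate for the tamed nonlocal equation. I apply Itô's formula to $\|\partial^\beta u^{\varepsilon,R}\|_{L^2}^2$ for $|\beta|\le k$, with the $H^{k+1}$ initial datum allowing the computation in $V$. The key cancellation is the one that must come from the Stratonovich structure. The top-order terms in the Itô correction, of the form $\langle g_\iota\,\Div(u g_\iota)+u g'_\iota(\Div(u g_\iota)\ast\eta_\varepsilon),\nabla\partial^\beta\partial^\beta u\rangle$, should cancel against the quadratic variation $\sum_\iota\|\partial^\beta\Div(ug_\iota(u\ast\eta_\varepsilon))\|^2$ up to commutators. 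Two facts make the commutators bounded uniformly in $\varepsilon$: $\Div g_\iota=0$, and the symmetry of $\eta$, which lets the convolution move to the other factor. What remains is a term of order $C(R)\|u\|_{H^k}^2$ plus $\delta\|u\|_{H^{k+1}}^2$, and the latter is absorbed by the Laplacian. Because $\gamma_R$ vanishes beyond $R+1$, the resulting bound $\E\sup_{t\le T}\|u^{\varepsilon,R}\|_{H^k}^2+\E\int_0^T\|u^{\varepsilon,R}\|_{H^{k+1}}^2\le C(R,T)$ is uniform in $\varepsilon$. A similar estimate for time increments in a negative Sobolev norm gives tightness of $\{u^{\varepsilon,R}\}_\varepsilon$ in $C([0,T];H^{k-\delta})\cap L^2(0,T;H^k)$.

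Next I pass to the limit along subsequences. Prokhorov's theorem and the Skorokhod representation theorem give a.s. convergence on a new probability space. The convergence $u\ast\eta_\varepsilon\to u$ in $H^{k-\delta}$, together with the embedding $H^{k-\delta}\subset W^{1,\infty}$ (valid since $k>d/2+1$), lets me identify the limit as a martingale solution of the tamed local equation. Pathwise uniqueness of the tamed local equation and the Gyöngy–Krylov lemma then upgrade this to convergence in probability of the whole family $u^{\varepsilon,R}\to u^R$ on the original space, first in $C([0,T];H^{k-\delta})$. To get the $H^k$ norm in the sup, I either run the energy estimate on the difference $u^{\varepsilon,R}-u^R$ in $H^k$, controlling the error terms through the lower-norm convergence and $\|u\ast\eta_\varepsilon-u\|$ bounds using the $H^{k+1}$ integrability, or use a weak-convergence-plus-norm-convergence argument via the Itô formula for the norms.

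Finally I remove the cutoff. On the event $\{\sup_{t\le\tilde\sigma_m\wedge T}\|u^{\varepsilon,R}-u^R\|_{H^k}<1\}$, whose probability tends to one, the bound $\|u^R\|_{H^k}\le m$ on $[0,\tilde\sigma_m]$ forces $\|u^{\varepsilon,R}\|_{H^k}<R$ there. Hence on this event $u^{\varepsilon,R}=u^\varepsilon$ and $u^R=u$ on $[0,\tilde\sigma_m\wedge T]$, which gives the claim. I expect the main obstacle to be the uniform $H^k$ estimate of the second paragraph: the nonlocal correction term $u g'_\iota(\Div(ug_\iota)\ast\eta_\varepsilon)$ does not cancel exactly with the martingale quadratic variation, and its top-order commutators must be shown to be $\varepsilon$-uniformly bounded using the symmetry of $\eta$ and the divergence-free structure.
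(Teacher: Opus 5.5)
Your architecture (taming, an $\varepsilon$-uniform energy bound, Skorokhod plus Gy\"{o}ngy--Krylov against the pathwise unique tamed local equation, removal of the cutoff) is the paper's. The commutator mechanism you anticipate is also the one used there: $|(vw)\ast\partial_i\eta_\varepsilon-(v\ast\partial_i\eta_\varepsilon)w|\lesssim\varepsilon\|Dw\|_{L^\infty}\,|v|\ast|\partial_i\eta_\varepsilon|$, with $\varepsilon\|\partial_i\eta_\varepsilon\|_{L^1}=\|\partial_i\eta\|_{L^1}$. The real difference is that you run the uniform estimate one derivative lower. The paper bounds $\sup_t\|u^{\varepsilon,R}\|_{H^{k+1}}^2+\int_0^T\|u^{\varepsilon,R}\|_{H^{k+2}}^2\,dt$ uniformly in $\varepsilon$ (\cref{lemma:H^k-uniform-in-eps}). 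That bound, not the justification of It\^o's formula, is what the hypothesis $u_0\in H^{k+1}$ is for. It gives tightness directly in $C([0,T];H^k)\times L^2(0,T;H^{k+1})$, so Gy\"{o}ngy--Krylov already delivers the $H^k$-sup convergence. At your level the sup bound is free anyway, since the cutoff controls $\|u^{\varepsilon,R}(t)\|_{H^k}$ by a constant depending on $R$ and $\|u_0\|_{H^k}$. You only reach $C([0,T];H^{k-\delta})$, so everything rests on the upgrade to $H^k$.

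Your norm-convergence option is not straightforward. Uniform convergence of $\|u^{\varepsilon,R}(t)\|_{H^k}^2$ via It\^o's formula needs the limit of $\int_0^t\|\nabla u^{\varepsilon,R}\|_{H^k}^2\,ds$, which weak convergence in $L^2(0,T;H^{k+1})$ does not give.

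The difference estimate does work and should be your main argument. Write $B_\varepsilon,B_0$ for the tamed nonlocal and local noise coefficients and $w:=u^{\varepsilon,R}-u^R$. Split $B_\varepsilon(u^{\varepsilon,R})-B_0(u^R)=[B_\varepsilon(u^{\varepsilon,R})-B_\varepsilon(u^R)]+[B_\varepsilon(u^R)-B_0(u^R)]$, and likewise for the drift.
\begin{itemize}
\item The first bracket is handled by the same top-order cancellation and commutator bound. It leaves a small multiple of $\|w\|_{H^{k+1}}^2$ plus $C_R(1+\|u^R\|_{H^{k+1}}^2+\|u^{\varepsilon,R}\|_{H^{k+1}}^2)\|w\|_{H^k}^2$.
\item The consistency error is controlled by terms such as $\int_0^T\|u^R\ast\eta_\varepsilon-u^R\|_{H^{k+1}}^2\,dt$. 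These vanish by dominated convergence because $u^R\in L^2(0,T;H^{k+1})$.
\item The split must be made around $u^R$: for $u^{\varepsilon,R}$ you have no uniform rate.
\end{itemize}
With \cref{lemma:StochGron} and your uniform $L^2(0,T;H^{k+1})$ bound, this gives $\sup_{t\le T}\|w(t)\|_{H^k}\to0$ in probability directly. That makes the compactness step superfluous and appears to need only $u_0\in H^k$. The price is a mixed nonlocal/local stability computation, which the paper avoids because it only needs uniqueness for the tamed local equation.

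Finally, an arbitrary localizing sequence need not satisfy $\sup_{t\le\tilde\sigma_m}\|u(t)\|_{H^k}\le m$, so the choice $R=m+1$ is unjustified. As in the paper, use instead that $\sup_{t\le\tilde\sigma_m\wedge T}\|u(t)\|_{H^k}<\infty$ a.s. Then, for a given $\kappa>0$, choose $R$ with $\PP[\sup_{t\le\tilde\sigma_m\wedge T}\|u(t)\|_{H^k}\ge R-1]\le\kappa$. Your cutoff-removal argument then works on the complementary event.
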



\section{Tamed SPDEs}
\label{sec:3}

In this section we introduce tamed versions of the SPDEs  \eqref{eq:SPDEnonlocal} and \eqref{eq:SPDElocal}. We apply the tamed versions of the SPDEs to avoid technical complications in the proof of
 \cref{thm:main} in \cref{sec:5,sec:6}, in which we would like to stop the processes $(\ue(t))_{t\in[0,\infty)}$  at the time $$\tau_{\varepsilon} := \inf\{t>0 \mid \|\ue(t)\|_{H^k(\T^d)} > R\},$$ where $R>0$ is a fixed parameter. By using the tamed SPDEs we can avoid these $\varepsilon$-dependent stopping times. Moreover, we also use them in Section 4 to prove the existence of unique maximal solutions.

\subsection{Tamed SPDEs: Definition}
Let $\gamma_R$ be as defined in \cref{assumption}. We tame the nonlocal-in-space SPDE \eqref{eq:SPDEnonlocal} as follows:
\begin{equation}\label{eq:approx-cut-nonlocal}\tag{$tNL_\varepsilon$}
\begin{aligned}
\dd \uer &  = \frac{1}{2}  \sum_{\iota \geq 1} \gamma^2_R( \|\uer\|_{H^{k}}) \Div(g_{\iota} \Div(\uer g_{\iota}))\dd t \\
& \quad  + \frac{1}{2}   \sum_{\iota \geq 1} \gamma^2_R( \|\uer \|_{H^{k}}) \Div(\uer g'_{\iota} \Div(\uer g_{\iota})\ast\eta_\varepsilon)\dd t \\
& \quad  +  \Delta \uer \dd t -  \sum_{\iota \geq 1} \gamma_R(\|\uer \|_{H^{k}})  \Div(\uer g_{\iota}) \dd W^\iota_t
\end{aligned}
\end{equation}
where we shorten the notation by $g_\iota:= g_\iota(\uer \ast\eta_\varepsilon, x)$  and $g'_\iota:= g'_\iota(\uer \ast\eta_\varepsilon, x)$. Likewise, we tame the local-in-space SPDE \eqref{eq:SPDElocal} as follows
\begin{equation}\label{eq:approx-cut-local}\tag{$tL_0$}
\begin{aligned}
\dd \ur = \frac{1}{2}  \sum_{\iota \geq 1} \gamma^2_R( \|\ur\|_{H^{k}}) \Div(\g_{\iota}' \Div(\g_{\iota}))\dd t + \Delta \ur \dd t -  \sum_{\iota \geq 1}  \gamma_R(\|\ur\|_{H^{k}})  \Div(\g_{\iota}) \dd W^\iota_t,
\end{aligned}
\end{equation}
where we shorten the notation by $\g_\iota:= \ur g_\iota(\ur, x)$ and $\g_\iota':= g_\iota(\ur, x) + \ur g_\iota'(\ur, x)$. \\

\begin{defi}\label{def:approx-cut-sol} Assume \cref{assumption}. Consider the Gelfand triple $V:= H^{k+1}(\T^d)\subset  H:= H^k(\T^d) \subset H^{k+1}(\T^d)^*$ for $k$ as given in \cref{assumption}. 
\begin{enumerate}
\item We say $\uer$ is a variational solution / local-in-time solution / unique maximal solution of \eqref{eq:approx-cut-nonlocal} if for the coefficients  
\begin{equation}
\begin{aligned}
\VV{A(u),\varphi} & := - \frac{1}{2} \HHH{ \sumi  \gamma^2_R( \|u\|_{H^{k}})  g_\iota(u\falt) \Div(u g_\iota(u\falt))), \grad \varphi} \\
&\quad  - \frac{1}{2} \HHH{\sumi  \gamma^2_R( \|u\|_{H^{k}}) u g_\iota'(u\falt) \Div(u g_\iota(u\falt))\falt), \grad \varphi}
 -  \HHH{\grad u, \grad \varphi}  \\
B(u)\psi & :=  -\sumi \gamma_R( \|u\|_{H^{k}}) \Div(u g_\iota(u\falt)) \langle e_\iota, \psi\rangle_{\ell^2} 
\end{aligned}
\end{equation}
$\uer$ is a variational solution / local-in-time solution / unique maximal solution to \eqref{SPDE-LiuRoeckner1}.

\item We say $\ur$ is a variational solution / local-in-time solution / unique maximal solution of \eqref{eq:approx-cut-local} if for the coefficients  
\begin{equation}
\begin{aligned}
\VV{A(u),\varphi} &:= -\frac{1}{2}\HHH{\sumi \gamma^2_R( \|u\|_{H^{k}}) \g_\iota'(u)\Div(\g_\iota(u)), \grad \varphi}  -  \HHH{\grad u, \grad \varphi} \\
B(u)\psi &:= -\sumi \gamma_R( \|u\|_{H^{k}}) \Div(\g_\iota(u))\langle e_\iota, \psi\rangle_{\ell^2}
\end{aligned}
\end{equation}
$\ur$ is a variational solution / local-in-time solution / unique maximal solution to \eqref{SPDE-LiuRoeckner1}.
\end{enumerate} 
\end{defi}

\subsection{SPDEs with locally monotone coefficients}

There are different options to establish the well-posedness theory for for our equations  \eqref{eq:SPDEnonlocal}, \eqref{eq:approx-cut-nonlocal},  \eqref{eq:SPDElocal} and  \eqref{eq:approx-cut-local}. One option is the critical variational setting, see for example Agresti and Veraar \cite[Theorem 3.3, Theorem 3.4, Theorem 3.5]{AgrestiVeraar2024}. Another option is to apply the framework of Liu and Röckner, see \cite{LiuRoeckner}. In this paper we apply the latter.

To establish well-posedness for the tamed SPDEs, we observe that neither  \cite[Theorem 4.2.4]{LiuRoeckner}  nor \cite[Theorem 5.1.3]{LiuRoeckner} applies directly. More precisely, (H2) of Theorem 4.2.4 is not met as our equation is only locally monotone. On the other hand, the noise term contains a gradient, so the assumption 
\begin{equation}\label{eq:noiseAssumption}
\|B(t,v)\|_{L_2(U,H)}^2 \leq C(f(t) + \|v\|_H^2)
\end{equation}
 of Theorem 5.1.3 is not satisfied. However, our tamed SPDEs satisfy the stronger growth condition (H4) of Theorem 4.2.4, which  ensures well-posedness by a minor modification of the original proof of Theorem 5.1.3. The following corollary states the mixture of \cite[Theorem 4.2.4]{LiuRoeckner} and \cite[Theorem 5.1.3]{LiuRoeckner} our tamed SPDEs require. Gradient-dependent noise is also treated in \cite[Theorem 3.2]{RocknerShangZhang2024} and \cite{RocknerShangZhang2026Correction}, under a smallness condition not imposed here.

\begin{corollary}[] \label{cor:LiuRoeckner}
Suppose there exist constants $\alpha \in ]1,\infty[$, $\theta \in ]0,\infty[$, $C_0 \in \R$ and a non-negative adapted process $f\in L^1([0,T]\times\Omega, \dd t \otimes \PP)$ such that the following conditions hold for all $u, v, w \in V$ and $(t,\omega) \in [0,T]\times \Omega$:
\begin{enumerate}
 \item[(H1)] (Hemicontinuity) The map $\lambda \mapsto \leftindex_{V^*}{\langle}A(t,u+\lambda v), w\rangle_V$ is continuous on $\R$.
 \item[(H2')] (Local monotonicity)
 \begin{equation*}
\begin{aligned}
& 2 \leftindex_{V^*}{\langle} A(t,u) - A(t,v), u-v\rangle_V + \|B(t,u)-B(t,v)\|^2_{\LUH} \leq (f(t)  + \rho(u))||u-v||_H^2
\end{aligned}
\end{equation*}
 where $\rho:V\to[0,\infty[$ is a measurable hemicontinuous function and locally bounded in $V$.
 \item[(H3)] (Coercivity)
 \begin{equation*}
 2 \leftindex_{V^*}{\langle} A(t,v),v\rangle_V + \|B(t,v)\|^2_{\LUH} \leq C_0 \|v\|_H^2 - \theta \|v\|_V^\alpha + f(t).
\end{equation*}
 \item[(H4)] (Growth)
  \begin{equation*}
 \|A(t,v)\|_{V^*}^{\alpha/(\alpha - 1)} \leq f(t) + C_0 \|v\|^\alpha_V.
\end{equation*}
\end{enumerate}
Moreover, assume there exists a constant C such that
\begin{equation}\label{eq:LR-samecondition}
 \rho(v)\leq C(1+\|v\|_V^\alpha) \qquad \forall v\in V.
\end{equation}

Then for any $X_0\in L^2(\Omega, \F_0, \PP; H)$, \eqref{SPDE-LiuRoeckner1} has a unique variational solution $(X(t))_{t\in[0,T]}$ (in the sense of \cite[Definition 5.1.2]{LiuRoeckner}) such that $X(0)=X_0$ and it satisfies
$\E[\sup_{t\in[0,T]}\|X(t)\|_H^2] < \infty$. Moreover, local-in-time solutions are also unique.
\end{corollary}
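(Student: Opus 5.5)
The plan is to rerun the proof of \cite[Theorem 5.1.3]{LiuRoeckner}, which proves exactly this statement under (H1), (H2'), (H3), the bound \eqref{eq:LR-samecondition}, and the additional noise growth assumption \eqref{eq:noiseAssumption}. I would locate every place where \eqref{eq:noiseAssumption} enters and replace that use by the stronger growth condition (H4), following \cite[Theorem 4.2.4]{LiuRoeckner}.

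\textbf{Galerkin scheme.} Fix an orthonormal basis $\{e_i\}\subset V$ of $H$, let $P_n$ be the associated projections on $V^*$, and take the cylindrical noise truncated to $n$ modes. The finite-dimensional SDEs for $X_n$ have locally Lipschitz-type (locally monotone, hemicontinuous) coefficients. By (H3) they satisfy a one-sided linear growth bound, so they have unique global strong solutions.

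\textbf{A priori estimates.} Applying Itô's formula to $\|X_n\|_H^2$ and using (H3) gives
\[
\sup_n\Big(\E\sup_{t\le T}\|X_n(t)\|_H^2+\E\int_0^T\|X_n\|_V^\alpha\,\dd t\Big)<\infty .
\]
The supremum inside the expectation is handled with the Burkholder–Davis–Gundy inequality. This is where \eqref{eq:noiseAssumption} was used in Theorem 5.1.3, namely to bound $\E\big(\int_0^T\|X_n\|_H^2\|B(X_n)\|_{\LUH}^2\,\dd t\big)^{1/2}$.

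I would replace it as follows. Condition (H3) gives
\[
\|B(v)\|_{\LUH}^2\le C_0\|v\|_H^2+f-\theta\|v\|_V^\alpha-2\langle A(v),v\rangle .
\]
Combining this with (H4), $|\langle A(v),v\rangle|\le\|A(v)\|_{V^*}\|v\|_V$, and Young's inequality yields
\[
\|B(v)\|_{\LUH}^2\le C\big(f+\|v\|_H^2+\|v\|_V^\alpha\big).
\]
Hence
\[
\E\Big(\int_0^T\|X_n\|_H^2\|B(X_n)\|_{\LUH}^2\,\dd t\Big)^{1/2}\le \delta\,\E\sup_{t\le T}\|X_n\|_H^2+C_\delta\,\E\int_0^T\big(f+\|X_n\|_H^2+\|X_n\|_V^\alpha\big)\dd t,
\]
and the last integral is already controlled by the plain (non-supremum) energy estimate. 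The same bound shows that $B(X_n)$ is bounded in $L^2([0,T]\times\Omega;\LUH)$. Condition (H4) also bounds $A(X_n)$ uniformly in $L^{\alpha/(\alpha-1)}([0,T]\times\Omega;V^*)$.

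\textbf{Identification of the limit.} Extract weak limits $X_n\rightharpoonup \bar X$, $A(X_n)\rightharpoonup Y$, $B(X_n)\rightharpoonup Z$ and define $X$ through $Y$, $Z$. Then identify $Y=A(X)$ and $Z=B(X)$ by the localized monotonicity trick of Theorem 5.1.3. This uses the weight $e^{-\int_0^t (f+\rho(\phi))\,\dd s}$ for test processes $\phi\in L^\alpha(V)\cap L^\infty(H)$, with $\rho(\phi)$ integrable by \eqref{eq:LR-samecondition}, followed by hemicontinuity (H1). This step is unchanged, since it only uses (H1), (H2'), \eqref{eq:LR-samecondition} and the bounds already obtained.

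\textbf{Uniqueness.} Uniqueness, including that of local-in-time solutions, follows from Itô's formula for $e^{-\int_0^{t}(f+\rho(X^1))\,\dd s}\|X^1(t)-X^2(t)\|_H^2$ stopped at $\sigma_n^1\wedge\sigma_n^2\wedge \tau$. Here $\tau$ is a localizing time making the exponential weight and the stochastic integral well behaved, and (H2') gives a nonpositive drift. Since $\int_0^\cdot\rho(X^1)\,\dd s<\infty$ a.s. by \eqref{eq:LR-samecondition}, letting $n\to\infty$ gives $X^1=X^2$ on $[0,\sigma_1\wedge\sigma_2)$.

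\textbf{Main obstacle.} I expect the main obstacle to be the BDG estimate for the supremum in the a priori bounds. The point to check carefully is that no other step of Theorem 5.1.3 silently uses \eqref{eq:noiseAssumption}, in particular the convergence of the Itô correction terms when identifying $Z=B(X)$. I would handle these with the derived bound on $\|B(v)\|_{\LUH}^2$ in terms of $f+\|v\|_H^2+\|v\|_V^\alpha$.
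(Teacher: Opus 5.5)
Your proposal is correct and takes essentially the paper's route: the paper also reruns \cite[Theorem 5.1.3]{LiuRoeckner}, notes that \eqref{eq:noiseAssumption} enters only through the supremum bound (5.4) of \cite[Lemma 5.1.5]{LiuRoeckner}, and handles that bound with BDG plus exactly your estimate $\|B(t,v)\|_{\LUH}^2\le C_0\|v\|_H^2+(1+2\tfrac{\alpha-1}{\alpha})f(t)+(\tfrac{2}{\alpha}+2C_0\tfrac{\alpha-1}{\alpha}-\theta)\|v\|_V^\alpha$, obtained from (H3), (H4) and Young's inequality, while uniqueness is a minor modification of the original argument. The one detail to make explicit, as the paper does, is to first stop the Galerkin solutions at a localizing sequence $\tau_R^{(n)}$, so that $\delta\,\E\sup_t\|X_n(t)\|_H^2$ is finite and can be absorbed, and then let $R\to\infty$.
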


\begin{proof}[Proof of \cref{cor:LiuRoeckner}]  
In this proof we use the notation of \cite[Theorem 5.1.3]{LiuRoeckner}.

The uniqueness of local-in-time solutions follows from obvious minor modifications of the uniqueness proof of \cite[Theorem 5.1.3]{LiuRoeckner}.

The existence of a unique variational solution follows from the proof of \cite[Theorem 5.1.3]{LiuRoeckner} (for $p=2$) with one minor modification: The assumption \eqref{eq:noiseAssumption} was only used in the proof of (5.4), \cite[Lemma 5.1.5.]{LiuRoeckner}. Hence, it suffices to show for the Galerkin approximations $X^{(n)}$ that there exists a $C>0$ such that for all $n\in\N$ we have
\begin{equation}\label{eq:cor32SufficesToShow}
\E\left[ \sup_{t\in[0,T]}  \|X^{(n)}(t)\|_H^2 \right]
+ \E\left[\int_0^T \|X^{(n)}(t)\|_V^\alpha \dd t\right] \leq 
C\left( \E[\|X^{(n)}(0)\|_H^2] +\E\left[\int_0^T f(t) \dd t\right]\right).
\end{equation}
To this end, let $\tau_R^{(n)}$, $R\in\N$ denote a localizing sequence for $X^{(n)}$. By Ito's formula for $\R^n$-valued semimartingales, (H3), discarding the negative term $- \theta \|X^{(n)}(s)\|_V^\alpha$, the Burkholder-Davis-Gundy inequality  and Young's inequality we have for any $\varepsilon>0$
\begin{equation*}
\begin{aligned}
 \E\left[ \sup_{s\in[0,\trn]} \|X^{(n)}(s)\|_H^2 \right]
 & \leq \E[\|X^{(n)}(0)\|_H^2] + \E\left[\int_0^\trn ( C_0 \|X^{(n)}(s)\|_H^2  + |f(s)|) \dd s\right]  \\
&\quad + 3\varepsilon \E\left[ \sup_{s\in[0,\trn]} \|X^{(n)}(s)\|_H^2 \right] + 3  \varepsilon^{-1} \E\left[\int_0^\trn \|B(s,X^{(n)}(s))\|^2_{\LUH}   \dd s\right]. \\
\end{aligned}
\end{equation*}
Using that by (H3) and (H4) we have
\begin{equation*}
\begin{aligned}
\|B(t,v)\|_{\LUH}^2  
& \leq 
C_0 \|v\|^2_H    
+ \left(1 + 2 \frac{\alpha-1}{\alpha} \right) f(t) 
+ \left(\frac{2}{\alpha} +  2C_0 \frac{\alpha-1}{\alpha} - \theta\right)   \|v\|_V^\alpha,
\end{aligned}
\end{equation*}
noting that  $\E\left[\int_0^T \|X^{(n)}(t)\|_V^\alpha \dd t\right]$ is bounded due to (H3), choosing $\varepsilon>0$ sufficiently small, rearranging the terms, applying the deterministic Gronwall inequality and taking the limit $R\to\infty$ implies \eqref{eq:cor32SufficesToShow}.
\end{proof}

\subsection{Tamed SPDEs: Well-posedness}
In this subsection, we establish well-posedness of the tamed SPDEs by applying \cref{cor:LiuRoeckner}. We first derive auxiliary estimates for the coefficients of the non-local SPDE, collected in \cref{lemma:BoringEstimate}, which will be reused later in the paper. Well-posedness of the local and non-local tamed SPDEs is then obtained in \cref{thm:local-in-spaceEquationCutoffWP,thm:nonlocal-in-spaceEquationCutoffWP}, respectively.

\begin{lemma}\label{lemma:BoringEstimate}
Assume \cref{assumption}, $R\in(0,\infty]$, $\varepsilon\in(0,1)$ and $(H,V)=(H^k(\T^d),H^{k+1}(\T^d))$ or $(H,V)=$ \\ $(H^{k+1}(\T^d),H^{k+2}(\T^d))$. The coefficients $A$ and $B$ defined in  \cref{def:approx-cut-sol} 1) satisfy for all $v\in V$
 \begin{equation*}
 \begin{aligned}
 & 2 \leftindex_{V^*}{\langle} A(t,v),v\rangle_V + \|B(t,v)\|^2_{\LUH}  \\
 & \qquad \leq   C_{d,\varepsilon,1}(1+ C_g^2)  \gamma_R^2(\|v\|_{H^k}) (1+\|v\|_{L^1(\T^d)})^{2(k+2)} 
  \|v\|_H^2  + \|v\|_H^2 - \|v\|_V^2 
\end{aligned}
\end{equation*}
and
\begin{equation*}
 \|A(t,v)\|_{V^*}^2 \leq   C_{d, \varepsilon,2} \left(\sumi \|g_\iota\|_{C^{k+2}(M_\varepsilon)}^2 \gamma_R^2(\|v\|_{H^k}) (1+\|v\|_{L^1(\T^d)})^{k+2} \right)^2 \|v\|^2_V + C_d\|v\|_V^2.
\end{equation*}
Here, 
\begin{equation*}
C_g:=  \sumi \|g_\iota\|_{C^{k+2}(M_\varepsilon)}^2, \qquad  M_\varepsilon :=  [-\|v\ast\eta_{\varepsilon}\|_{L^\infty},\|v\ast\eta_{\varepsilon}\|_{L^\infty}]\times\T^d.
\end{equation*}
The constants $C_{d,\varepsilon,1}$ and $C_{d,\varepsilon,2}$ depend on $d$, $\varepsilon$ and $\eta$ and are independent of $R$; $C_d$ depends only on $d$.
\end{lemma}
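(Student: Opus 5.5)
The plan is to keep $\varepsilon>0$ fixed and treat every nonlocal factor as smooth, so that all derivatives can be moved onto $v\ast\eta_\varepsilon$ at an $\varepsilon$-dependent cost.

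\textbf{Step 1: bounds on the nonlocal coefficients.} For every multi-index $\beta$ we have
\[
\|\partial^\beta(v\ast\eta_\varepsilon)\|_{L^\infty}\le \|\partial^\beta\eta_\varepsilon\|_{L^\infty}\|v\|_{L^1}\le C_{\varepsilon,\beta}\|v\|_{L^1}.
\]
By the chain rule (Faà di Bruno), for $j\le k+1$,
\[
\|g_\iota(v\ast\eta_\varepsilon)\|_{C^{j}}+\|g'_\iota(v\ast\eta_\varepsilon)\|_{C^{j}}\le C_{\varepsilon}\|g_\iota\|_{C^{j+1}(M_\varepsilon)}(1+\|v\|_{L^1})^{j}.
\]
The range of $v\ast\eta_\varepsilon$ lies in the first component of $M_\varepsilon$, which is why the norms of $g_\iota$ are taken on $M_\varepsilon$. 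In Sobolev spaces we then use the product estimate $\|fw\|_{H^s}\le C\|f\|_{C^s}\|w\|_{H^s}$. This gives
\[
\|\Div(v g_\iota)\|_{H^{s}}\le C_\varepsilon\|g_\iota\|_{C^{k+2}(M_\varepsilon)}(1+\|v\|_{L^1})^{k+2}\|v\|_{H^{s+1}},\qquad s\in\{k,k+1\}.
\]
The same bound holds for $g_\iota\Div(vg_\iota)$ and for $v g'_\iota\,(\Div(vg_\iota)\ast\eta_\varepsilon)$ in $H^s$. Note that convolution with $\eta_\varepsilon$ is a contraction on $H^s$.

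\textbf{Step 2: the growth bound.} Write $V^*$ via the $H$-pairing, so that $\langle A(v),\varphi\rangle=-\langle F(v),\grad\varphi\rangle_H-\langle\grad v,\grad\varphi\rangle_H$. Then
\[
\|A(v)\|_{V^*}\le \|F(v)\|_H+\|v\|_V,
\]
where $F(v)=\tfrac12\gamma_R^2\sum_\iota(\dots)$. By Step 1 with $s$ equal to the $H$-order, $\|F(v)\|_H$ is controlled by $C_\varepsilon\gamma_R^2\sum_\iota\|g_\iota\|_{C^{k+2}}^2(1+\|v\|_{L^1})^{k+2}\|v\|_V$. Squaring gives the second estimate; here $\gamma_R^2\le\gamma_R\le 1$, so the stated power of $\gamma_R$ is fine.

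\textbf{Step 3: the coercivity-type bound.} The viscous term contributes
\[
-2\langle\grad v,\grad v\rangle_H=-2(\|v\|_V^2-\|v\|_H^2)
\]
if the norms are chosen as $\|v\|_{H^{s}}^2=\|v\|_{L^2}^2+\sum_{j=1}^{s}\|\grad^j v\|_{L^2}^2$, or an equivalent normalisation. This is more than enough; keeping half of it gives $\|v\|_H^2-\|v\|_V^2$ with room to spare. It remains to bound $2|\langle F(v),\grad v\rangle_H|+\|B(v)\|^2_{L_2}$ by
\[
\tfrac12\|v\|_V^2+C_\varepsilon(1+C_g^2)\gamma_R^2(1+\|v\|_{L^1})^{2(k+2)}\|v\|_H^2.
\]

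\textbf{Main obstacle.} The bound in Step 3 is the main obstacle. The naive Step 1 bound has $\|v\|_{H^{s+1}}=\|v\|_V$ on the right-hand side, and $\|B\|^2$ then has no small prefactor; only the cancellation from the Itô–Stratonovich structure prevents a loss. The plan is to isolate the top-order part. In $\Div(vg_\iota)$, which equals $g_\iota\cdot\grad v+v\,\Div(g_\iota)$ with $g_\iota=g_\iota(v\ast\eta_\varepsilon,x)$, the only term carrying $s+1$ derivatives of $v$ is $g_\iota\cdot\grad\partial^\beta v$. All other terms put derivatives on $v\ast\eta_\varepsilon$ and are bounded by $C_\varepsilon(\dots)\|v\|_H$. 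Similarly, the second drift term contains $v$ only through $\Div(vg_\iota)\ast\eta_\varepsilon$, so it is lower order; after pairing with $\grad v$ it is handled by Young's inequality.

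For the top order, fix $|\beta|=s$. The first drift term gives $-\langle g_\iota(g_\iota\cdot\grad\partial^\beta v),\grad\partial^\beta v\rangle=-\|g_\iota\cdot\grad\partial^\beta v\|^2_{L^2}$. This cancels exactly the top-order part $\|g_\iota\cdot\grad\partial^\beta v\|^2$ of $\|B\|^2$, since the factor $\tfrac12$ times $2$ gives coefficient one.

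The cross terms that remain have the form $\langle g_\iota\cdot\grad\partial^\beta v,\,\text{lower}\rangle$. Using $\Div g_\iota=0$ (plus derivatives falling on $v\ast\eta_\varepsilon$), we integrate by parts: $\langle g\cdot\grad w,w\rangle=-\tfrac12\langle\Div g\,w,w\rangle$. Any leftover term with one extra derivative is absorbed by Young's inequality into $\tfrac12\|v\|_V^2$. This produces the squared factor $C_g^2$ together with the power $(1+\|v\|_{L^1})^{2(k+2)}$.

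Throughout, constants depend on $\varepsilon$ through $\|\partial^\beta\eta_\varepsilon\|_{L^\infty}$ and never on $R$, since $\gamma_R$ only appears as the prefactor $\gamma_R\le1$.
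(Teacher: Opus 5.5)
Your proposal is correct and follows essentially the paper's argument: with $\varepsilon$ fixed, every derivative landing on $g_\iota(v\ast\eta_\varepsilon)$, $g_\iota'(v\ast\eta_\varepsilon)$ or on a convolution is bounded by $C_\varepsilon\|g_\iota\|_{C^{k+2}(M_\varepsilon)}(1+\|v\|_{L^1})^{j}$; the second drift term is lower order; the top-order terms $\mp\|g_\iota\cdot\grad\partial^\beta v\|_{L^2}^2$ from the first drift term and from $\|B(v)\|^2$ cancel; and the cross terms are split by Young's inequality and absorbed by the Laplacian, so the integration by parts via $\Div g_\iota=0$ is not needed. Two bookkeeping slips need fixing: the reduction in Step 3 must concern the signed quantity $-2\langle F(v),\grad v\rangle_H+\|B(v)\|^2$ and not $2|\langle F(v),\grad v\rangle_H|+\|B(v)\|^2$, because the absolute value destroys the cancellation you then use; and since $-2\|\grad v\|_H^2$ only yields $-2(\|v\|_V^2-\|v\|_{L^2}^2)$, you should absorb the $\delta$-terms $\delta\|\grad\partial^\beta v\|_{L^2}^2$ directly into $-2\|\grad v\|_H^2$, using $-(2-\delta)\|\grad v\|_H^2\le\|v\|_{L^2}^2-\|v\|_V^2\le\|v\|_H^2-\|v\|_V^2$ for $\delta\le 1$, rather than into $\tfrac12\|v\|_V^2$, where the claimed room to spare is not there.
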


\begin{proof}[Proof of \cref{lemma:BoringEstimate}]
We prove the coercivity estimate; the growth estimate follows by similar calculations.  For any $v\in V$ we have by definition of the coefficients $A$, $B$:
\begin{equation*}
\begin{aligned}
&  2 \leftindex_{V^*}{\langle} A(t,v),v\rangle_V + \|B(t,v)\|^2_{\LUH} \\
& \quad = - 2\|\grad v\|^2_H  -\langle \sum_{\iota \geq 1} \gamma^2(v) \gv \Div(v\gv )), \grad v\rangle_H  \\
& \qquad -\langle \sum_{\iota \geq 1}  \gamma^2(v) v\gdv  \Div(v\gv)\ast\eta_\varepsilon, \grad v\rangle_H
+ \sum_{\iota \geq 1}  \|\gamma(v) \Div(v g_\iota(v\falt))\|_H^2\\
\end{aligned}
\end{equation*}
Let $\alpha\in\N^d$ such that $|\alpha| = j \leq k+1$. We estimate the lower order terms by noting $$\|\partial^{j}_{\alpha} \gv \|_{L^\infty(\T^d)} \leq (1+\|v\|_{L^1(\T^d)})^{j} \|g_\iota\|_{C^{j}(M_\varepsilon)}C_\varepsilon, $$ where 
$M_\varepsilon:= [-\|v\ast\eta_{\varepsilon}\|_{L^\infty},\|v\ast\eta_{\varepsilon}\|_{L^\infty}]\times\T^d$ and $C_\varepsilon$ denotes a constant that depends on $\varepsilon$ and $\eta$. We obtain for any $\delta>0$
\begin{equation*}
\begin{aligned}
& -\langle  \partial_\alpha^j \sum_{\iota \geq 1} \gamma^2(v) \gv \Div(v\gv )), \partial_\alpha^j \grad v\rangle_{L^2(\T^d)}  \\
& -\langle \partial_\alpha^j  \sum_{\iota \geq 1}  \gamma^2(v) v\gdv  \Div(v\gv)\ast\eta_\varepsilon, \partial_\alpha^j \grad v\rangle_{L^2(\T^d)} \\
& + \sum_{\iota \geq 1}  \| \partial_\alpha^j (\gamma(v) \Div(v g_\iota(v\falt)))\|^2_{L^2(\T^d)}\\
&\quad  \leq -\langle  \sum_{\iota \geq 1} \gamma^2(v) \gv (\partial_\alpha^j \grad v)\gv ), \partial_\alpha^j \grad v\rangle_{L^2(\T^d)} \\
&\qquad + \sum_{\iota \geq 1}  \langle  \gamma(v) (\partial_\alpha^j \grad v) g_\iota(v\falt),  \gamma(v) (\partial_\alpha^j \grad v) g_\iota(v\falt)    )\rangle_{L^2(\T^d)}\\
& \qquad  + \delta \| \partial_\alpha^j \grad v\|_{L^2(\T^d)}^2 + \frac{1}{4\delta}\left(C_{\alpha,1} \sumi \|g_\iota\|_{C^{k+2}(M_\varepsilon)}^2 \gamma^2(v) (1+\|v\|_{L^1(\T^d)})^{k+2} C_\varepsilon^2 \right)^2  \|v\|_H^2 \\
& \qquad + \left(C_{\alpha,2}\sumi \|g_\iota\|_{C^{k+2}(M_\varepsilon)}^2 \gamma^2(v) (1+\|v\|_{L^1(\T^d)})^{2(k+2)} C_\varepsilon^2 \right)  \|v\|_H^2 
\end{aligned}
\end{equation*}
Taking the Laplacian term into account, summing over $|\alpha| \leq k$ or  $|\alpha| \leq k+1$, and choosing $\delta >0$ sufficiently small implies the assertion.

\end{proof}

\begin{lemma}\label{thm:local-in-spaceEquationCutoffWP}
Assume \cref{assumption}. Then, for all $u_0\in  L^2(\Omega, \F_0, \PP; H^k(\T^d))$, $T>0$ and $R\in(0,\infty)$ there exists a unique variational solution $(u^R(t))_{t\in[0,T]}$ of \eqref{eq:approx-cut-local}. Moreover, local-in-time solutions are unique.
\end{lemma}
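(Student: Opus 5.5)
The plan is to apply \cref{cor:LiuRoeckner} with the Gelfand triple $V=H^{k+1}(\T^d)\subset H=H^k(\T^d)\subset V^*$, exponent $\alpha=2$, and $f$ a constant. The cutoff makes every coefficient involving $g_\iota$ vanish unless $\|u\|_{H^k}\le R+1$. Since $k>d/2+1$, Sobolev embedding then gives $\|u\|_{W^{1,\infty}}\le C(R)$ on the support of $\gamma_R$. Consequently all $g_\iota(u,x)$ and their derivatives up to order $k+3$ are evaluated on the compact set $[-C(R),C(R)]\times\T^d$, where the summability assumption $\sum_\iota\|g_\iota\|^2_{C^{k+3}([-K,K]\times\T^d)}<\infty$ applies. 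Hemicontinuity (H1) is immediate: the coefficients are built from compositions of smooth functions with $u$, its derivatives up to order one, and the smooth map $\gamma_R(\|\cdot\|_{H^k})$, so $\lambda\mapsto\langle A(u+\lambda v),w\rangle$ is continuous.

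\textbf{Coercivity (H3) and growth (H4).} I would follow the proof of \cref{lemma:BoringEstimate}, with local composition in place of convolution. Apply $\partial^\beta$ with $|\beta|\le k$. The top-order terms are
\[
-\langle \g_\iota'\,\g_\iota'\cdot\nabla\partial^\beta u,\nabla\partial^\beta u\rangle
\quad\text{from the drift,}\qquad
\|\g_\iota'\cdot\nabla\partial^\beta u\|^2
\quad\text{from the noise.}
\]
Here $\Div(\g_\iota(u))=\g_\iota'\cdot\nabla u$ because $\Div g_\iota=0$. These two terms cancel exactly; this is the Stratonovich structure. All remaining terms involve at most $k+1$ derivatives in total, with at most $k$ on any factor except one. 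Using Moser/Kato–Ponce commutator estimates together with $\|u\|_{W^{1,\infty}}\le C(R)$, they are bounded by $C(R)\gamma_R^2\|u\|_{H^k}\|u\|_{H^{k+1}}$. By Young's inequality and $\gamma_R\le1$, this is at most $\delta\|u\|_V^2+C(R,\delta)\|u\|_H^2$. Together with $-2\|\nabla u\|_H^2=-2\|u\|_V^2+2\|u\|_H^2$, this yields (H3) with $\theta=1$. For (H4), I would bound $\|\g_\iota'\Div\g_\iota\|_{H^k}\le C(R)\|u\|_{H^{k+1}}$ on the support of $\gamma_R$, which gives $\|A(v)\|_{V^*}^2\le C(R)\|v\|_V^2$.

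\textbf{Local monotonicity (H2').} This is the main obstacle, because the cutoff factors differ between $u$ and $v$. I would split
\[
\gamma_R^2(u)F(u)-\gamma_R^2(v)F(v)=\gamma_R^2(u)\bigl(F(u)-F(v)\bigr)+\bigl(\gamma_R^2(u)-\gamma_R^2(v)\bigr)F(v),
\]
and similarly for $B$, using $\gamma_R$ rather than $\gamma_R^2$. This case needs care only when at least one of $u,v$ has $H^k$-norm $\le R+1$; by symmetry, arrange that this is $u$. For the difference term $F(u)-F(v)$, the top-order part is $\nabla\partial^\beta(u-v)$ with coefficients depending on $u$. This part again cancels between drift and noise. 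The error, where coefficients depend on $v$ instead of $u$, costs $\|v\|_{V}$-type factors times $\|u-v\|_H$. The lower-order parts are bounded by $\delta\|u-v\|_V^2+C(1+\|u\|_V^2+\|v\|_V^2)\|u-v\|_H^2$, and the $\delta$ term is absorbed by $-2\|\nabla(u-v)\|^2$.

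For the cutoff difference, Lipschitz continuity of $\gamma_R$ and of $\sqrt{\gamma_R}$ gives
\[
|\gamma_R(u)-\gamma_R(v)|\le C\|u-v\|_H .
\]
Then $F(v)$ paired with $\nabla(u-v)$ is handled by Young's inequality, contributing $C\|v\|_V^2\|u-v\|_H^2$. Here I would use that when $\|v\|_{H^k}>R+2$ one may bound via $\|u-v\|_H\ge1$ and rewrite the term in terms of $u$. For the noise, I would estimate
\[
\bigl\|\bigl(\gamma_R(u)-\gamma_R(v)\bigr)\Div\g_\iota(v)\bigr\|^2\le C\|u-v\|_H^2\,\|v\|_V^2
\]
on the relevant region. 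This yields $\rho(u,v)\lesssim1+\|u\|_V^2+\|v\|_V^2$. The corollary is stated with $\rho$ depending only on $u$, so I would arrange the argument so that only $u$ appears, by choosing the roles in the splitting appropriately. This gives (H2') and condition \eqref{eq:LR-samecondition} with $\alpha=2$.

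\cref{cor:LiuRoeckner} then yields existence and uniqueness of the variational solution, and uniqueness of local-in-time solutions.
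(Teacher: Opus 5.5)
Your route is the paper's: it simply invokes \cref{cor:LiuRoeckner} with $H=H^k(\T^d)$, $V=H^{k+1}(\T^d)$ and omits the verification. Your treatment of (H1), (H3) and (H4) is sound. The top-order drift and noise terms cancel exactly (Stratonovich structure), and everything else is bounded by $C(R)\|u\|_{H^k}\|u\|_{H^{k+1}}$.

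\textbf{The gap in (H2').} It sits exactly at the point you defer to your last sentence. The left-hand side of (H2') is symmetric in $(u,v)$, while $\rho(u)$ is not. So (H2') for all pairs is equivalent to the bound with $\min\{\rho(u),\rho(v)\}$.
\begin{itemize}
\item Your normalisation ``by symmetry, $u$ is the point with $\|u\|_{H^k}\le R+1$'' therefore does not reduce the work. Afterwards you must still prove the inequality with $\rho$ evaluated at $v$, which may lie outside the support of $\gamma_R(\|\cdot\|_{H^k})$.
\item For $\|v\|_{H^k}\le R+2$ this is fine. Your splitting (coefficients at $u$, rough factor $\nabla v$) can be made to give $C(1+\|v\|_V^2)$, and the reversed splitting gives $C(1+\|u\|_V^2)$. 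So ``choosing roles'' works there.
\item For $\|v\|_{H^k}>R+2$ you only ``rewrite the term in terms of $u$''. That yields $C(R)\|u\|_V^2\|u-v\|_H^2$, i.e.\ $\rho$ at the inside point.
\item No choice of splitting fixes this, because the separate pieces involve $\tilde g_\iota'(v)\Div\tilde g_\iota(v)$, which is not controlled by $R$ there. The crude bound $\gamma_R^2(\|u\|_{H^k})\|\Div\tilde g_\iota(u)\|_{H^k}^2\le C(R)\|u\|_V^2\le 2C(R)(\|v\|_V^2+\|u-v\|_V^2)$ leaves $2C(R)\|u-v\|_V^2$. The term $-2\|\nabla(u-v)\|_{H^k}^2$ cannot absorb this unless $C(R)$ is small.
\end{itemize}
As it stands you only have a two-variable $\rho(u,v)$. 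That suffices for uniqueness, but not for the monotonicity argument in the existence part of \cite[Theorem 5.1.3]{LiuRoeckner}. There $\rho$ is evaluated at the test process, while the other argument is a Galerkin approximation.

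\textbf{The fix.} The missing step is one more use of the drift--noise cancellation. Write $A=A_1+\Delta$, where $A_1$ is the cutoff part. If $\gamma_R(\|v\|_{H^k})=0$, then $A(v)=\Delta v$ and $B(v)=0$, so
\begin{equation*}
2\langle A(u)-A(v),u-v\rangle+\|B(u)-B(v)\|^2_{\LUH}=\bigl[2\langle A_1(u),u\rangle+\|B(u)\|^2_{\LUH}\bigr]-2\langle A_1(u),v\rangle-2\|\nabla(u-v)\|_{H^k}^2 .
\end{equation*}
By your (H3) computation the bracket is at most $C(R)(1+\|u\|_V)$. By your (H4) computation the middle term is at most $C(R)\|u\|_V\|v\|_V$. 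Now insert $\|u\|_V\le\|v\|_V+\|u-v\|_V$, apply Young's inequality with a small weight on $\|u-v\|_V^2$, and use $\|u-v\|_H\ge 1$ (valid since $\|u\|_{H^k}\le R+1<R+2<\|v\|_{H^k}$). This gives $\delta\|u-v\|_V^2+C(1+\|v\|_V^2)\|u-v\|_H^2$, which is the required bound with $\rho$ at the outside point. Together with your intermediate-region splittings, this completes (H2') with $\rho(w)=C(1+\|w\|_V^2)$.
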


\begin{proof}[Proof of \cref{thm:local-in-spaceEquationCutoffWP}] The assertion of the lemma follows immediately from an application of \cref{cor:LiuRoeckner} by choosing $H = H^{k}(\T^d)$ and $V=H^{k+1}(\T^d)$. 
The straightforward but somewhat lengthy calculations verifying that the assumptions (H1), (H2'), (H3) and (H4) are indeed satisfied are omitted here.
\end{proof}

\begin{lemma}\label{thm:nonlocal-in-spaceEquationCutoffWP}
Assume \cref{assumption}. Then, for all $u_0\in  L^2(\Omega, \F_0, \PP; H^k(\T^d))$, $T>0$, $R\in(0,\infty)$ and $\varepsilon\in(0,1)$ there exists a unique variational solution $(\uer(t))_{t\in[0,T]}$ of \eqref{eq:approx-cut-nonlocal}.
If in addition $u_0\in  L^2(\Omega, \F_0, \PP; H^{k+1}(\T^d))$, then $(\uer(t))_{t\in[0,T]}$ is a continuous $H^{k+1}(\T^d)$-valued $(\F_t)$-adapted process and satisfies $\hat{u}^{\varepsilon,R} \in L^2([0,T]\times\Omega, \dd t \otimes \dd \PP, H^{k+2}(\T^d))$. Moreover, local-in-time solutions are unique.
\end{lemma}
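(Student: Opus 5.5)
The plan is to apply \cref{cor:LiuRoeckner} twice: once with the Gelfand triple $(H,V)=(H^k(\T^d),H^{k+1}(\T^d))$ for data in $H^k$, and once with $(H^{k+1}(\T^d),H^{k+2}(\T^d))$ for data in $H^{k+1}$. In both cases I take $\alpha=2$ and the coefficients from \cref{def:approx-cut-sol} 1). Uniqueness of local-in-time solutions comes directly from the last assertion of \cref{cor:LiuRoeckner}.

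For coercivity (H3) and growth (H4) I will use \cref{lemma:BoringEstimate}, which covers both choices of $(H,V)$. The prefactor there involves $\gamma_R^2(\|v\|_{H^k})$, $(1+\|v\|_{L^1})^{2(k+2)}$ and $C_g$, where $C_g$ is evaluated on $M_\varepsilon=[-\|v\ast\eta_\varepsilon\|_{L^\infty},\|v\ast\eta_\varepsilon\|_{L^\infty}]\times\T^d$. On the support of $\gamma_R$ we have $\|v\|_{H^k}\le R+1$. Hence $\|v\|_{L^1}\lesssim R+1$ and $\|v\ast\eta_\varepsilon\|_{L^\infty}\le C_\varepsilon\|v\|_{L^1}\le C_{\varepsilon}(R+1)=:K$. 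By \cref{assumption} 4), $\sum_\iota\|g_\iota\|^2_{C^{k+3}([-K,K]\times\T^d)}<\infty$, so the whole prefactor is bounded by a constant $C(R,\varepsilon)$. This gives (H3) in the form $2\langle A(v),v\rangle+\|B(v)\|^2\le C_0\|v\|_H^2-\|v\|_V^2$ and (H4) in the form $\|A(v)\|_{V^*}^2\le C_0\|v\|_V^2$, with $f\equiv0$. Hemicontinuity (H1) holds because every term of $A$ is a composition of continuous maps: $\lambda\mapsto u+\lambda v$ in $V$, convolution, the $C^{k+3}$ maps $g_\iota$, and the smooth cutoff $\gamma_R$. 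Dominated convergence in the series over $\iota$ is justified by the same summability of the $g_\iota$.

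The main work is (H2'). I expand $2\langle A(u)-A(v),u-v\rangle+\|B(u)-B(v)\|^2_{L_2}$ and handle the cutoff first. I write $\gamma_R(\|u\|_{H^k})\,G(u)-\gamma_R(\|v\|_{H^k})\,G(v)=\gamma_R(\|u\|_{H^k})(G(u)-G(v))+(\gamma_R(\|u\|_{H^k})-\gamma_R(\|v\|_{H^k}))G(v)$. The Lipschitz property of $\gamma_R$ (and of $\sqrt{\gamma_R}$, which is what controls $\gamma_R^2$ against $\gamma_R$) bounds the second piece by $\|u-v\|_H$ times a quantity controlled by $\|v\|_V$. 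That quantity gets absorbed into $\rho(v)\lesssim 1+\|v\|_V^2$, which is exactly the structure allowed in \eqref{eq:LR-samecondition}. For the first piece I follow the highest-order cancellation already used in the proof of \cref{lemma:BoringEstimate}, now applied to $w=u-v$. The top-order term $-\langle g_\iota\,\partial^\alpha\nabla w\, g_\iota,\partial^\alpha\nabla w\rangle$ from the Itô correction cancels exactly against $\|g_\iota\cdot\partial^\alpha\nabla w\|^2$ from the noise. Every remaining term has at most $k+1$ derivatives on $w$, with one factor possibly carrying up to $k+2$ derivatives of $u$ or $v$. Because $\varepsilon$ is fixed, derivatives falling on $g_\iota(u\ast\eta_\varepsilon)$ cost only $C_\varepsilon\|u\|_{L^1}$. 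I then apply Young's inequality to split off $\delta\|\nabla w\|_{H^k}^2$, which the Laplacian term absorbs, leaving $C(1+\|u\|_V^2+\|v\|_V^2)\|w\|_H^2$.

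The obstacle I expect is precisely this step. Terms such as $\partial^\alpha\big(w\,\nabla\cdot g_\iota(u\ast\eta_\varepsilon)\big)$ or $(g_\iota(u\ast\eta_\varepsilon)-g_\iota(v\ast\eta_\varepsilon))\,\partial^\alpha\nabla v$ must be estimated by $\|w\|_H$ times $\|v\|_{V}$, and one has to check that the $H^{k+1}$-norm of $v$ enters only quadratically. I will handle them with the Sobolev embedding $H^{k}\subset W^{1,\infty}$ (valid since $k>d/2+1$) and Moser-type product estimates. $\rho$ must also be a function of one argument, so the roles of $u$ and $v$ have to be symmetrized; I will do this by expanding around the argument chosen for $\rho$ and bounding $\gamma_R$ terms by the cutoff. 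Once (H1)–(H4) hold, \cref{cor:LiuRoeckner} gives existence in $H^k$. It also gives existence in $H^{k+1}$, together with $\E\sup_t\|u^{\varepsilon,R}\|_{H^{k+1}}^2<\infty$ and $\hat u^{\varepsilon,R}\in L^2([0,T]\times\Omega;H^{k+2})$. The $H^{k+1}$ solution is in particular an $H^k$ variational solution, so by uniqueness it coincides with the first one. This yields the claimed regularity and continuity in $H^{k+1}$.
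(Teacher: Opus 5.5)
Your treatment of existence and uniqueness in the triple $(H^k,H^{k+1})$ matches the paper. Both apply \cref{cor:LiuRoeckner} with (H3) and (H4) taken from \cref{lemma:BoringEstimate}; the paper simply omits the (H1) and (H2') computations.

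For the higher regularity you take a different route. You apply \cref{cor:LiuRoeckner} a second time in $(H^{k+1},H^{k+2})$ and identify the resulting solution with the $H^k$ one by uniqueness. The paper never constructs a second solution. Instead:
\begin{enumerate}
\item It chooses the Galerkin basis of the $H^k$ construction to be smooth and orthogonal in both $H^k$ and $H^{k+1}$. The projections are then compatible with the $H^{k+1}$ inner product.
\item Hence (H3) of \cref{lemma:BoringEstimate} for $(H^{k+1},H^{k+2})$ applies directly to the same approximations $X^{(n)}$ and bounds them in $L^2([0,T]\times\Omega;H^{k+2})$.
\item Weak convergence transfers this bound to $\hat u^{\varepsilon,R}$.
\item Continuity in $H^{k+1}$ then follows from \cite[Theorem 4.2.5]{LiuRoeckner}, using (H3) and (H4) in the higher triple.
\end{enumerate}

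The trade-off is as follows. The paper's route needs only coercivity and growth one level up, which \cref{lemma:BoringEstimate} already supplies. Yours additionally needs (H1) and, more substantially, (H2') in $(H^{k+1},H^{k+2})$ with $\rho(v)\lesssim 1+\|v\|_{H^{k+2}}^2$. Your sketch does not cover this: it is written with $\delta\|\nabla w\|_{H^k}^2$, i.e. in the lower triple. The estimate does go through by the same computation one derivative higher. The cutoff still acts on $\|\cdot\|_{H^k}$, so $|\gamma_R(\|u\|_{H^k})-\gamma_R(\|v\|_{H^k})|\le L\|u-v\|_{H^{k+1}}$. A term such as $(g_\iota(u\ast\eta_\varepsilon)-g_\iota(v\ast\eta_\varepsilon))\cdot\nabla v$, measured in $H^{k+1}$, costs $C_{R,\varepsilon}\|u-v\|_{L^1}\|v\|_{H^{k+2}}$. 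In return, your route gives $\E\sup_t\|u^{\varepsilon,R}(t)\|_{H^{k+1}}^2<\infty$ and uniqueness in the smaller class for free.

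One point in your (H2') sketch needs repair in either triple. By \cref{assumption} 4), $g_\iota$ is only locally bounded in $u$. Your split
$\gamma_R(\|u\|_{H^k})(G(u)-G(v))+(\gamma_R(\|u\|_{H^k})-\gamma_R(\|v\|_{H^k}))G(v)$
is therefore harmful when $\gamma_R(\|v\|_{H^k})=0$. In that case $G(v)$ involves $g_\iota$ at heights of order $\|v\ast\eta_\varepsilon\|_{L^\infty}$, which can be arbitrarily large, and the resulting $\rho$ would violate \eqref{eq:LR-samecondition}. Moreover, the left-hand side of (H2') is symmetric in $(u,v)$, so the bound must hold with $\rho$ evaluated at either argument. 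You need a case distinction:
\begin{itemize}
\item If both cutoffs vanish, only the Laplacian remains.
\item If exactly one is nonzero, say at $u$, then no coefficients evaluated at $v$ appear, and $\gamma_R(\|u\|_{H^k})\le L\|u-v\|_{H^k}$. After the top-order cancellation, write $\nabla u=\nabla v+\nabla(u-v)$.
\item Only when both cutoffs are nonzero does your split work, with constants $C(R,\varepsilon)$.
\end{itemize}
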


\begin{proof}[Proof of \cref{thm:nonlocal-in-spaceEquationCutoffWP}] 

\textit{Existence of a variational solution:} The existence of a variational solution follows from an application of \cref{cor:LiuRoeckner} for the choice $H = H^{k}(\T^d)$ and $V=H^{k+1}(\T^d)$. Assumptions (H3) and (H4) of  \cref{cor:LiuRoeckner} follow from 
\cref{lemma:BoringEstimate}. The straightforward but somewhat lengthy calculations to verify the assumptions (H1) and (H2') are omitted here.

\textit{Additional regularity of the variational solution:} As all assumptions of \cref{cor:LiuRoeckner} for the choice $H = H^{k}(\T^d)$ and $V=H^{k+1}(\T^d)$ are satisfied, this implies in particular also that the Galerkin approximations $X^{(n)}$  as defined in  \cite[Theorem 5.1.3.]{LiuRoeckner} converge weakly in $K:=L^{2}([0,T]\times\Omega, \dd t \otimes \dd \PP, H^{k+1}(\T^d))$ to the solution $\uer$ of the SPDE. In this construction, we may choose the orthonormal basis of
$H^k(\T^d)$ used in the construction of the Galerkin approximations to be a complete orthogonal system of $H^{k+1}(\T^d)$ and  to satisfy $(e_k)_{k\in\N} \subset C^\infty(\T^d)$. For $u\in H_n:= \textrm{span}\{e_1, \cdots, e_n\}\subset C^\infty(\T^d)$ define
\begin{equation*}
\begin{aligned}
\bar{A}(t,u) & := \frac{1}{2} \sumi  \gamma^2_R( \|u\|_{H^{k}})  \Div(g_\iota(u\falt) \Div(u g_\iota(u\falt)))) \\
& + \frac{1}{2} \sumi  \gamma^2_R( \|u\|_{H^{k}}) \Div(u g_\iota'(u\falt) \Div(u g_\iota(u\falt))\falt)) + \Delta u. 
\end{aligned}
\end{equation*}
Due to the regularity of $u\in C^\infty(\T^d)$, we have $\bar{A}(t,u) \in H^{k+1}(\T^d)$ and hence $\leftindex_{H^l(\T^d)^*}{\langle} A(t,u), v\rangle_{H^l(\T^d)} = \langle \bar{A}(t,u), v\rangle_{H^l(\T^d)}$ for $l=k, k+1$. 
By \cref{lemma:BoringEstimate}, conditions (H3) and (H4) are satisfied for $(H,V)=(H^{k+1}(\T^d),H^{k+2}(\T^d))$. In particular, (H3) implies that $X^{(n)}$ are bounded in $L^{2}([0,T]\times\Omega, \dd t \otimes \dd \PP, H^{k+2}(\T^d))$ and hence, by weak convergence,
\begin{equation}\label{lemma34:extrareg}
\hat{u}^{\varepsilon, R} \in L^{2}([0,T]\times\Omega, \dd t \otimes \dd \PP, H^{k+2}(\T^d)).
\end{equation}
By \eqref{lemma34:extrareg}, (H3) and (H4) for $(H, V) = (H^{k+1}(\T^d),  H^{k+2}(\T^d))$ and \cite[Theorem 4.2.5]{LiuRoeckner} we may conclude that $(u^{\varepsilon,R}(t))_{t\in[0,T]}$ is a continuous $H^{k+1}(\T^d)$-valued adapted process.
\end{proof}

\section{Regularity theory for the local and nonlocal SPDEs}\label{sec:4}
In this section, we establish regularity theory for the local-in-space and nonlocal-in-space SPDEs. More precisely, we prove local well-posedness and a blow-up criterion for the local-in-space equation (see \cref{thm:local-in-spaceEquationWP}), as well as global well-posedness for the nonlocal-in-space equation (see \cref{thm:nonlocal-in-spaceEquationWP}). The local existence results themselves follow directly from the global well-posedness of the corresponding tamed SPDEs established in the previous section. The main point of the present section is therefore the derivation of the blow-up criterion for the local-in-space equation.

This blow-up criterion can be informally interpreted as follows: It characterizes what it means for solutions to \eqref{eq:SPDElocal} to have 
$H^k(\T^d)$ regularity for $k=\ceil{d/2 + 3/2}$ (and it would similarly also work for higher $k$). Assume that $u(t)$ is a solution (of some suitable type) to the local-in-space SPDE that satisfies up to some stopping time $\tau$
\[\int_0^\tau \|u(t)\|_{W^{1,\infty}}^2 \mathrm dt<\infty \quad \PP \text{-a.s.} \qquad \text{or} \qquad 
\sup_{t\leq \tau} \|u(t)\|_{C^{\alpha}(\T^d)} <\infty \quad \PP \text{-a.s. for some } \alpha>0,
 \]
then
\[
\sup_{t\leq \tau} \|u(t)\|_{H^k(\T^d)}<\infty \qquad \PP \text{-almost surely}
\]
hence preventing blow-up of $H^k(\T^d)$ norm before time $\tau$. In particular, this allows us to control the high Sobolev regularity required in the main result through the time-integrated $W^{1,\infty}$-norm rather than an a priori bound on the full $H^k$-norm.

\subsection{Local SPDE: Local well-posedness and blow-up criterion}
\label{subsec:thm:nonlocal-in-spaceEquationWP}

The main result of this subsection is \cref{thm:local-in-spaceEquationWP}, which establishes the existence of a unique maximal solution to the local SPDE \eqref{eq:SPDElocal} and provides the blow-up criterion. We first note two simple consequences of the Gagliardo–Nirenberg inequality (\cref{rmk:simpleInequality}), which is then used to establish energy estimates (\cref{lemma:ItoLocalEq}). Combining these estimates with a stochastic version of Gronwall's inequality (\cref{lemma:StochGron}) yields the proof of  \cref{thm:local-in-spaceEquationWP}.

\begin{remark}\label{rmk:simpleInequality} Let $v\in H^{k+1}(\T^d)$ for $k\geq  \lceil d/2 + 3/2\rceil$, $n\geq 2$, $1\leq l_i \leq k$, $\sum_{i=1}^n l_i \leq k+1$, $\beta_i \in \N^d$, $|\beta_i| = l_i$ and set 
\begin{equation*}
P(v) := \prod_{i=1}^n \partial^{l_i}_{\beta_i} v.
\end{equation*}
 Choose $\theta$ such that $0<\theta<1/k$ and let $X_\theta$ be a fractional Sobolev (Slobodeskii) space $W^{s,p}(\T^d)$, $p\in[1,\infty), s\in(0,k]$, with Sobolev index $\theta:= s-d/p$.
Then, for every $\delta>0$ there exists a constant $C_{\theta,\delta} = C(k, d, \theta, s, p, \delta)>0$ such that $P(v)$
can be estimated by 
\begin{equation}\label{eq:Pv-estimate}
\|v\|_{H^{k+1}} \|P(v)\|_{L^2} + \|P(v)\|_{L^2}^2 \leq \delta \|v\|_{H^{k+1}}^2 + C_{\theta,\delta}\|v\|_{X_\theta}^{2/\theta}\|v\|_{H^k}^2.
\end{equation}
Moreover, for every $\delta >0$, there exists a $C_\delta=C(\delta, d,k)>0$ such that
\begin{equation}\label{eq:Pv-estimate2}
\|v\|_{H^{k+1}}\|P(v)\|_{L^2}
+\|P(v)\|_{L^2}^2
\le
\delta\|v\|_{H^{k+1}}^2
+C_\delta\bigl(1+\|v\|_{L^\infty}^{2(k-1)}\bigr)
\|v\|_{W^{1,\infty}}^2
\|v\|_{H^k}^2.
\end{equation}

Inequality \eqref{eq:Pv-estimate} can be verified as follows: Define
\begin{equation*}
k_\theta := k+1 - (n-1)\theta \in (k,k+1), \quad \lambda_i := \frac{l_i - \theta}{k_\theta - \theta},
\end{equation*}
noting $\sum_{i=1}^n \lambda_i \leq 1$. Then, applying Hölder's inequality in the first inequality, a Gagliardo-Nirenberg inequality for fractional Sobolev spaces \cite[Theorem 1]{BrezisMironescu2019} in the second inequality ensured by $p<\infty$, and $W^{k_\theta,2}\hookrightarrow X_\theta$ in the third, and e.g. \cite[Theorem 1]{BrezisMironescu2018} in the fourth inequality, we have
\begin{equation*}
\begin{aligned}
\|P(v)\|_{L^2} \lesssim \prod_{i=1}^n \|\partial^{l_i}_{\beta_i} v\|_{2/\lambda_i} \lesssim \prod_{i=1}^n \|v\|_{X_\theta}^{1-\lambda_i}\|v\|_{W^{k_\theta,2}}^{\lambda_i}
\lesssim \|v\|_{X_\theta}^{n-1}\|v\|_{W^{k_\theta,2}} \lesssim \|v\|_{X_\theta}^{n-1}\|v\|^{(n-1)\theta}_{H^{k}} \|v\|^{1-(n-1)\theta}_{H^{k+1}} .
\end{aligned}
\end{equation*}
Now \eqref{eq:Pv-estimate} follows from an application of Young's inequality.

For inequality \eqref{eq:Pv-estimate2}, set $m:=\sum_{i=1}^{n}l_i-1\le k$.
First suppose that \(m\ge 2\), and choose $\eta:=\frac{m-1}{m(n-1)-1}\in(0,1]$. H\"older's inequality, applied after splitting each factor into
powers \(1-\eta\) and \(\eta\), gives
\begin{equation}
\label{eq:product-holder}
\begin{aligned}
\|P(v)\|_{L^2}
&=
\left\|
\prod_{i=1}^{n}|\partial^{l_i}_{\beta_i}v|^{1-\eta}
\prod_{i=1}^{n}|\partial^{l_i}_{\beta_i}v|^\eta
\right\|_{L^2}\\
&\le
\left(
\prod_{i=1}^{n}
\|\partial^{l_i}_{\beta_i}v\|_{L^{2m/l_i}}
\right)^{1-\eta}
\left(
\prod_{i=1}^{n}
\|\partial^{l_i}_{\beta_i}v\|_{L^{2(m-1)/(l_i-1)}}
\right)^\eta.
\end{aligned}
\end{equation}
Indeed, the H\"older exponents satisfy
\begin{equation*}
(1-\eta)\sum_{i=1}^{n}\frac{l_i}{2m}
+\eta\sum_{i=1}^{n}\frac{l_i-1}{2(m-1)}
=\frac12.
\end{equation*}
Here we used the convention $2(m-1)/(l_i-1)=\infty$ for $l_i=1$, and factors raised to the power zero are omitted.
We estimate \eqref{eq:product-holder} by applying the following two
Gagliardo--Nirenberg inequalities
\begin{equation}
\label{eq:gn-v}
\|\partial^{l_i}_{\beta_i}v\|_{L^{2m/l_i}}
\lesssim \|v\|_{L^\infty}^{\,1-l_i/m}
 \|v\|_{H^m}^{\,l_i/m},
\end{equation}
and
\begin{equation}
\label{eq:gn-gradient}
\|\partial^{l_i}_{\beta_i}v\|_{L^{2(m-1)/(l_i-1)}}
\lesssim \|\nabla v\|_{L^\infty}^{\,1-(l_i-1)/(m-1)}
 \|v\|_{H^m}^{\,(l_i-1)/(m-1)}.
\end{equation}
These follow by applying the classical Gagliardo--Nirenberg
inequality for bounded domains to $v$ and its first derivatives, respectively. Here, we note that there are two endpoint cases that can be handled directly without the Gagliardo--Nirenberg inequality: If $l_i = 1$, then \eqref{eq:gn-gradient} is immediate. If $l_i = m$, then $n=2$ and the other derivative must be of order $1$. Hence, in this special case, \eqref{eq:Pv-estimate2} is immediate.
Substituting \eqref{eq:gn-v} and \eqref{eq:gn-gradient} into \eqref{eq:product-holder} implies by $m\leq k$
\begin{equation*}
\|P(v)\|_{L^2}
\lesssim \|v\|_{L^\infty}^{n-2}\|\grad v\|_{L^\infty} \|v\|_{H^m}
\lesssim \|v\|_{L^\infty}^{n-2}\|\grad v\|_{L^\infty} \|v\|_{H^k},
\end{equation*}
which in turn implies by $n\leq k+1$ and Young's inequality \eqref{eq:Pv-estimate2} for $m\geq 2$. If $m=1$, then $n=2$ and $l_1=l_2=1$, so $\eqref{eq:Pv-estimate2}$ can be easily seen.
\end{remark}

\begin{lemma}\label{lemma:ItoLocalEq}
Assume \cref{assumption}, $u_0\in  L^2(\Omega, \F_0, \PP; H^k(\T^d))$, that $(u,\sigma)$ is a local-in-time solution of \eqref{eq:SPDElocal} and let $k$ be given as in \cref{assumption}. Let $(\tilde{\sigma}_n)_n$ denote the localizing sequence of $(u,\sigma)$. Define for all $n\in\N$
\begin{equation*}
\sigma_n := \inf\{t\geq 0 \mid \|u(t)\|_{H^k} \geq n\}\wedge\tilde{\sigma}_n.
\end{equation*}
\begin{enumerate}
\item  Choose $\theta$ such that $0<\theta<1/k$ and let $X_\theta$ be a fractional Sobolev space $W^{s,p}(\T^d)$, $p\in[1,\infty), s\in(0,k]$, with Sobolev index $\theta$.
Then there exists a continuous local martingale $(M_t)_{t\in[0,\sigma)}$  with $M_0=0$ such that, for $t\geq 0$, $n\in\N$,
\begin{equation*}
\begin{aligned}
\|u(t\wedge\sigma_n)\|_{H^k}^2+ \inttn \|\grad u\|_{H^k}^2 \dd t
 &\leq\|u_0\|_{H^k}^2 + C_{\theta,g}\inttn \big(1+ \|u\|_{H^k}^2\big)\big(1+\|u\|_{X_\theta}^{2/\theta}\big) \dd t + M_{t\wedge\sigma_n},
\end{aligned}
\end{equation*}
where $C_{\theta, g}\in L^0(\Omega, \F_0, \PP)$ depends on $\theta$, $s$, $p$, $(g_{\iota})_{\iota \geq 1}$, $d$ and $\|u_0\|_{L^\infty}$.
\item There exists a continuous local martingale $(M_t)_{t\in[0,\sigma)}$  with $M_0=0$ such that, for $t\geq 0$, $n\in\N$,
\begin{equation*}
\begin{aligned}
\|u(t\wedge\sigma_n)\|_{H^k}^2+ \inttn \|\grad u\|_{H^k}^2 \dd t
 &\leq\|u_0\|_{H^k}^2 + C_{g}\inttn \big(1+ \|u\|_{H^k}^2\big)\big(1+\|u\|_{W^{1,\infty}}^2\big) \dd t + M_{t\wedge\sigma_n},
\end{aligned}
\end{equation*}
where $C_{g}\in L^0(\Omega, \F_0, \PP)$ depends on $(g_{\iota})_{\iota \geq 1}$, $d$ and $\|u_0\|_{L^\infty}$.
\end{enumerate}

\end{lemma}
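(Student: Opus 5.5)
We apply Itô's formula for the square of the $H^k$-norm to the stopped solution. Since $u_0$ is only in $H^k$, we first pass through the Gelfand triple: $H^k\subset H^{k+1}\subset H^{k+1,*}$ is not the right one, so we use $V=H^{k+1}$, $H=H^k$ and the Itô formula of \cite[Theorem 4.2.5]{LiuRoeckner}, localized to $[0,\sigma_n]$. For $|\alpha|\le k$ this gives
\begin{equation*}
\dd\|\partial^\alpha u\|_{L^2}^2 = \Big(-2\|\partial^\alpha\grad u\|_{L^2}^2 - \sumi\langle \partial^\alpha(\g_\iota'(u)\Div\g_\iota(u)),\partial^\alpha\grad u\rangle + \sumi\|\partial^\alpha\Div\g_\iota(u)\|_{L^2}^2\Big)\dd t + \dd M^\alpha_t,
\end{equation*}
with $M^\alpha_t=-2\sumi\int\langle\partial^\alpha\Div\g_\iota(u),\partial^\alpha u\rangle\dd W^\iota$, a continuous local martingale on $[0,\sigma)$. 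We sum over $|\alpha|\le k$ and set $M:=\sum_\alpha M^\alpha$.

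The key step is to exhibit the cancellation of the top-order terms between the Itô correction and the quadratic variation. Using $\Div g_\iota=0$, we have $\Div\g_\iota(u)=\g_\iota'(u)\cdot\grad u + u\,(\Div_x g_\iota)(u,x)=\g_\iota'(u)\cdot\grad u$ (the $x$-divergence vanishes). Hence, writing $b_\iota:=\g_\iota'(u,x)\in\R^d$, the correction term is $-\langle\partial^\alpha(b_\iota (b_\iota\cdot\grad u)),\partial^\alpha\grad u\rangle$ and the Itô term is $\|\partial^\alpha(b_\iota\cdot\grad u)\|^2$. Expanding both by Leibniz, the terms where all $|\alpha|$ derivatives fall on $\grad u$ are $-\|b_\iota\cdot\partial^\alpha\grad u\|^2+\|b_\iota\cdot\partial^\alpha\grad u\|^2=0$, exactly as in the proof of \cref{lemma:BoringEstimate}. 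Everything left is either of the form $\langle P(u)\text{-type product},\partial^\alpha\grad u\rangle$ or a square of such products, where each product has total derivative order $\le k+1$, at least two factors involving derivatives of $u$ of order $\ge1$ (because a derivative landed on $b_\iota$, producing $\partial_u$-derivatives of $g_\iota$ times derivatives of $u$) or lower-order terms with only $x$-derivatives of $g_\iota$ (bounded directly by $\|u\|_{H^k}\|u\|_{H^{k+1}}$). The coefficients are bounded using $\sumi\|g_\iota\|^2_{C^{k+3}([-K,K]\times\T^d)}$ with $K\ge\|u(t)\|_{L^\infty}$.

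Here lies the main obstacle: the constant must depend only on $\|u_0\|_{L^\infty}$, so we need an $L^\infty$ bound $\|u(t)\|_{L^\infty}\le\|u_0\|_{L^\infty}$ for the local equation. We would try to obtain this via a maximum principle: the equation is in Stratonovich form a conservative transport-type equation $\dd u=\Delta u\dd t-\sumi b_\iota\cdot\grad u\circ\dd W^\iota$ (using the divergence-free structure), so one applies Itô's formula to $\int(u-K)_+^2$ with $K=\|u_0\|_{L^\infty}$ and checks, again by the top-order cancellation, that it stays zero. Failing a clean proof, one can absorb $\|u\|_{L^\infty}\lesssim\|u\|_{X_\theta}$ or $\|u\|_{W^{1,\infty}}$ with polynomial growth into the right-hand side.

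With the $L^\infty$ control in hand, the products are estimated with \cref{rmk:simpleInequality}: \eqref{eq:Pv-estimate} with $\delta$ small gives part 1, and \eqref{eq:Pv-estimate2} gives part 2 (the factor $1+\|u\|_{L^\infty}^{2(k-1)}$ goes into $C_g$). Choosing $\delta$ so that $\delta\|u\|_{H^{k+1}}^2$ is absorbed into the dissipation $2\|\grad u\|_{H^k}^2$, leaving $\|\grad u\|_{H^k}^2$ on the left, and bounding the remaining $\|u\|_{H^k}^2$ via $\|u\|_{H^{k+1}}^2=\|u\|_{H^k}^2$-type terms by $(1+\|u\|_{H^k}^2)$, then integrating up to $t\wedge\sigma_n$, yields both inequalities.
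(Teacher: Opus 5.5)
Your proposal follows the paper's proof: It\^o's formula for $\|u\|_{H^k}^2$, exact cancellation of the top-order It\^o--Stratonovich correction against the quadratic variation, the maximum principle $\sup_{t<\sigma}\|u(t)\|_{L^\infty}\le\|u_0\|_{L^\infty}$ to make the constant $\F_0$-measurable, and finally \eqref{eq:Pv-estimate} resp.\ \eqref{eq:Pv-estimate2}; your maximum-principle sketch works, since after integration by parts the correction term equals $-\frac12\sumi\intT\varphi''(u)(\Div\g_\iota(u))^2$ and cancels the It\^o term exactly (take $\varphi$ a smoothed $(u-K)_+^2$, and likewise $(u+K)_-^2$). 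Discard the fallback of absorbing $\|u\|_{L^\infty}$ into $\|u\|_{X_\theta}$ or $\|u\|_{W^{1,\infty}}$: $\sumi\|g_\iota\|^2_{C^{k+3}([-K,K]\times\T^d)}$ may grow arbitrarily fast in $K$ and the constant would no longer be $\F_0$-measurable, so the maximum principle is genuinely needed.
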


\begin{proof}[Proof of \cref{lemma:ItoLocalEq}]
By replacing $u_0$ by $u_0 \one_{\{\|u_0\|_{H^k}<N\}}$, $N\in\N$ if necessary, we may assume without loss of generality that  $u_0\in  L^\infty(\Omega, \F_0, \PP, H^k(\T^d))$.

We have 
\begin{equation}\label{eq:Lemma55-proof-1}
\sup_{t<\sigma} \|u(t)\|_\infty \leq \|u_0\|_\infty \quad \PP-a.s.
\end{equation}
which can be seen as follows: For any convex $\varphi\in C^2(\R,\R)$ with bounded second derivative and any $n\in\N$, $t\geq 0$ we have by It\^o's formula  \cite[Proposition A.1]{Ito-DebusscheHofmanovaVovelle}
\begin{equation*}
\intT \varphi(u(t\wedge\sigma_n))  \leq  \intT \varphi(u_0)  + M_{t\wedge\sigma_n} 
\end{equation*}
for some continuous local martingale $M$ starting in $0$. Assuming in addition that $\varphi$ is non-negative implies by Lenglart's inequality \cite[Corollaire II]{Lenglart}
\begin{equation*}
\E\left[\sup_{t\leq \sigma_n} \sqrt{\intT \varphi(u(t))}\right] \leq C \E\left[\sqrt{\intT \varphi(u_0)}\right].
\end{equation*}
Let $f\in C^2(\R, [0,\infty))$ denote a convex function with bounded second derivative such that $f|_{(-\infty, 0]}=0$, $f(z)>0$ for all $z>0$.
Choosing $\varphi(u) := f(u-\|u_0\|_\infty)$ and $\varphi(u) := f(-\|u_0\|_\infty-u)$ implies \eqref{eq:Lemma55-proof-1}.

\bigskip
Let $\varphi(v):=|v|^2$, $j\leq k$, $\beta\in\N^d$, $|\beta|=j$, and apply the It\^o formula \cite[Proposition A.1]{Ito-DebusscheHofmanovaVovelle}:
\begin{equation}\label{eq:lemma54-eq1}
\begin{aligned}
\intT \varphi((\partial^j_{\beta} u)(t\wedge \sigma_n))
& = \intT \varphi(\partial^j_{\beta} u_0) - \frac{1}{2} \inttn \intT \sum_{\iota\geq 1} \varphi''(\partial^j_{\beta} u) (\grad\partial^j_\beta u)  \cdot \partial^j_\beta( \g'_{\iota} \Div(\g_{\iota}))\dd s \\
& \quad - \inttn \intT \varphi''(\partial^j_{\beta} u) |\grad\partial^j_{\beta}u|^2 \dd s  +  M_{t\wedge\sigma_n}\\
&  \quad
 + \frac{1}{2} \sum_{\iota\geq 1} \inttn  \intT \varphi''(\partial^j_{\beta} u) (\partial^j_{\beta} \Div(\g_{\iota}))^2 \dd s
\end{aligned}
\end{equation}
where $M$  defined by $M_t := - \sum_{\iota\geq 1} \inttn \intT  \varphi'(\partial^j_{\beta} u) (\partial^j_{\beta} \Div(\g_{\iota})) \dd B^{\iota}_s$ is a martingale with $M_0=0$ (which can be verified using e.g. the definition of $\sigma_n$). 
We observe that
$$\pm \frac{1}{2} \inttn \intT \sum_{\iota\geq 1} \varphi''(\partial^j_{\beta} u) (\grad\partial^j_\beta u)   \cdot \g'_{\iota} \partial^j_\beta(\Div(\g_{\iota}))\dd s$$
is contained in the second and the last term on the right-hand side of \eqref{eq:lemma54-eq1} and hence cancels, resulting in
\begin{equation*}
\begin{aligned}
\intT \varphi((\partial^j_{\beta} u)(t\wedge\sigma_n))
 &=\intT \varphi(\partial^j_{\beta} u_0) -  \inttn \intT \varphi''(\partial^j_{\beta} u) |\grad \partial^j_{\beta} u|^2 \dd t
+ M_{t\wedge\sigma_n} \\
& \quad -\frac{1}{2} \inttn \intT \sum_{\iota \geq 1} \varphi''(\partial^j_{\beta} u) ( \grad \partial^j_\beta u) \cdot
\bigg\{\sum_{\substack{\beta_1 + \beta_2 = \beta,\\ |\beta_1| \neq 0}}
c_{\beta_1, \beta_2} \partial^{|\beta_1|}_{\beta_1} (\g'_{\iota}) \partial^{|\beta_2|}_{\beta_2} \Div(\g_{\iota})\bigg\} \dd t\\
& \quad + \frac{1}{2} \sum_{\iota \geq 1} \inttn \intT \varphi''(\partial^j_{\beta} u) \bigg\{\sum_{\substack{\beta_1 + \beta_2 = \beta,\\ |\beta_1| <j}}c_{\beta_1, \beta_2}
\partial^{|\beta_1|}_{\beta_1} (\grad u)  \cdot  \partial^{|\beta_2|}_{\beta_2} (\g'_{\iota}) \bigg\}(\partial^j_{\beta} \Div(\g_{\iota})) \dd t 
\end{aligned}
\end{equation*}
where $c_{\beta_1, \beta_2}$ denote suitable constants. We observe that all terms that contain a product of two partial derivatives of $u$ of order $j+1$, apart from the Laplacian terms, have cancelled. We apply Hölder's inequality, \eqref{eq:Pv-estimate} of \cref{rmk:simpleInequality} and sum over $|\beta|\leq k$, yielding
\begin{equation*}
\begin{aligned}
\|u(t\wedge\sigma_n)\|_{H^k}^2
 &\leq \|u_0\|_{H^k}^2 -  2 \inttn \|\grad u\|_{H^{k}}^2 \dd t
+ M_{t\wedge\sigma_n} \\
& \quad +\delta \inttn \|u\|_{H^{k+1}}^2 \dd t + C(\delta, g) \inttn (1+ \|u\|_{H^k}^2)(1+\|u\|_{X_\theta}^{2/\theta}) \dd t,
\end{aligned}
\end{equation*}
and hence the first claim of the lemma. Applying instead \eqref{eq:Pv-estimate2} implies the second claim.

\end{proof}

For the convenience of the reader we cite here a special case of the stochastic Gronwall inequality provided in \cite[Definition 2.1, Definition 2.2, Corollary 5.4 b)]{ConvexGronwall}:
\begin{lemma}\label{lemma:StochGron}
Assume that $(\Omega, \PP, \F, (\F_t)_{t\geq 0 })$ is an underlying filtered probability space satisfying the usual conditions. Let
\begin{enumerate}
     \item $(X_t)_{t\geq 0}$ be an adapted continuous non-negative  process,
     \item $(A_t)_{t\geq 0}$ be an adapted continuous non-negative non-decreasing process,
     \item $(H_t)_{t\geq 0}$ be an adapted continuous non-negative non-decreasing process,
     \item $(M_t)_{t\geq 0}$ be a c\`adl\`ag local martingale with $M_0=0$.
    \end{enumerate}
    Assume that for all $t\geq 0$ we have
    \begin{equation*}
     X_t \leq \int_{(0,t]} X_s \dd A_s + M_t + H_t  \qquad \PP-a.s.
    \end{equation*}
    Then, we have for all $T>0$, $z>0$, $w>0$ and $R>0$
    \begin{equation*}
    \PP\left[\sup_{t\in[0,T]} X_t \geq z\right] \leq \frac{\e^R}{z}\E[H_T\wedge w] + \PP[H_T \geq w] + \PP[A_T>R]
    \end{equation*}    
    and, in particular, $\PP[A_T <\infty, H_T< \infty] = 1$ implies $\PP[\sup_{t\leq T} X_t <\infty] = 1$.
\end{lemma}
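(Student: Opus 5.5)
The plan is to reduce the claim to a Lenglart-type estimate for a nonnegative process of the form ``local martingale plus nondecreasing process'', after removing the $\dd A$ term with an integrating factor.

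\textbf{Step 1: a semimartingale majorant.} Since $X$ is only dominated by the right-hand side, I will work with the majorant
\[
Z_t:=\int_{(0,t]}X_s\,\dd A_s+M_t+H_t \ge X_t .
\]
This $Z$ is a càdlàg semimartingale with $Z_0=H_0$. Using $X\le Z$ and $\dd A\ge 0$, we get in the sense of measures
\[
\dd Z_t=X_t\,\dd A_t+\dd M_t+\dd H_t\le Z_{t}\,\dd A_t+\dd M_t+\dd H_t .
\]
Since $A$ is continuous and of finite variation, the product rule applies without a bracket term:
\[
\dd\bigl(\e^{-A_t}Z_t\bigr)=\e^{-A_t}\bigl(\dd Z_t-Z_t\,\dd A_t\bigr)\le \e^{-A_t}\,\dd M_t+\e^{-A_t}\,\dd H_t .
\]
Set $N_t:=\int_{(0,t]}\e^{-A_s}\,\dd M_s$. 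This is again a local martingale, because the integrand is continuous, adapted and bounded by $1$. Using $\e^{-A}\le 1$ and $H_0\ge 0$, integration gives $\e^{-A_t}Z_t\le H_0+N_t+(H_t-H_0)$. Hence
\[
0\le \e^{-A_t}X_t\le N_t+H_t=:Y_t .
\]

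\textbf{Step 2: localization.} Let $\rho:=\inf\{t: A_t>R\}\wedge\inf\{t:H_t\ge w\}$. On the event $\{A_T\le R,\ H_T<w\}$ we have $\rho\ge T$. Therefore
\[
\PP\Bigl[\sup_{t\le T}X_t\ge z\Bigr]\le \PP\Bigl[\sup_{t\le T\wedge\rho}X_t\ge z\Bigr]+\PP[A_T>R]+\PP[H_T\ge w].
\]
On $[0,T\wedge\rho]$ we have $A\le R$ by continuity of $A$, so $X\le \e^{R}Y$. It therefore suffices to show
\[
\PP\Bigl[\sup_{t\le T\wedge\rho}Y_t\ge z\e^{-R}\Bigr]\le \frac{\e^{R}}{z}\,\E[H_T\wedge w].
\]

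\textbf{Step 3: optional stopping (the main obstacle).} Let $\tau:=\inf\{t:Y_t\ge z\e^{-R}\}\wedge T\wedge\rho$. By right-continuity, $Y_\tau\ge z\e^{-R}$ on the event that the level is reached. The delicate point is justifying $\E[N_\tau]\le 0$, since $N$ is only a local martingale. I would argue as follows. Take a localizing sequence $\tau_j$ for $N$, so $\E[N_{\tau\wedge\tau_j}]=0$. On $[0,\tau]$ we have $N\ge -H\ge -w$, using $Y\ge 0$, the continuity of $H$, and $H_\rho\le w$. The family $N_{\tau\wedge\tau_j}+w$ is thus nonnegative, and Fatou's lemma gives
\[
\E[Y_\tau]\le \liminf_{j}\E\bigl[N_{\tau\wedge\tau_j}+H_{\tau\wedge\tau_j}\bigr]\le \E[H_{\tau}]\le \E[H_T\wedge w].
\]
In the last two steps I use monotone convergence for $H$, monotonicity of $H$, and $H_\tau\le H_\rho\le w$. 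Possible jumps of $M$ only cause overshoot at $\tau$, which increases $Y_\tau$ and so does not harm the bound.

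\textbf{Conclusion.} Markov's inequality gives
\[
z\e^{-R}\,\PP\Bigl[\sup_{t\le T\wedge\rho}Y_t\ge z\e^{-R}\Bigr]\le \E[Y_\tau]\le \E[H_T\wedge w],
\]
which is the asserted estimate. For the final statement, fix $\epsilon>0$ and choose $R$ and $w$ large so that $\PP[A_T>R]$ and $\PP[H_T\ge w]$ are each smaller than $\epsilon$. Since $\E[H_T\wedge w]\le w$, letting $z\to\infty$ yields $\PP[\sup_{t\le T}X_t=\infty]\le 2\epsilon$. As $\epsilon$ is arbitrary, $\sup_{t\le T}X_t<\infty$ almost surely.
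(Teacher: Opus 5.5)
The paper does not prove this lemma itself; it quotes it as a special case of \cite[Corollary 5.4 b)]{ConvexGronwall}. Your argument is therefore a self-contained alternative of the standard kind, and its main mechanism is correct. This includes the integrating factor $\e^{-A}$ in Step 1 (the product rule, and the fact that $N=\int\e^{-A}\,\dd M$ is a local martingale). It also includes the stopped Markov bound in Step 3, where Fatou passes from $\E[N_{\tau\wedge\tau_j}]=0$ to $\E[Y_\tau]\le\E[H_\tau]$.

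There is one small hole. In Step 3 you use $H_\tau\le H_\rho\le w$, but on the $\F_0$-event $\{H_0>w\}$ one has $\rho=0$ and $H_\rho=H_0>w$. Consequently the reduced claim at the end of Step 2 is false in general. Take $A\equiv0$, $M\equiv0$, $X\equiv H\equiv 2w$ and $z=2w\e^{R}$: the left-hand side of the reduced claim equals $1$, while $\frac{\e^R}{z}\E[H_T\wedge w]=\frac12$.

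The lemma itself survives, because $\{H_0\ge w\}\subseteq\{H_T\ge w\}$, and the repair is short. On $\{A_T\le R,\ H_T<w\}$ you also have $H_0<w$, so it suffices to bound $\PP[\sup_{t\le T\wedge\rho}X_t\ge z,\ G]$ with $G:=\{H_0<w\}\in\F_0$. Run Step 3 with $\one_G N$ (still a local martingale since $G\in\F_0$) and $\one_G H$; these satisfy $\one_G H_\tau\le H_T\wedge w$, which is exactly what you need.

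The analogous claim that $A\le R$ on $[0,T\wedge\rho]$ fails when $A_0>R$, but that case is harmless. There $\rho=0$, and $X_0\le H_0=Y_0$ holds directly from the hypothesis at $t=0$. Alternatively, replace $A$ by $A-A_0$, which leaves the hypothesis unchanged and only decreases $\PP[A_T>R]$.
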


The subsequent lemma shows local well-posedness and the blow-up criterion explained in the introduction of this section.
For related results in the stochastic maximal-regularity framework see for example local well-posedness  with $W^{2-2(1+\kappa)/p,p}$ spacial regularity \cite[Theorem 6.4]{AgrestiVeraar2022-Part1}, instaneuous regularization \cite[Theorem 6.3]{AgrestiVeraar2022-Part2}, and a blow-up criterion \cite[Theorem 4.9]{AgrestiVeraar2022-Part2}.

\begin{lemma}\label{thm:local-in-spaceEquationWP}
Assume \cref{assumption} and $u_0\in  L^2(\Omega, \F_0, \PP; H^k(\T^d))$. Then the following assertions hold:
\begin{enumerate}
\item There exists a unique maximal solution $(u,\sigma)$ of \eqref{eq:SPDElocal} in the sense of \cref{def:solHk}.

\item Choose $\theta$ such that $0<\theta<1/k$ and let $X_\theta$ be a fractional Sobolev space $W^{s,p}(\T^d)$, $p\in[1,\infty), s\in(0,k]$, with Sobolev index $\theta:=s-d/p$.
The unique maximal solution $(u,\sigma)$ satisfies for any $T>0$
 \begin{equation*}
 \PP\left[\int_0^{\sigma\wedge T} \|u(t)\|_{X_\theta}^{2/\theta} \dd t  < \infty, \sigma \leq T\right] = 0.
\end{equation*}

\item The unique maximal solution $(u,\sigma)$ satisfies for any $T>0$
\begin{equation*}
 \PP\left[\int_0^{\sigma\wedge T} \|u(t)\|_{W^{1,\infty}(\T^d)}^2 \dd t  < \infty, \sigma \leq T\right] = 0.
\end{equation*}
\end{enumerate}
\end{lemma}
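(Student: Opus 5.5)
For assertion 1 I will patch together the solutions of the tamed local equation. By \cref{thm:local-in-spaceEquationCutoffWP}, for each $R\in\N$ there is a unique global variational solution $u^R$ of \eqref{eq:approx-cut-local}. Set $\tau_R:=\inf\{t\ge0 \mid \|u^R(t)\|_{H^k}\ge R\}$. On $[0,\tau_R]$ we have $\gamma_R\equiv1$, so $(u^R,\tau_R)$ is a local-in-time solution of \eqref{eq:SPDElocal}. The coefficients of $(tL_{0})$ for $R$ and $R'>R$ agree on $\{\|v\|_{H^k}\le R\}$. Uniqueness of local-in-time solutions (\cref{thm:local-in-spaceEquationCutoffWP}) therefore gives $u^{R}=u^{R'}$ on $[0,\tau_R\wedge\tau_{R'}]$, which implies $\tau_R\le\tau_{R'}$. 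We then define $\sigma:=\lim_R\tau_R$ and $u:=u^R$ on $[0,\tau_R]$, with localizing sequence $\tau_R\wedge T_R$ for a suitable deterministic $T_R\uparrow\infty$; this also ensures $\tau_R<\sigma$ on $\{\sigma\neq0\}$. Two points need care:
\begin{itemize}
\item Maximality: if $(u^2,\sigma^2)$ is another local solution with localizing sequence $\sigma^2_n$, then stopping $u^2$ at $\sigma^2_n\wedge\inf\{\|u^2\|_{H^k}\ge R\}$ makes it a local solution of the $R$-tamed equation. Uniqueness then gives $u^2=u$ there and, letting $R,n\to\infty$, $\sigma^2\le\sigma$.
\item Square-integrability of $u_0$ is only needed per $R$; if necessary I will localize in $\Omega$ via $u_0\one_{\{\|u_0\|_{H^k}<N\}}$, which is $\F_0$-measurable and compatible with uniqueness.
\end{itemize}

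For assertions 2 and 3 I will argue by contradiction using \cref{lemma:ItoLocalEq} together with the stochastic Gronwall \cref{lemma:StochGron}. For a stopping-time level $N$, define
\[
\rho_N:=\inf\Big\{t \;\Big|\; \int_0^{t\wedge\sigma}\|u\|_{X_\theta}^{2/\theta}\,\dd s\ge N\Big\}\wedge T,
\]
and apply \cref{lemma:ItoLocalEq}(1) with $t$ replaced by $t\wedge\rho_N$. Set
\[
X_t=\|u(t\wedge\sigma_n\wedge\rho_N)\|_{H^k}^2,\qquad A_t=C_{\theta,g}\int_0^{t\wedge\sigma_n\wedge\rho_N}\big(1+\|u\|_{X_\theta}^{2/\theta}\big)\,\dd s,
\]
and $H_t=\|u_0\|_{H^k}^2+A_t$, so that $X_t\le \int_0^t X\,\dd A+M+H_t$. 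Then $A_T\le C_{\theta,g}(T+N)$ and $H_T$ is finite a.s. with bounds independent of $n$. Because $C_{\theta,g}$ is only $\F_0$-measurable (it depends on $\|u_0\|_{L^\infty}$), I will either restrict to events $\{C_{\theta,g}\le K\}\in\F_0$ or incorporate it into $A$, which the lemma permits since $A$ need only be adapted.

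\cref{lemma:StochGron} then yields $\PP[\sup_{t\le T}X_t\ge z]\to0$ as $z\to\infty$, uniformly in $n$. Letting $n\to\infty$, we get a.s. $\sup_{t<\sigma\wedge\rho_N}\|u(t)\|_{H^k}<\infty$. On the other hand, on $\{\sigma\le T,\ \rho_N\ge\sigma\}$ the definition of $\sigma=\lim\tau_R$ forces $\sup_{t<\sigma}\|u(t)\|_{H^k}=\infty$, since $\tau_R<\sigma$ means the level $R$ is reached before $\sigma$ for every $R$. Hence this event is null. Taking the union over $N$ covers exactly $\{\int_0^{\sigma\wedge T}\|u\|_{X_\theta}^{2/\theta}<\infty,\ \sigma\le T\}$, which gives assertion 2. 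Assertion 3 is identical, using \cref{lemma:ItoLocalEq}(2) and $\|u\|_{W^{1,\infty}}^2$ in place of the $X_\theta$ term.

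The main obstacle I expect is the bookkeeping of stopping times. The localizing sequence $\tilde\sigma_n$ in \cref{lemma:ItoLocalEq} must be matched with the $\tau_R$ constructed in step 1, so that $\sigma_n\uparrow\sigma$ and blow-up of the $H^k$ norm really characterizes $\sigma$. Establishing this equivalence, i.e. that $\sigma$ is exactly the explosion time of $\|u\|_{H^k}$, is the key point, and the tamed construction gives it directly.
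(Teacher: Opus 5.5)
Your proposal is correct and follows essentially the paper's route. For part 1 you patch the tamed solutions $u^R$ using uniqueness of local-in-time solutions. For parts 2 and 3 you stop at the first time the $X_\theta$- (resp.\ $W^{1,\infty}$-) integral reaches $N$, feed \cref{lemma:ItoLocalEq} into \cref{lemma:StochGron} with $A$ and $H$ bounded independently of $n$, let $n\to\infty$, and use that $\|u\|_{H^k}$ blows up at $\sigma$ on $\{\sigma<\infty\}$. Your extra remarks (the maximality argument via the $H^k$-level times of $u^2$, strictness $\tau_R<\sigma$, and absorbing the $\mathcal{F}_0$-measurable constant into $A$) correctly fill in details the paper leaves as ``easily verified''.
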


\begin{remark} Let $\alpha\in(0,1)$. Noting $\|u\|_{W^{s,p}(\T^d)}\leq C \|u(t)\|_{C^\alpha(\T^d)}$ for $p\in[1,\infty)$, $0<s<\alpha<1$ implies that $L_t^\infty C_x^\alpha(\T^d)$ regularity prevents the blow-up of the $H^k(\T^d)$ norm.   
\end{remark}

\begin{remark}[Global existence for large viscosity]\label{rmk:globalMaxiSol}
Under suitable regularity and decay assumptions on $(g_\iota)_{\iota\geq 1}$, there exists a constant $C_g\geq 0$, depending on the coefficients, such that, for every $\nu>C_g$ and every non-negative initial datum $u_0\in L^2(\Omega,\F_0,\PP;H^k(\T^d))$,
the unique maximal solution $(u,\sigma)$ to
\begin{equation*}
\dd u
= -\sum_{\iota\geq 1}\Div\bigl(ug_\iota(u,x)\bigr)\circ\dd W_t^\iota
  +\nu\Delta u\,\dd t,
\qquad u(0)=u_0,
\end{equation*}
is global, that is, $\sigma=\infty$ almost surely. One way to establish this is to derive, for some $p>d$ and every $T>0$, an a priori estimate ensuring that
\[
\sup_{0\leq t<\sigma\wedge T}\|u(t)\|_{W^{1,p}}^p<\infty
\qquad\PP\text{-almost surely}.
\]
The blow-up criterion in \cref{thm:local-in-spaceEquationWP}~2) then applies by setting $X_\theta = W^{s,p}(\T^d)$ for some suitable $s\leq 1$.
\end{remark}

\begin{proof}[Proof of \cref{thm:local-in-spaceEquationWP}]
\textit{1)} By standard arguments we deduce the claim from the well-posedness of the tamed SPDE: For any $R\in\N$ denote 
by $(u^R(t))_{t\geq 0}$ the unique variational solution of the tamed SPDE \eqref{eq:approx-cut-local}. Define
\begin{equation}
         \sigma^R := \inf\{t\geq 0 \mid \|u^R(t)\|_{H^k(\T^d)} \geq R\}.
\end{equation}
By uniqueness of local-in-time solutions ensured by \cref{thm:local-in-spaceEquationCutoffWP}, we have $u^R = u^{R'}$ 
$\PP$-almost surely on $[0,\sigma^{R}]$ for $R\leq R'$. It can be easily verified that $\sigma := \lim_{R\to\infty} \sigma^R$ 
and $u(t\wedge\sigma^R) := u^R(t\wedge\sigma^R)$ define the unique maximal solution.

\textit{2)} We first show  $\PP\left[\int_0^{\sigma\wedge T} \|u(t)\|_{X_\theta}^{2/\theta} \dd t  < \infty, \sigma \leq T\right] = 0$. Define 
\begin{equation*}
\sigma_n := \inf\{t\geq 0 \mid \|u(t)\|_{H^k(\T^d)} \geq n\},  \quad \upsilon_N := \inf\left\{t\in[0,\sigma) \, \bigg| \, \int_0^t  \|u(s)\|_{X_{\theta}}^{2/\theta} \dd s  = N\right\}\wedge\sigma.
\end{equation*}
It suffices to show for any $N\in\N$ and $T>0$
\begin{equation}\label{eq:lemma25-show}
 \PP\left[\sup_{t\in[0,\sigma \wedge T \wedge \upsilon_N)} \|u(t)\|_{H^k(\T^d)} = \infty\right] = 0
\end{equation}
due to
\begin{equation*}
\begin{aligned}
 \PP\left[\int_0^{\sigma\wedge T} \|u(t)\|_{X_{\theta}}^{2/\theta}\dd t  < \infty, \sigma \leq T\right] & = 
\lim_{N\to\infty} \PP\left[\upsilon_N \geq T\wedge \sigma, \sup_{t\in[0,\sigma \wedge T)} \|u(t)\|_{H^k(\T^d)} = \infty\right]  \\
& \leq 
\lim_{N\to\infty} \PP\left[\sup_{t\in[0,\sigma \wedge T\wedge\upsilon_N)} \|u(t)\|_{H^k(\T^d)} = \infty\right].
\end{aligned}
\end{equation*}
We fix some $n, N\in\N$ and apply \cref{lemma:StochGron} to $X_t := \|u(t\wedge\upsilon_N \wedge\sigma_n)\|_{H^k(\T^d)}^2$, $A_t := C_{\theta,g} \int_0^{t\wedge \upsilon_N}(1+\|u\|_{X_\theta})^{2/\theta} ds$ and $H_t:= \|u_0\|_{H^k}^2 +A_t$, yielding  by \cref{lemma:ItoLocalEq} 1)
\begin{equation*}
\begin{aligned}
&\PP\left[\sup_{t\leq T}\|u(t\wedge\upsilon_N \wedge\sigma_n)\|_{H^{k}(\T^d)}^2 \geq z \right]
\leq \frac{\e^R}{z}\E[H_T\wedge w] + \PP[H_T \geq w] + \PP[A_T>R],
\end{aligned}
\end{equation*}
and hence, by taking the limit $n\to\infty$, noting that $H$ and $A$ do not depend on $n$, we observe
\begin{equation*}
\begin{aligned}
& \PP\left[\sup_{t\in[0,\upsilon_N\wedge T]\cap[0,\sigma)}
\|u(t)\|_{H^{k}(\T^d)}^2\geq z \right]  \leq \frac{\e^R}{z}\E[H_T\wedge w] + \PP[H_T \geq w] + \PP[A_T>R]
\end{aligned}
\end{equation*}
As $A_T$ and $H_T$ are $\PP$-a.s. finite, this implies \eqref{eq:lemma25-show}.

\textit{3)} This assertion follows from an proof analogous to that of  \cref{thm:local-in-spaceEquationWP} 2) by applying \cref{lemma:ItoLocalEq} 2) instead of \cref{lemma:ItoLocalEq} 1).
\end{proof}

\subsection{Non-local SPDE: Global well-posedness}\label{subsec:thm:local-in-spaceEquationWP}

Solutions to the  nonlocal equation \eqref{eq:SPDEnonlocal} of lower spatial regularity are known to exist global in time, see \cite{KurtzXiong1999} and \cite[Theorems 5.3 and 5.4]{CoghiGess}. The following lemma ensures solutions that are smooth enough for our analysis.

\begin{lemma}\label{thm:nonlocal-in-spaceEquationWP}
Assume \cref{assumption}, $u_0\in  L^2(\Omega, \F_0, \PP; H^k(\T^d))$ and $\varepsilon\in(0,1)$. Then the following assertions hold:
\begin{enumerate}
\item There exists a unique maximal solution $(u^\varepsilon, \sigma)$ of \eqref{eq:SPDEnonlocal}.
\item If  $u_0 \geq 0$ then $\sigma=\infty$.
\item If  $u_0 \geq 0$  and $\|u_0\|_{L^1} \in L^\infty(\Omega)$, then for any $T>0$ the SPDE \eqref{eq:SPDEnonlocal} has a  variational solution $(u^\varepsilon(t))_{t\in[0,T]}$.
\end{enumerate}
\end{lemma}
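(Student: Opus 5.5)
For part 1) I will copy the argument of \cref{thm:local-in-spaceEquationWP} 1), now based on \cref{thm:nonlocal-in-spaceEquationCutoffWP}. For each $R\in\N$ let $\uer$ be the unique variational solution of \eqref{eq:approx-cut-nonlocal} and set $\sigma^R:=\inf\{t\geq 0\mid \|\uer(t)\|_{H^k}\geq R\}$. On $[0,\sigma^R]$ we have $\gamma_R\equiv 1$, so $\uer$ is a local-in-time solution of \eqref{eq:SPDEnonlocal}. Uniqueness of local-in-time solutions of the tamed equation gives $\uer=u^{\varepsilon,R'}$ on $[0,\sigma^R]$ for $R\le R'$. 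I then put $\sigma:=\lim_R\sigma^R$ and $\ue(t\wedge\sigma^R):=\uer(t\wedge\sigma^R)$. Maximality and uniqueness follow as before: any other local solution $(X^2,\sigma_2)$, stopped when its $H^k$-norm reaches $R$, is a local solution of the tamed equation. Hence it coincides with $\uer$ there, and therefore $\sigma_2\le\sigma$.

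For part 2) I want a blow-up criterion combined with an a priori bound. The decisive fact is that positivity and conservation hold. Since $\Div(\ue g_\iota)$ is in divergence form and the drift is a divergence as well, $\intT \ue(t)=\intT u_0$. By a maximum principle argument as in \eqref{eq:Lemma55-proof-1}, applied with $\varphi(u)=f(-u)$ and the Itô formula of \cite[Proposition A.1]{Ito-DebusscheHofmanovaVovelle}, we get $\ue\ge0$ on $[0,\sigma)$. The Itô correction terms again cancel, because the noise is transport-type and $\Div g_\iota=0$. Hence $\|\ue(t)\|_{L^1}=\|u_0\|_{L^1}$, and so
\[
\|\ue(t)\ast\eta_\varepsilon\|_{C^j}\le C_{\varepsilon,j}\|u_0\|_{L^1}.
\]
This means that all coefficients $g_\iota(\ue\ast\eta_\varepsilon)$ and their derivatives up to order $k+2$ are bounded by an $\F_0$-measurable random constant, uniformly in $t<\sigma$. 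With this, the energy estimate from the proof of \cref{lemma:BoringEstimate} for $R=\infty$ becomes linear:
\[
\|\ue(t\wedge\sigma_n)\|_{H^k}^2\le\|u_0\|_{H^k}^2+C_{\varepsilon}(\|u_0\|_{L^1})\int_0^{t\wedge\sigma_n}\|\ue\|_{H^k}^2\,\dd s+M_{t\wedge\sigma_n}.
\]
The cancellation of the top-order terms $\pm\langle \gamma^2 g_\iota(\partial^\alpha\grad v) g_\iota,\partial^\alpha\grad v\rangle$ is used exactly as written there. \cref{lemma:StochGron}, applied with $A_t=C_\varepsilon t$ and $H_t=\|u_0\|_{H^k}^2$, then yields $\sup_{t<\sigma\wedge T}\|\ue\|_{H^k}<\infty$ a.s. Since $\sigma=\lim\sigma^R$ and on $\{\sigma\le T\}$ the $H^k$-norm must reach every $R$ before $\sigma$, we obtain $\PP[\sigma\le T]=0$ for all $T$, i.e.\ $\sigma=\infty$. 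A preliminary localization $u_0\one_{\{\|u_0\|_{H^k}\le N\}}$ handles integrability.

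For part 3) I additionally assume $\|u_0\|_{L^1}\le K$ deterministically. Then the constant $C_\varepsilon(K)$ above is deterministic. Taking expectations with BDG, as in the proof of \cref{cor:LiuRoeckner}, gives $\E\sup_{t\le T}\|\ue\|_{H^k}^2+\E\int_0^T\|\ue\|_{H^{k+1}}^2\,\dd t<\infty$. This is exactly the integrability required in the definition of a variational solution on $[0,T]$. Alternatively, one can note that the coefficients with $g_\iota$ replaced by cutoff versions, which agree on $[-C_\varepsilon K, C_\varepsilon K]$, satisfy (H1)--(H4) with deterministic constants. I expect the main obstacle to be part 2): rigorously justifying positivity and the $L^1$-conservation at the level of $H^k$ variational solutions with the nonlocal Itô correction (second drift term). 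I would first try the Itô formula for $\intT f(-\ue)$, using that $f'(-\ue)=0$ wherever $\ue\ge0$ to kill the correction and noise terms. The remainder is routine given \cref{lemma:BoringEstimate}.
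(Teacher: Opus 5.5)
Parts 1) and 3) of your proposal agree with the paper. So does the second half of 2), where the $L^1$ bound makes the estimate of \cref{lemma:BoringEstimate} linear and \cref{lemma:StochGron} is applied.

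The gap is the positivity step. You rightly call it the crux, but you justify it with a false claim: for the nonlocal equation the Itô--Stratonovich correction does \emph{not} cancel against the quadratic variation. Write $u=u^\varepsilon$, $g_\iota=g_\iota(u\ast\eta_\varepsilon)$, $g_\iota'=g_\iota'(u\ast\eta_\varepsilon)$, and use $\Div(ug_\iota)=g_\iota\cdot\nabla u+u\,g_\iota'\cdot\nabla(u\ast\eta_\varepsilon)$. The first correction term does cancel the $g_\iota\cdot\nabla u$ part of the quadratic variation. However, Itô's formula for $\int_{\T^d}\varphi(u)$ still leaves the remainder
\[
\frac12\sum_{\iota\geq1}\int_{\T^d}\varphi''(u)\,u\,\Big[\big(g_\iota'\cdot\nabla(u\ast\eta_\varepsilon)\big)\Div(ug_\iota)-\big(g_\iota'\cdot\nabla u\big)\big(\Div(ug_\iota)\ast\eta_\varepsilon\big)\Big]\,\dd x .
\]
This vanishes in the local case $\eta_\varepsilon=\delta_0$, but not in general, and it has no sign. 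Hence the argument behind \eqref{eq:Lemma55-proof-1} (namely $\int\varphi(u(t))\le\int\varphi(u_0)+M_t$, followed by Lenglart) does not go through. Noting that $\varphi'(u)=0$ on $\{u\geq0\}$ does not remove the remainder either, since it lives exactly on $\{u<0\}$.

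What is missing is a Gronwall argument with a specially chosen test function. Take $f\in C^2$ with bounded $f''$, $f=0$ on $[0,\infty)$ and $f>0$ on $(-\infty,0)$. In addition, require $|f''(z)z^2|\le Cf(z)$ and $|\int_0^z f''(y)y\,\dd y|\le Cf(z)$. The paper uses $f''(z)=-z(z+2)$ on $[-2,0]$ and $f''=0$ elsewhere; a generic convex $f$ as in \eqref{eq:Lemma55-proof-1} need not satisfy these bounds.

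Since the remainder carries the factor $u\,f''(u)$, you can write $f''(u)u\nabla u=\nabla F(u)$ with $F(z)=\int_0^z f''(y)y\,\dd y$ and integrate by parts. Each remainder term is then bounded by $C_{\varepsilon,n}\int_{\T^d}f(u)$. For this you use $\|u\|_{H^k}\le n$ before $\sigma_n$, which gives uniform $W^{1,\infty}$ bounds, and you move any further derivatives onto $\eta_\varepsilon$. Gronwall then yields $\E\big[\int_{\T^d}f(u(t\wedge\sigma_n))\one_{\{\|u_0\|_{H^k}<N\}}\big]=0$, i.e.\ $u\geq0$. Only after this is $\|u(t)\|_{L^1}=\|u_0\|_{L^1}$ available for the rest of your proof.
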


\begin{proof}
\textit{1)}
For this proof consider a fixed choice of $\varepsilon\in(0,1)$, $T>0$ and $u_0$. By standard arguments we deduce the existence of a unique maximal solution from the well-posedness of the tamed SPDE, as done in \cref{thm:local-in-spaceEquationWP} 1).

\textit{2) and 3)} We denote this maximal solution by $(u^\varepsilon, \sigma)$ and assume $u_0\geq 0$. We use the notation $\sigma_n := \inf\{t\geq 0 \mid \|u^\varepsilon(t)\|_{H^k(\T^d)} \geq n\}$. We shorten the notation by setting
\begin{equation*}
u_n(t) := u^\varepsilon(t\wedge\sigma_n) \qquad \forall t\in[0,T], n\in\N,
\end{equation*}
dropping the dependence on $\varepsilon>0$ in the notation.

We first prove $\|u_n(t)\|_{L^1(\T^d)} = \|u_0\|_{L^1(\T^d)}$ a.s. for every $t\in[0,T]$. It suffices to show that $u_0 \geq 0$ implies that  $u_n(t) \geq 0$ for any $t$: Define $f\in C^2(\R,\R)$ by the following properties:
\begin{equation*}
f(0) = 0, \quad f'(0) =0 \quad \text{and} \quad 
f''(z) := \begin{cases} -z(z+2) \quad  & -2\leq z \leq 0 \\ 0 & \text{else}.
\end{cases}
\end{equation*}
In particular, $f(u_0) = 0$, $f\geq 0$, $f(z)>0$ for all $z<0$, $f$ has a bounded second derivative, and satisfies for all $z\in\R$ the inequalities $|f''(z)z^2| \leq 12 f(z)$ and $|\int_0^z f''(y)y dy| \leq 3f(z)$. By $k > d/2 + 1$ we have $\|u_n(t)\|_{W^{1,\infty}(\T^d)} \leq C\|u_n(t)\|_{H^k(\T^d)} \leq C n + C\|u_0\|_{H^k(\T^d)}$ for all $t\in[0,T]$. We apply Ito's formula, use cancellations between the Ito to Strato term and the quadratic variation, integration by parts and the two inequalities for $f$, to obtain:
\begin{equation*}
\begin{aligned}
\intT f(u_n(t)) 
& = \intT f(u_0)  - \inttn\intT f''(u_n) |\grad u_n|^2 \dd s   + M_t 
\\
& \quad - \frac{1}{2} \inttn \intT f''(u_n) \grad u_n \cdot 
\sumi  u_n g_\iota'(u_n\falt) \Div(u_n g_\iota(u_n\falt))\falt \dd s  \\
& \quad + \frac{1}{2} \sumi \inttn \intT f''(u_n) 
u_n g'_\iota(u_n\falt) \cdot \grad u_n\falt \Div(u_n g_\iota(u_n\falt))\dd s
\\
& 
\leq  \intT f(u_0) + M_t 
\\
& \quad + \frac{1}{2} \inttn \intT 3f(u_n) \left\|
\sumi  \Div(g_\iota'(u_n\falt) \Div(u_n g_\iota(u_n\falt))\falt)\right\|_{L^\infty(\T^d)}\dd s  \\
& \quad + \frac{1}{2} \inttn \intT 3 f(u_n)  \left\|\sumi 
 \Div(g'_\iota(u_n\falt) \cdot \grad u_n\falt g_\iota(u_n\falt)) \right\|_{L^\infty(\T^d)} \dd s\\
 & \quad + \frac{1}{2} \inttn \intT 12 f(u_n)  \left\|\sumi 
 g'_\iota(u_n\falt) \cdot \grad u_n\falt \Div(g_\iota(u_n\falt)) \right\|_{L^\infty(\T^d)} \dd s\\
& \leq  \intT f(u_0) + M_t  + \int_0^{t\wedge\sigma_n}\intT f(u_n) \dd A_t,
\end{aligned}
\end{equation*}
where
\begin{equation*}
\begin{aligned}
M_t & := - \sumi \int_0^{t\wedge\sigma_n} \intT f'(u_n) \Div(u_n g_\iota(u_n\falt)))  \dd W^\iota(s), \\
C_g &:= \sumi \|g_\iota\|^2_{C^2([-Cn + \|u_0\|_{H^k(\T^d)}, Cn + \|u_0\|_{H^k(\T^d)}]\times\T^d)},\\
A_t &:= C_\varepsilon (Cn + \|u_0\|_{H^k(\T^d)})^2 C_g t. \\
\end{aligned}
\end{equation*}
Here $C_\varepsilon$ denotes a finite constant that depends on $\varepsilon$ and $\eta$.
We obtain by Gronwall's inequality \\
$\E[\int_{\T^d} f(u_n(t)) \one_{\{\|u_0\|_{H^k(\T^d)}<N\}}] = 0$ for all $N\in\N$ and $t\in[0,T]$, implying $u_n(t) \geq 0$ $\PP$-almost surely for all $t\in[0,T]$.

By Ito's formula, \cref{lemma:BoringEstimate} and $\|u_n(t)\|_{L^1(\T^d)} = \|u_0\|_{L^1(\T^d)}$ we obtain:
\begin{equation}\label{eq:lemma23}
\|u^\varepsilon(t\wedge\sigma_n)\|_H^2  + \int_0^{t\wedge\sigma_n} \|u^\varepsilon(s)\|^2_V\dd s  \leq \|u_0\|_H^2 + C \int_0^t \|u^\varepsilon (s\wedge\sigma_n)\|_H^2 \dd s  +  M_t
\end{equation}
for $C :=1+ C_{d,\varepsilon,1} \left(\left(1+\sumi \|g_\iota\|_{C^{k+2}(M_\varepsilon)}^2\right) (1+\|u_0\|_{L^1(\T^d)})^{k+2} \right)^2$ and \\$M_\varepsilon :=  [-\|u_0\|_L^1 \|\eta_{\varepsilon,per}\|_{L^\infty}, \|u_0\|_L^1 \|\eta_{\varepsilon,per}\|_{L^\infty}]\times\T^d$. We observe that $C$ is $\F_0$-measurable and $\PP$-almost surely finite.

We apply e.g. \cref{lemma:StochGron} which yields for all $w>0$, $n\in\N$, $R>0$
\begin{equation*}
\PP[\sigma_n \leq T] = \PP\left[\sup_{t\in[0,T]} \|u_n(t)\|^2_H  \geq  n^2 \right] \leq \frac{\e^R}{n^2} \E\left[\|u_0\|_H^2\wedge w\right] + \PP[\|u_0\|^2_H\geq w] + \PP[ CT > R]
\end{equation*}
which implies $\PP[\sigma \leq T] = 0$, ensuring hence $\sigma > T$ almost surely.
If $\|u_0\|_{L^1(\T^d)} \in L^\infty(\Omega, \F_0, \PP)$, then we obtain by \eqref{eq:lemma23} and Gronwall's inequality $\hat{u}^\varepsilon \in L^2([0,T]\times\Omega, \dd t\otimes\PP, H^{k+1}(\T^d))$, and hence $(u^\varepsilon(t))_{t\in[0,T]}$ is a variational solution.
\end{proof}

\section{Estimates uniform in $\varepsilon>0$ for the nonlocal SPDEs}
\label{sec:5}
The following two lemmata provide estimates which are uniform in $\varepsilon>0$ for the tamed nonlocal-in-space SPDE. Later these will be applied to prove that $(\uer)_{\varepsilon >0}$ has a convergent subsequence via a compactness argument.

\begin{lemma}\label{lemma:H^k-uniform-in-eps}
Assume \cref{assumption} and $u_0 \in L^2(\Omega, \F_0, \PP; H^{k+1}(\T^d))$. Let $(\uer(t))_{t\in[0,T]}$ denote the variational solution of \eqref{eq:approx-cut-nonlocal} for some choice of $R>0$, $\varepsilon\in(0,1)$ and $T>0$. Then, there exists a constant
$K<\infty$ depending on $R$ and $d$, and further constants $C_{1,R}<\infty$, $C_{2,R}<\infty$ depending on $R$, $C_g := \sumi \|g_\iota\|^2_{C^{k+2}([-K,K]\times\T^d, \R^d)}$,$d$ and $\eta$, but which do not depend on $\varepsilon>0$, such that $\PP$-a.s. for all $t\in[0,T]$
\begin{equation*}
\begin{aligned}
& \|\uer(T)\|_{\dot{H}^{k+1}(\T^d)}^2 + \frac{1}{2} \int_0^T \|\uer(t)\|_{\dot{H}^{k+2}(\T^d)}^2 \dd t  \\ 
& \qquad \leq  \|u_0\|_{\dot{H}^{k+1}(\T^d)}^2  + T C_{1,R}   +  C_{2,R} \int_0^T \|\uer(t)\|_{\dot{H}^{k+1}(\T^d)}^2 \dd t +   M^{(\varepsilon)}_T,
\end{aligned}
\end{equation*}
where $(M^{(\varepsilon)}_t)_{t \in[0,T]}$ denotes a continuous local martingale with $M^{(\varepsilon)}_0=0$. Moreover, we have for all $v>0$

\begin{equation*}
\begin{aligned}
& \PP\left[\sup_{t\in[0,T]} \|\uer(t)\|_{H^{k+1}(\T^d)}^2 + \frac{1}{2} \int_0^T \|\uer(t)\|_{H^{k+2}(\T^d)}^2 \dd t \geq 2v \right] \\
& \qquad \leq \frac{\e^{(C_{2,R}+1)T}}{v}\left(T C_{1,R}+ \E[\|u_0\|_{H^{k+1}(\T^d)}^2] +(R+1)^2\right).
\end{aligned}
\end{equation*}
\end{lemma}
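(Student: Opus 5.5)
The plan is to apply It\^o's formula to $\|\partial^{k+1}_\beta \uer\|_{L^2}^2$ for each multi-index $\beta$ with $|\beta|=k+1$, and sum over $\beta$. This is justified because, by \cref{thm:nonlocal-in-spaceEquationCutoffWP}, $\uer$ is a continuous $H^{k+1}$-valued process with $\hat u^{\varepsilon,R}\in L^2([0,T]\times\Omega;H^{k+2})$, so the variational It\^o formula holds in the triple $H^{k+2}\subset H^{k+1}\subset (H^{k+2})^*$.

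\textbf{Step 1: bounds uniform in $\varepsilon$ on the support of $\gamma_R$.} Every nonlinear term carries a factor $\gamma_R$ or $\gamma_R^2$, so it vanishes unless $\|\uer\|_{H^k}\le R+1$. On that set the Sobolev embedding ($k>d/2+1$) gives $\|\uer\|_{W^{1,\infty}}\le C_d(R+1)$. Since $\eta\ge0$ and $\int\eta=1$, the same bound holds for $\uer\ast\eta_\varepsilon$, and more generally $\|\partial^j(\uer\ast\eta_\varepsilon)\|_{L^p}\le\|\partial^j\uer\|_{L^p}$, independently of $\varepsilon$. This fixes $K=C_d(R+1)$ and $C_g$, and these constants then control every coefficient uniformly in $\varepsilon$.

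\textbf{Step 2: cancellation of the top-order terms.} Because $\Div g_\iota=0$, we can write
$$\Div(ug_\iota(u\ast\eta_\varepsilon))=g_\iota\cdot\nabla u+u\,g_\iota'\cdot\nabla(u\ast\eta_\varepsilon).$$
Expanding the It\^o correction, the quadratic variation and $\Delta$, the terms involving two derivatives of order $k+2$ are of three kinds.
\begin{enumerate}
\item[(a)] The term $\pm\gamma_R^2\int|g_\iota\cdot\nabla\partial^\beta u|^2$ cancels exactly between the first drift term and the quadratic variation, as in \cref{lemma:ItoLocalEq}.
\item[(b)] There are terms of the form $\int(u g_\iota'\cdot\nabla\partial^\beta u)\,\big((g_\iota\cdot\nabla\partial^\beta u)\ast\eta_\varepsilon\big)$ from the second drift term, and the cross term $\int(g_\iota\cdot\nabla\partial^\beta u)\,\big(u g_\iota'\cdot(\nabla\partial^\beta u\ast\eta_\varepsilon)\big)$ from the quadratic variation.
\item[(c)] There is a term quadratic in $u g_\iota'\cdot\nabla\partial^\beta(u\ast\eta_\varepsilon)$ from the quadratic variation, together with its counterpart in the second drift term.
\end{enumerate}
For (b) and (c), I would use the symmetry of $\eta$ to move the convolution to the other factor. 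The resulting mismatch is a commutator $[\eta_\varepsilon\ast,a]\partial_i f$ with $a\in\{u g_\iota',g_\iota\}$. Friedrichs' commutator lemma gives
$$\|[\eta_\varepsilon\ast,a]\partial_i f\|_{L^2}\le C_\eta\|\nabla a\|_{L^\infty}\|f\|_{L^2},$$
uniformly in $\varepsilon$. After this exchange the top-order contributions cancel exactly, leaving products in which only one factor has order $k+2$. This commutator and cancellation bookkeeping is the main obstacle. If a plain cancellation fails for (c), I would try to exhibit (c) as $-\tfrac12\|\cdot\|^2$ plus commutator errors, i.e.\ a nonpositive term up to lower order.

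\textbf{Step 3: lower-order terms and the pathwise estimate.} Every remaining term pairs $\nabla\partial^\beta u$ (order $k+2$) with a product $P$ of derivatives of $u$, of $u\ast\eta_\varepsilon$ and of $g_\iota$, of total order at most $k+2$ and with each factor of order at most $k+1$. By Young's inequality and Gagliardo--Nirenberg as in \cref{rmk:simpleInequality}, with $\|u\|_{H^k}\le R+1$ on the support of $\gamma_R$, we get
$$\|P\|_{L^2}^2\le C_R(1+\|u\|_{\dot H^{k+1}}^2)+\delta\|u\|_{\dot H^{k+2}}^2 .$$
Absorbing the $\delta$-terms into $-2\|u\|_{\dot H^{k+2}}^2$ from the Laplacian leaves $\tfrac12\int\|u\|_{\dot H^{k+2}}^2$ on the left, which gives the first inequality. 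The martingale $M^{(\varepsilon)}$ is the sum over $\iota$ and $\beta$ of the stochastic integrals $\int\gamma_R\langle\partial^\beta u,\partial^\beta\Div(u g_\iota)\rangle\,\dd W^\iota$.

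\textbf{Step 4: the probability estimate.} Lower norms are controlled by the structure of the tamed equation: when $\|\uer\|_{H^k}>R+1$ the equation reduces to the heat equation, which does not increase the $H^k$ norm. Hence, writing $\|\cdot\|_{H^{k+1}}^2=\|\cdot\|_{H^k}^2+\|\cdot\|_{\dot H^{k+1}}^2$,
$$\sup_t\|\uer(t)\|_{H^k}\le\max(\|u_0\|_{H^k},R+1),\qquad \|\cdot\|_{H^{k+1}}^2\le\|\cdot\|_{\dot H^{k+1}}^2+\|u_0\|_{H^{k+1}}^2+(R+1)^2 .$$
The same bound controls the lower-order part of $\|\cdot\|_{H^{k+2}}$. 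Finally I would apply a Gronwall and Lenglart-type argument, in the form of \cref{lemma:StochGron}, with $A_t=(C_{2,R}+1)t$ deterministic and $H_T=TC_{1,R}+\|u_0\|_{H^{k+1}}^2+(R+1)^2$. Choosing $R'=(C_{2,R}+1)T$ and letting $w\to\infty$, this yields the stated bound $\frac{\e^{(C_{2,R}+1)T}}{v}(\cdots)$.
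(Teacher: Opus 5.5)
Your proposal is correct and follows essentially the paper's proof: It\^o's formula for $\partial^{k+1}_\alpha\uer$, exact cancellation of the $g_\iota$--$g_\iota$ top-order terms, symmetry of $\eta$ plus a commutator bound for the other top-order terms, H\"older--Sobolev bounds on the support of $\gamma_R$ for lower-order products, the bound $\|\uer(t)\|_{H^k}\le\max\{\|u_0\|_{H^k},R+1\}$, and \cref{lemma:StochGron} with $A_t=(C_{2,R}+1)t$; the paper's pointwise estimate $|(vw)\ast\tilde\eta_\varepsilon-(v\ast\tilde\eta_\varepsilon)w|\le C\varepsilon\,(|v|\ast|\tilde\eta_\varepsilon|)\|Dw\|_{L^\infty}$ with $\tilde\eta=\partial_i\eta$ is just an inline proof of the Friedrichs lemma you quote. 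For the bookkeeping you flag, note that the first drift term contributes not only $-\tfrac12\gamma_R^2\|g_\iota\cdot\nabla\partial^\beta\uer\|^2$ but also $-\tfrac12\gamma_R^2\langle a,b\rangle$, with $a:=g_\iota\cdot\nabla\partial^\beta\uer$ and $b:=\uer g_\iota'\cdot(\nabla\partial^\beta\uer\ast\eta_\varepsilon)$ (the paper keeps all of $g_\iota\,\partial^\beta\Div(\uer g_\iota)$ in the cancelling pair `Strato to Ito (I)'/`Ito quadratic variation (I)'); with this term included, all top-order contributions combine to the single expression $\tfrac12\gamma_R^2\langle a+b,\,b-(\uer g_\iota'\cdot\nabla\partial^\beta\uer)\ast\eta_\varepsilon\rangle$, which is a commutator paired with a top-order factor, so your type (c) terms need no fallback.
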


In the proof of \cref{lemma:H^k-uniform-in-eps} we will use the following estimate several times; it bounds products of partial derivatives of order $\leq k+1$.
\begin{remark}\label{rmk:trivalEstimateI}
Let $k$, $d$ and $\alpha$ be as in \cref{assumption}, $\beta_1, \ldots, \beta_n\in\N^d_0$, $l_m := |\beta_m|\leq k+1$, $l_1 + \ldots + l_n \leq k+2$, $u\in H^{k+1}(\T^d)$, $f_1, f_2 \in L^\infty(\T^d)$, $v_m \in\{u, u\ast \eta_\varepsilon\}$ for all $m\in\{1, \ldots, n\}$ and $n\geq 1$.
Then we have
\begin{equation}\label{eq:trivalEstimateI}
\begin{aligned}
&\|\partial^{l_1}_{\beta _1}v_1\partial^{l_2}_{\beta_2}v_2
\ldots (\partial^{l_i}_{\beta_i}v_i \cdots \partial^{l_n}_{\beta_n}v_n f_1)\ast \eta_\varepsilon  f_2 \|_{L^2(\T^d)} \\
& \leq C \|u\|_{H^{k+1}(\T^d)}\|u\|^{n-1}_{H^{k}(\T^d)}\|f_1\|_{L^\infty(\T^d)} \|f_2\|_{L^\infty(\T^d)}  \\
\end{aligned}
\end{equation}
for some constant $C$ that only depends on $n$ and $d$. For $i>n$ we use the notation  $(\partial^{l_i}_{\beta_i}v_i \cdots \partial^{l_n}_{\beta_n}v_n f_1)\ast \eta_\varepsilon:=1$ and $\|f_1\|_{\infty}:=1$.

That \eqref{eq:trivalEstimateI} holds can be seen as follows:  Assume w.l.o.g. that $l_1 \geq l_m$ for all  $1<m\leq n$ (by relabeling if necessary). We first show the assertion under the additional assumptions $n\geq 2$, $3\leq l_1$ and $2 \leq l_m$ for all $m>1$: In this case we have by Hölder's inequality, Young's convolution inequality, the Sobolev inequality and $\|\eta_{\varepsilon,per}\|_{L^1(\T^d)}= \|\eta\|_{L^1(\R^d)} = 1$
\begin{equation*}
\begin{aligned}
&\|\partial^{l_1}_{\beta _1}v_1\partial^{l_2}_{\beta_2}v_2
\ldots (\partial^{l_i}_{\beta_i}v_i \cdots \partial^{l_n}_{\beta_n}v_n f_1)\ast \eta_\varepsilon  f_2\|_{L^2(\T^d)} \\
& \leq \|\eta_\varepsilon\|_{L^1(\R^d)} \|\partial^{l_1}_{\beta _1}v_1\|_{L^{q_1}(\T^d)}
\|\partial^{l_2}_{\beta_2}v_2\|_{L^{q_2}(\T^d)} \cdots \|\partial^{l_n}_{\beta_n}v_n\|_{L^{q_n}(\T^d)} \|f_1\|_{L^\infty(\T^d)} \|f_2\|_{L^\infty(\T^d)} \\
& \leq C \|u\|_{H^{k+1}(\T^d)}\|u\|^{n-1}_{H^{k}(\T^d)}\|f_1\|_{L^\infty(\T^d)} \|f_2\|_{L^\infty(\T^d)} \\
\end{aligned}
\end{equation*}
for some constant $C$ depending on $d$, where we choose $q_m\in[2,\infty)$ such that $q_1^{-1} + \ldots + q_n^{-1} =\frac{1}{2}$ and $\frac{1}{q_m}\geq \frac{1}{\tilde{q}_m} := \frac{1}{2} + \frac{l_m-(k+1)}{d} = \frac{l_m-\alpha -1}{d}\in [0,\frac{1}{2}]$ for $m=1$ and $\frac{1}{q_m}\geq \frac{1}{\tilde{q}_m} := \frac{1}{2} + \frac{l_m-k}{d} = \frac{l_m-\alpha}{d} \in [0,\frac{1}{2}]$ for $m>1$. We can find such $q_m$ using $n\geq 2$, $k= d/2+\alpha$ and $\alpha >1$ due to:
\begin{equation*}
 \sum_{m=1}^n \frac{1}{\tilde{q}_m} =  \sum_{m=1}^n \bigg(\frac{l_m -\alpha}{d}\bigg)  - \frac{1}{d} \leq \frac{k+1- n\alpha}{d} = \frac{1 + d/2 + \alpha - n\alpha}{d} < 1/2. 
\end{equation*}

Now we show that we can remove the additional assumptions: Clearly assertion \eqref{eq:trivalEstimateI} also holds true for $n=1$.
Terms $\partial_{\beta_m}^{l_m} v_m$, $l_m\leq 1$ can be pulled out of the norm using $\|u\|_{W^{1,\infty}(\T^d)} \leq C \|u\|_{H^{k}(\T^d)}$, allowing us to remove the extra assumption  $l_m\geq 2$ for all $m>1$. To prove the assertion for $l_1 \leq 2$, it suffices to show the claim for the case $l_m=2$ for all $m$. In this case, choose $q_m = 2n$ for all $m$, which yields
\begin{equation*}
\begin{aligned}
\|\partial^{2}_{\beta _1}v_1\partial^{2}_{\beta_2}v_2
\ldots (\partial^{2}_{\beta_i}v_i \cdots \partial^{2}_{\beta_n}v_n f_1)\ast \eta_\varepsilon  f_2\|_{L^2(\T^d)} 
 \leq \|u\|_{\dot{W}^{2,2n}(\T^d)}^n  \|f_1\|_{L^\infty(\T^d)} \|f_2\|_{L^\infty(\T^d)} \\
\end{aligned}
\end{equation*}
In the case of $\alpha=2$ we have $\|u\|_{W^{2,q}(\T^d)} \leq C\|u\|_{H^{k}(\T^d)}$ for all $2\leq q<\infty$ which implies the claim. For $\alpha = 1.5$ we have $\|u\|_{W^{2,q}(\T^d)} \leq C \|u\|_{H^{k}(\T^d)}$ for all $q\in[2,2d]$ for $\alpha = 1.5$. Thus, for $\alpha = 1.5$ and $d\geq 3$ we have $2n \leq k+2 = d/2 + 3.5 \leq 2d$ which yields the assertion.
For $\alpha = 1.5$ and $d=1$ we have $k=2$, $n\leq 2$. For $n=1$ the assertion is clear; for $n=2$ we use
\begin{equation*}
\begin{aligned}
\|\partial^{2}_{\beta _1}v_1\partial^{2}_{\beta_2}v_2
\ldots (\partial^{2}_{\beta_i}v_i \cdots \partial^{2}_{\beta_n}v_n f_1)\ast \eta_\varepsilon  f_2\|_{L^2(\T^d)} 
 &\leq \|\partial^{2}_{\beta _1}v_1\|_{L^\infty(\T^d)} \|\partial^{2}_{\beta_2}v_2\|_{L^2} \|f_1\|_{L^\infty(\T^d)} \|f_2\|_{L^\infty(\T^d)} \\
 &\leq C \|u\|_{H^{k+1}} \|u\|_{H^{k}} \|f_1\|_{L^\infty(\T^d)} \|f_2\|_{L^\infty(\T^d)} .
\end{aligned}
\end{equation*}
\end{remark}

\begin{proof}[Proof of \cref{lemma:H^k-uniform-in-eps}]
We fix some $\varepsilon>0$, $R<\infty$ and $T>0$. We assume w.l.o.g. that $\|g_\iota\|_{C^{k+2}} \neq 0$ for all $\iota\geq 1$ and that $\cup_{\iota \geq 1}\textrm{supp}(g_\iota)\subset [-K,K]\times \T^d$ for some $K<\infty$ which depends on $R$ and $d$ but not on $\varepsilon$. For the proof shorten the notation by $u:=\uer$, $g_\iota := g_\iota(\uerp\falt)$ and $g'_\iota := g'_\iota(\uerp\falt)$ except in the notation $\|g_\iota\|_{C^{k+2}} := \|g_\iota\|_{C^{k+2}(\R\times\T^d, \R^d)}$.  We fix some $\alpha\in\N_{0}^d$ with $|\alpha|=k+1$ and apply  the Ito formula \cite[Proposition A.1]{Ito-DebusscheHofmanovaVovelle} to $\partial^{k+1}_\alpha u$, for which we have sufficient regularity by \cref{thm:nonlocal-in-spaceEquationCutoffWP} and \cref{lemma:BoringEstimate}. This gives for all $t\in[0,T]$
\begin{equation}\label{eq:uniform-in-eps-Hk+1-ito}
\begin{aligned}
\frac{1}{2} \|\partial^{k+1}_\alpha u(t)\|^2_{L^2(\T^d)} & = \frac{1}{2} \|\partial^{k+1}_\alpha u_0\|^2_{L^2(\T^d)}  \\
& \quad - \frac{1}{2} \int_0^t   \intT (\grad \partial^{k+1}_\alpha \up) \cdot \sum_{\iota \geq 1}\gRp^2 \partial^{k+1}_\alpha (g_\iota \Div(\up g_\iota))\dd s \\
& \quad - \frac{1}{2} \int_0^t \intT (\grad \partial^{k+1}_\alpha \up)\cdot  \sum_{\iota \geq 1} \gRp^2   \partial^{k+1}_\alpha
(\up g'_\iota \Div(\up g_\iota)\ast\eta_\varepsilon)\dd s \\
& \quad
-   \int_0^t\intT |\grad \partial^{k+1}_{\alpha} \up|^2 \dd s +  M^{(\varepsilon)}_t \\
& \quad 
+ \frac{1}{2}\sum_{\iota \geq 1}    \int_0^t \gRp^2 \intT |\partial^{k+1}_\alpha \Div(\up g_\iota)|^2 \dd s
\end{aligned}
\end{equation}
where 
\begin{equation}\label{eq:uniform-in-eps-Hk+1-mart}
M^{(\varepsilon)}_t := - \int_0^t  \sum_{\iota \geq 1} \gRp \intT \partial^{k+1}_\alpha \up \partial^{k+1}_\alpha \Div(\up g_\iota)) dW^\iota(s)
\end{equation}
which is a local martingale with $M^{(\varepsilon)}_0=0$. There are no terms that contain derivatives of order $>k+2$. We collect the terms which contain a product of two partial derivatives of order $k+2$, and interchange sums and integrals by Fubini:
\begin{equation*}
\begin{aligned}
& \frac{1}{2} \|\partial^{k+1}_\alpha \up (t)\|^2_{L^2(\T^d)} \\
& \quad = \frac{1}{2} \|\partial^{k+1}_\alpha u_0\|^2_{L^2(\T^d)}   + M^{(\varepsilon)}_t   + T^{\text{lower-order}}_t&&\\
& \qquad - \frac{1}{2} \int_0^t \sumi  \gRp^2   \intT (\grad \partial^{k+1}_\alpha \up) \cdot  g_{\iota} \partial^{k+1}_\alpha \Div(\up g_{\iota}) \dd s
&&
\text{Strato to Ito (I)}\\
& \qquad-\frac{1}{2} \int_0^t\sumi \gRp^2   \intT (\grad \partial^{k+1}_\alpha \up) \cdot
(\up g'_{\iota} \{\partial^{k+1}_\alpha \grad \up \cdot g_{\iota} \}\ast\eta_\varepsilon) \dd s  && \text{Strato to Ito (II)}\\
& \qquad-\frac{1}{2} \int_0^t\sumi \gRp^2   \intT (\grad \partial^{k+1}_\alpha \up)
\cdot (\up g'_{\iota} \{(\partial^{k+1}_\alpha \grad \up\falt) \cdot \up g'_{\iota}\}\ast\eta_\varepsilon) \dd s
&& \text{Strato to Ito (III)} \\
&\qquad + \frac{1}{2} \int_0^t \sumi \gRp^2 \intT(\partial^{k+1}_\alpha \grad \up)\cdot  g_{\iota} \partial^{k+1}_\alpha \Div(\up g_{\iota})  \dd s && \text{Ito quadr. variation (I)}\\
&\qquad + \frac{1}{2}\int_0^t \sumi \gRp^2 \intT (\partial^{k+1}_\alpha \grad \up \falt)   \cdot \up g'_{\iota} (\partial^{k+1}_\alpha\grad \up \cdot g_{\iota})  \dd s
&& \text{Ito quadr. variation (II) }\\
& \qquad+  \frac{1}{2}\int_0^t \sumi \gRp^2 \intT (\partial^{k+1}_\alpha \grad \up \falt)    \cdot  \up g'_{\iota} (\partial^{k+1}_\alpha \grad  \up\ast \eta_\varepsilon\cdot  \up g'_{\iota}) \dd s
&& \text{Ito quadr. variation (III)}\\
& \qquad -  \int_0^t \intT  |\grad \partial^{k+1}_{\alpha} \up |^2  \dd s   && \text{Laplacian}\\
\end{aligned}
\end{equation*}
The terms 'Strato to Ito (I)' and  'Ito quadratic variation (I)' cancel.

\smallskip

\noindent \emph{Upper bound for $T^{\text{lower-order}}$}: By construction $T^{\text{lower-order}}$ is a sum of terms that contain at most one derivative of order $k+2$. First, we provide an estimate for the terms that contain exactly one derivative of order $k+2$. These come from the second, third and last term on the right-hand side of \eqref{eq:uniform-in-eps-Hk+1-ito} and are of the form:
\begin{equation*}
\begin{aligned}
\int_0^t  \sumi   \gRp^2 \intT (\partial_i\partial^{k+1}_\alpha v_0) \partial^{l_1}_{\beta _1}v_1\partial^{l_2}_{\beta_2}v_2
\ldots (\partial^{l_i}_{\beta_i}v_i \cdots \partial^{l_n}_{\beta_n}v_n \tilde{f}_{1,\iota})\ast \eta_\varepsilon \tilde{f}_{2,\iota} \dd t
\end{aligned}
\end{equation*}
where $l_1 + \cdots + l_n \leq k+2$,  $v_i \in\{\bar{u}, \bar{u}\ast \eta_\varepsilon\}$,  and $\tilde{f}_{1,\iota}$ and $\tilde{f}_{2,\iota}$ satisfy $\|\tilde{f}_{1,\iota} \|_{L^\infty(\T^d)}\|\tilde{f}_{2,\iota}\|_{L^\infty(\T^d)} \leq \|g_\iota\|_{C^{k+2}}^2(1+R)^2$. Here we use the notation $(\partial^{l_i}_{\beta_i}v_i \cdots \partial^{l_n}_{\beta_n}v_n \tilde{f}_{1,\iota})\ast \eta_\varepsilon:= 1$ and $\|\tilde{f}_{1,\iota}\|_{L^\infty(\T^d)} := 1$ if $i>n$. An application of \cref{rmk:trivalEstimateI} ensures the upper bound
\begin{equation*}
\begin{aligned}
& \sumi \tilde{\delta} \|g_\iota\|^2_{C^{k+2}} \int_0^t  \|\partial_i\partial^{k+1}_\alpha u\|_{L^2(\T^d)}^2 \dd t \\
& +  \sumi \frac{1}{4\tilde\delta \|g_\iota\|^2_{C^{k+2}}} \int_0^t  \gRp^4  \|\partial^{l_1}_{\beta _1}v_1\partial^{l_2}_{\beta_2}v_2
\ldots (\partial^{l_i}_{\beta_i}v_i \cdots \partial^{l_n}_{\beta_n}v_n \tilde{f}_{1,\iota})\ast \eta_\varepsilon \tilde{f}_{2,\iota}\|_{L^2(\T^d)}^2 \dd s \\
& \leq C_g \tilde{\delta} \int_0^t  \|\partial_i\partial^{k+1}_\alpha u\|_{L^2(\T^d)}^2 \dd t +\frac{C_g}{4\tilde\delta} C_{-1,R} \int_0^t \gRp \|u\|_{H^{k+1}(\T^d)}^2 \dd s
\end{aligned}
\end{equation*}
for all $\tilde{\delta}>0$, where $C_{-1,R}$ depends on $d$ and $R$ but not on $\varepsilon$. 

It remains to estimate the terms that only contain derivatives of order $\leq k+1$; they come from the last term in \eqref{eq:uniform-in-eps-Hk+1-ito}. These can be easily brought by Young's inequality into the form
\begin{equation*}
\begin{aligned}
\int_0^t  \sum_{|\beta_1|+\ldots+|\beta_n|\leq k+2} C_{\beta_1, \ldots,\beta_n}   \sum_{\iota \geq 1} \gRp^2 \|\partial^{l_1}_{\beta _1}v_1\partial^{l_2}_{\beta_2}v_2 \cdots \partial^{l_n}_{\beta_n}v_n \tilde{f}_{\iota}\|_{L^2}^2 \dd t
\end{aligned}
\end{equation*}
where $\|\tilde{f}_{\iota}\|\leq \|g_\iota\|_{C^{k+2}} (1+R)$. These terms can again be estimated by \cref{rmk:trivalEstimateI}. All in all, we obtain for all $t\geq 0$
\begin{equation*}
T^{\text{lower-order}}_t \leq
\delta C_g \int_0^t \|\bar{u}(s)\|_{\dot{H}^{k+2}(\T^d)}^{2} \dd s  + (1+\frac{1}{4\delta}) C_g C_{0,R} \int_0^t \gRp \|\bar{u}(s)\|_{H^{k+1}(\T^d)}^2 \dd s
\end{equation*}
for all $\delta>0$ where $C_{0,R}$ is a constant that depends on $d$ and $R$ but not on $\varepsilon$.

\smallskip

\noindent \emph{Upper bounds for the highest-order terms}: Using that $\eta$ is symmetric for the second equality, we have for the expressions inside $\int_0^t \sumi \ldots \dd s$ of 
'Ito quadratic variation (II)+Strato to Ito (II)':
\begin{equation*}
\begin{aligned}
&  |\text{Ito quadratic variation (II)}_{s,\iota}+\text{ Strato to Ito (II)}_{s,\iota}| \\
& \quad = \gR^2 \frac{1}{2} \left|\intT (\partial^{k+1}_\alpha\grad  u\ast\eta_\varepsilon) \cdot  u g'_{\iota} (\partial^{k+1}_\alpha \grad u \cdot g_{\iota})
- \intT (\grad \partial^{k+1}_\alpha u)
\cdot (u g'_{\iota} \{\partial^{k+1}_\alpha \grad u \cdot g_{\iota}\}\ast\eta_\varepsilon)  \right|
\\
& \quad  = \gR^2
\frac{1}{2}\left|\intT (\partial^{k+1}_\alpha \grad u\ast\eta_\varepsilon) \cdot  ug'_{\iota} (\partial^{k+1}_\alpha \grad u \cdot g_{\iota})
-\intT \{(\grad \partial^{k+1}_\alpha u) \cdot
(ug'_{\iota})\}\ast\eta_\varepsilon (\partial^{k+1}_\alpha \grad u \cdot g_{\iota})  \right| \\
&\quad  = \gR^2
\frac{1}{2}\left|\intT (\partial^{k+1}_\alpha  u\ast(\partial_i\eta_\varepsilon)) u(g'_{\iota})^i (\partial^{k+1}_\alpha \grad u \cdot g_{\iota})
-\intT \{(\partial^{k+1}_\alpha u)
(u(g'_{\iota})^i)\}\ast(\partial_i\eta_\varepsilon) (\partial^{k+1}_\alpha \grad u\cdot g_{\iota})  \right| \\
& \qquad + \text{ lower order terms} \\
\end{aligned}
\end{equation*}
where the "lower order terms" (after summation over $\iota$) can be bounded in the same way as $T^{\text{lower-order}}$.

For $\tilde{\eta}\in C(\R^d, \R)$ with compact support, $\tilde{\eta}_\varepsilon(x):= \frac{1}{\varepsilon^{d}} \tilde{\eta}(x/\varepsilon) \,\, \forall x\in\R^d$, $v \in L^1(\T^d, \R)$, $w\in C^1(\T^d,\R)$ we have
\begin{equation*}
|(vw)\ast \tilde{\eta}_{\varepsilon}(x) - v\ast \tilde{\eta}_{\varepsilon}(x)w(x)|\leq C_{\tilde{\eta}} \, \varepsilon (|v|\ast|\tilde{\eta}_\varepsilon|)(x) \|Dw\|_{L^\infty(\T^d)},
\end{equation*}
where $C_{\tilde{\eta}}$ is a constant that depends on the support of $´\tilde{\eta}$.
Moreover, we have $\|D(\up (g'_{\iota})^i)\|_{L^\infty(\T^d)} \leq C_d (\|\up\|_{H^k} + \|\up\|^2_{H^k})\|g_\iota\|_{C^{k+2}}$. Hence, we have
\begin{equation*}
\begin{aligned} 
& |\text{Ito quadratic variation (II)}_{s,\iota} +\text{ Strato to Ito (II)}_{s,\iota}| \\
& \quad \leq C_\eta 
\frac{1}{2}  \gR^2\intT |\partial^{k+1}_\alpha  u|\ast|\partial_i\eta_\varepsilon| \varepsilon \|D(u(g'_{\iota})^i)\|_{L^\infty(\T^d)} |\partial^{k+1}_\alpha \grad u| \|g_{\iota}\|_{L^\infty(\T^d)} + \text{ lower order terms} \\
& \quad \leq \delta \|g_\iota\|^2_{C^{k+2}}\|\up \|_{\dot{H}^{k+2}(\T^d)}^2  +  \gR^4 \frac{\|g_\iota\|^2_{C^{k+2}}}{4\delta} \||\partial^{k+1}_\alpha  u|\ast|\partial_i\eta_\varepsilon|\|^2_{L^2(\T^d)} \varepsilon^2 C_{4,R}  + \text{ lower order terms} \\
& \quad \leq \delta \|g_\iota\|^2_{C^{k+2}}\|\up \|_{\dot{H}^{k+2}(\T^d)}^2  +  \gR^4 \frac{\|g_\iota\|^2_{C^{k+2}}}{4\delta} \|\partial^{k+1}_\alpha  u\|^2_{L^2(\T^d)} \|\partial_i\eta_\varepsilon\|^2_{L^1(\R^d)} \varepsilon^2 C_{4,R}   + \text{ lower order terms} \\
& \quad \leq \delta \|g_\iota\|^2_{C^{k+2}} \|\up \|_{\dot{H}^{k+2}(\T^d)}^2  +   \|g_\iota\|^2_{C^{k+2}} \frac{C_{5,R}}{\delta}   \|\partial^{k+1}_\alpha  u\|^2_{L^2(\T^d)} + \text{ lower order terms} \\
\end{aligned}
\end{equation*}
where $C_{4,R}$ and $C_{5,R}$ are constants that depend on $R$, $\textrm{supp}(\eta)$, $\|\eta\|_{W^{1,1}(\R^d)}$ and $d$. Similarly, we have:
\begin{equation*}
\begin{aligned}
& |\text{Ito quadratic variation (III)}_{s,\iota} +\text{ Strato to Ito (III)}_{s,\iota}| \\
& \quad  \leq \frac{1}{2} \gRp^2 \left|  \intT (\partial^{k+1}_\alpha \grad u\falt) \cdot ug'_{\iota} (\partial^{k+1}_\alpha \grad u\ast \eta_\varepsilon \cdot ug'_{\iota}) - \intT (\grad\partial^{k+1}_\alpha u)
\cdot (ug'_{\iota} \{(\partial^{k+1}_\alpha \grad u\falt)\cdot u g'_{\iota}\}\ast\eta_\varepsilon)
 \right| \\
&\quad =  \frac{1}{2}\gRp^2 \left| \intT (\partial^{k+1}_\alpha  \grad u\falt)\cdot ug'_{\iota} (\partial^{k+1}_\alpha \grad u\ast \eta_\varepsilon \cdot ug'_{\iota}) -
\intT \{(\grad \partial^{k+1}_\alpha u) \cdot (ug'_{\iota}) \}\ast \eta_\varepsilon (\partial^{k+1}_\alpha \grad u\falt)\cdot u g'_{\iota} \right| \\
& \quad \leq  C_\eta \frac{1}{2}\gRp^2 \intT |\grad \partial^{k+1}_\alpha u| \ast \eta_\varepsilon \|D(ug'_{\iota})\|_{L^\infty(\T^d)} |\partial^{k+1}_\alpha \partial_l u\falt| \varepsilon\|u (g'_{\iota})^l\|_{L^\infty(\T^d)} \\
&\quad \leq \delta \|g_\iota\|^2_{C^{k+2}} \|\up \|_{\dot{H}^{k+2}(\T^d)}^2 + \frac{1}{8\delta} \gRp^4  C_{6,R} \|g_\iota\|^2_{C^{k+2}}  \|\partial^{k+1}_\alpha u \|_{L^2(\T^d)}^2 \|\partial_l\eta_\varepsilon\|_{L^1(\R^d)}^2 \varepsilon^2\\
&\quad \leq \delta \|g_\iota\|^2_{C^{k+2}}\|\up \|_{\dot{H}^{k+2}(\T^d)}^2 +  \frac{C_{7,R}}{\delta} \|g_\iota\|^2_{C^{k+2}}\|\partial^{k+1}_\alpha u \|_{L^2(\T^d)}^2
\end{aligned}
\end{equation*}

Summing over $|\alpha|=k+1$, inserting the estimates for the lower order terms and the highest order terms and choosing $\delta$ small enough gives for all $t\in[0,T]$
\begin{equation*}
\begin{aligned}
\|u(t)\|_{\dot{H}^{k+1}(\T^d)}^2 + \frac{1}{2} \int_0^t \|u(s)\|_{\dot{H}^{k+2}(\T^d)}^2  \dd s \leq &  \|u_0\|_{\dot{H}^{k+1}(\T^d)}^2  + C_R \int_0^t \|u(s)\|_{\dot{H}^{k+1}(\T^d)}^2 \dd s  \\
&  +  C_R \int_0^t \gRp \|u(s)\|_{H^{k+1}(\T^d)}^2 \dd s +   M^{(\varepsilon)}_t
\end{aligned}
\end{equation*}
where the constant $C_R$ depends on  $C_g$, $\eta$, $d$ and $R>0$, but not on $\varepsilon$ and $(M^{(\varepsilon)}_t)_{t\in[0,T]}$ is (by slight abuse of notation) the sum over $|\alpha|=k+1$ of the local martingales defined in \eqref{eq:uniform-in-eps-Hk+1-mart}. This implies
the first inequality of  \cref{lemma:H^k-uniform-in-eps}. Moreover, we have by 
$\|u(t)\|_{H^k(\T^d)} \leq \max\{\|u_0\|_{H^k(\T^d)}, R+1\}$ that
\begin{equation*}
\begin{aligned}
\|u(t)\|_{H^{k+1}(\T^d)}^2 + \frac{1}{2} \int_0^t \|u(s)\|_{H^{k+2}(\T^d)}^2  \dd s \leq &  \|u_0\|_{H^{k+1}(\T^d)}^2 + (R+1)^2  + (C_R+1) \int_0^t \|u(s)\|_{H^{k+1}(\T^d)}^2 \dd s  \\
&  +  C_R \int_0^t \gRp \|u(s)\|_{H^{k+1}(\T^d)}^2 \dd s +   M^{(\varepsilon)}_t
\end{aligned}
\end{equation*}
Applying \cref{lemma:StochGron}  implies the second inequality, using that for non-negative processes $\sup_t X^1(t) + \sup_tX^2(t) \leq 2\sup_t(X^1(t) + X^2(t))$ holds true.
\end{proof}


\begin{lemma}\label{lemma:Walpha2}
Assume \cref{assumption}, let $u_0 \in L^2(\Omega, \F_0, \PP; H^{k}(\T^d))$ for $k$ as given by \cref{assumption}, $s>d/2+2$,  $R>0$, $\varepsilon\in(0,1)$ and $T>0$. Let $(\uer(t))_{t\in[0,T]}$ denote a variational solution of \eqref{eq:approx-cut-nonlocal}. For all $\beta\in(0,1/2)$, $p\geq 2$ we have
\begin{equation*}
\E\left[ \|\uer\|_{W^{\beta,p}(0,T;H^{-s}(\T^d))}^2 \right] \leq C_R\left(\E\left[\|u_0\|_{H^k(\T^d)}^2\right]+1\right)
\end{equation*}
where the constant $C_R$ depends on $d$, $g$, $p$, $\beta$, $T$ and $R$, but does not depend on $\varepsilon>0$.
\end{lemma}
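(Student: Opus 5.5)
We split the solution into its drift part and its martingale part, working in the weak form with negative Sobolev norms. By the definition of a variational solution, $\PP$-a.s. for all $t\in[0,T]$,
\begin{equation*}
\uer(t) = u_0 + \int_0^t \bar A(\uer(r))\,\dd r - \sumi\int_0^t \gamma_R(\|\uer(r)\|_{H^k})\Div(\uer g_\iota)\,\dd W^\iota_r =: u_0 + I(t) + J(t),
\end{equation*}
with identity in $H^{-s}(\T^d)$. Here $\bar A$ is the drift of \eqref{eq:approx-cut-nonlocal}, that is, the two Stratonovich-to-It\^o correction terms plus $\Delta u$. We estimate $\|u_0\|_{W^{\beta,p}(0,T;H^{-s})}$, $\|I\|_{W^{\beta,p}(0,T;H^{-s})}$ and $\|J\|_{W^{\beta,p}(0,T;H^{-s})}$ separately. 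The constant term is trivial, since $\|u_0\|_{H^{-s}}\le\|u_0\|_{H^k}$. This is the standard Flandoli--G\c{a}tarek type argument, and the whole point is that every bound uses only quantities controlled by the taming factor $\gamma_R$, hence is uniform in $\varepsilon$.

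\emph{Drift.} The central pointwise bound is
\begin{equation*}
\|\bar A(u)\|_{H^{-s}}\le C_R(1+\|u\|_{H^k}),
\end{equation*}
with $C_R$ independent of $\varepsilon$. For the correction terms we only need them when $\gamma_R\neq0$, i.e.\ when $\|u\|_{H^k}\le R+1$. Since $s>d/2+2$, we have $\|\Div F\|_{H^{-s}}\le C\|F\|_{H^{1-s}}\le C\|F\|_{L^1}$ for any field $F$, using the embedding $H^{s-1}\hookrightarrow L^\infty$. Each flux $g_\iota(u\falt)\Div(ug_\iota(u\falt))$ and $ug_\iota'(u\falt)\bigl(\Div(ug_\iota(u\falt))\bigr)\falt$ is therefore bounded in $L^2$ by a constant times $\|g_\iota\|_{C^1([-K,K]\times\T^d)}^2$ times a polynomial in $\|u\|_{W^{1,\infty}}\lesssim\|u\|_{H^k}\le R+1$. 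Here $K\lesssim R+1$, because $\|u\falt\|_{L^\infty}\le\|u\|_{L^\infty}$ by $\|\eta_\varepsilon\|_{L^1}=1$. Convolution with $\eta_\varepsilon$ is a contraction in every $L^q$, so no $\varepsilon$ enters. The Laplacian gives $\|\Delta u\|_{H^{-s}}\le\|u\|_{H^k}$. Hence $\|I\|_{W^{1,2}(0,T;H^{-s})}^2\le C_R\int_0^T(1+\|\uer\|_{H^k}^2)\,\dd t$, and $W^{1,2}(0,T)\hookrightarrow W^{\beta,p}(0,T)$ for $\beta<1/2$. If $p$ is large this embedding may fail; in that case we instead use the pathwise bound $\|\uer(t)\|_{H^k}\le\max\{\|u_0\|_{H^k},R+1\}$ (noted at the end of the proof of \cref{lemma:H^k-uniform-in-eps}) together with $W^{1,\infty}\hookrightarrow W^{\beta,p}$. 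Either way $\E\|I\|^2\le C_R(\E\|u_0\|_{H^k}^2+1)$.

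\emph{Martingale part.} We invoke the standard estimate (e.g.\ Flandoli--G\c{a}tarek, Lemma 2.1): for $\beta<1/2$,
\begin{equation*}
\E\Bigl\|\int_0^\cdot G\,\dd W\Bigr\|^p_{W^{\beta,p}(0,T;H^{-s})}\le C\,\E\int_0^T\|G\|^p_{\mathcal L_2(\ell^2,H^{-s})}\,\dd t.
\end{equation*}
We have
\begin{equation*}
\|G\|^2_{\mathcal L_2}=\gamma_R^2\sumi\|\Div(ug_\iota(u\falt))\|^2_{H^{-s}}\le C\gamma_R^2\sumi\|g_\iota\|^2_{C^0([-K,K]\times\T^d)}\|u\|^2_{L^2}\le C_R,
\end{equation*}
which is bounded deterministically because of the cutoff and is independent of $\varepsilon$. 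Jensen's inequality ($2\le p$) then gives $\E\|J\|^2\le(\E\|J\|^p)^{2/p}\le C_R$.

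The main obstacle is making the drift bound genuinely $\varepsilon$-uniform. The nonlocal correction term must not be estimated through derivatives of $\eta_\varepsilon$. We avoid this by placing all derivatives on test functions in $H^{s}$, with $s>d/2+2$ chosen exactly so that two derivatives plus an $L^\infty$-embedding are available, and by using only $L^q$-contractivity of convolution. A secondary technical point is the case of large $p$ for the drift, which is handled by the pathwise $H^k$-bound as indicated.
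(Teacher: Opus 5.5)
Your proposal is correct and follows essentially the same route as the paper. The paper also splits $u^{\varepsilon,R}=u_0+u_1+u_2$. It bounds the drift pathwise and Lipschitz-in-time in $H^{-s}(\T^d)$ by moving the derivative onto the test function, using the cutoff, the bound $\|v\ast\eta_\varepsilon\|_{W^{1,\infty}}\le\|v\|_{W^{1,\infty}}\lesssim\|v\|_{H^k}$ and the pathwise bound on $\|u^{\varepsilon,R}(t)\|_{H^k}$, and it treats the stochastic integral by Flandoli--Gatarek's Lemma 2.1 followed by Jensen. Your hedge about large $p$ is unnecessary, since for $\beta<1/2$ one has $W^{1,2}(0,T)\hookrightarrow C^{1/2}([0,T])\hookrightarrow W^{\beta,p}(0,T)$ for every $p<\infty$; in any case, your fallback via the pathwise Lipschitz bound is exactly the paper's argument.
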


\begin{proof} We fix some $\varepsilon>0$, $R<\infty$ and $T>0$. As before, we assume w.l.o.g. $\cup_{\iota \geq 1}\textrm{supp}(g_\iota)\subset [-K,K]\times \T^d$ for some $K<\infty$ which depends on $R$ and $d$ but not on $\varepsilon$. We set \\ $C_g := \sumi \|g_\iota\|^2_{C^{k+2}([-K,K]\times\T^d, \R^d)}$. For the proof shorten the notation by $u:=\uer$ and $g_\iota := g_\iota(\uerp\falt)$. Moreover, we set $Y:=H^{-s}(\T^d)$. The value of the constant $C_R$ changes from line to line and depends on the quantities listed above. We use the inclusion map $i\colon H^k(\T^d) \to H^{-s}(\T^d)$, $i(v)(\varphi) := \langle v, \varphi\rangle_{L^2(\T^d)}$ for all $v\in H^k(\T^d)$, $\varphi \in H^s(\T^d)$ and split $u$ into three parts: Define for all $\varphi \in H^s(\T^d)$, $t\in[0,T]$
\begin{equation*}
\begin{aligned}
\langle u_1(t), \varphi \rangle_{L^2}  &:=  - \frac{1}{2}\int_0^t  \langle \sumi  \gRp^2 g_{\iota} \Div(\up g_{\iota}), \grad \varphi \rangle_{L^2} \dd s  \\
& \qquad - \frac{1}{2} \int_0^t  \langle  \sum_{\iota\geq 1} \gRp^2  \up g'_{\iota} \Div(\up g_{\iota})\ast\eta_\varepsilon, \grad \varphi \rangle_{L^2} \dd s - \int_0^t \langle  \grad u,  \grad \varphi \rangle_{L^2}  \dd s, \\
\langle u_2(t), \varphi \rangle_{L^2} &:= -\sum_{\iota\geq 1} \int_0^t  \langle \gRp \Div(\up g_{\iota}), \varphi \rangle_{L^2} \dd W^\iota_s.
\end{aligned}
\end{equation*}
Using the resolvent $(1 + \sum_{\alpha\in\N_0^d, 1\leq |\alpha| \leq  k}(-1)^{|\alpha|} \partial_{\alpha}^{|\alpha|}\partial_{\alpha}^{|\alpha|})^{-1}$ we immediately obtain that $u = u_0 + u_1 + u_2$, for example in $L^0(\Omega,\F, \PP; C([0,T]; H^{-s}(\T^d))$. By the construction of the tamed SPDEs, we have $\|u(t)\|_{H^{k}(\T^d)} \leq \max\{\|u_0\|_{H^{k}(\T^d)}, R+2\}$ for all $t\in[0,T]$.
Moreover, for a constant $C$ independent of $\varepsilon$, we also obtain $\|v\falt\|_{L^\infty(\T^d)} + \|\grad v\falt\|_{L^\infty(\T^d)} \leq \|v\|_{L^\infty(\T^d)} +\|\grad v\|_{L^\infty(\T^d)} \leq C \|v\|_{H^k(\T^d)}$. Hence,
\begin{align}
&|\frac{1}{2} \langle \sumi \gRp^2 g_{\iota} \Div(\up g_{\iota})+ \sumi  \gRp^2  \up g'_{\iota} \Div(\up g_{\iota})\ast\eta_\varepsilon, \grad \varphi \rangle_{L^2} |  \leq  \|\nabla \varphi\|_{L^\infty(\T^d)}  C_R C_g \label{eq:Walpha-u1-2} \\
&| \langle  \grad u, \grad \varphi \rangle_{L^2}|  \leq C  \|\grad \varphi\|_{L^\infty(\T^d)} \max\{\|u_0\|_{H^{k}(\T^d)}, R+2\} \label{eq:Walpha-u1-3}
\end{align}
and
\begin{equation}\label{eq:Walpha-u2}
\sumi |\langle \gRp \Div(\up g_{\iota}), \varphi_\iota \rangle_{L^2}|^2  \leq \sup_{\iota\geq 1}\|\grad \varphi_\iota \|^2_{L^{\infty}(\T^d)} C_R C_g.
\end{equation}
Hence a simple application of Fubini's theorem, \eqref{eq:Walpha-u1-2} and \eqref{eq:Walpha-u1-3} imply $\PP$-a.s.
\begin{equation*}
\begin{aligned}
\|u_0+u_1\|^p_{W^{\beta,p}(0,T;Y)} & =  \|u_0+u_1\|^p_{L^p(0,T;Y)} + \int_0^T \int_0^T \frac{|u_1(t)-u_1(s)|_Y^p}{|t-s|^{1+\beta p}} \dd t \dd s \\
& \leq C(T, R, C_g)(1+\|u_0\|_{H^{k}(\T^d)})^p +  C(p, \beta, T) (C_R C_g +  \max\{\|u_0\|_{H^{k}(\T^d)}, R+2\})^p.\\
\end{aligned}
\end{equation*}
Moreover, we have by \cite[Lemma 2.1, p.369]{FlandoliGatarek} and \eqref{eq:Walpha-u2}
\begin{equation*}
\begin{aligned}
\E\left[ \|u_2\|_{W^{\beta,p}(0,T;Y)}^{p}\right]
& \leq C(T,p,\beta) \E\left[\int_0^T \left(\sumi \|\gRp \Div(\up g_\iota)\|^2_{Y}\right)^{p/2} \dd s \right]   \leq C(T,p,\beta)T (C_R C_g)^{p/2}.
\end{aligned}
\end{equation*}
\end{proof}

\section{Convergence of the non-local SPDEs to the local SPDE}
\label{sec:6}

We first establish that, for any fixed $R$, the solutions of the tamed SPDE converge. This convergence result is subsequently used to prove \cref{thm:main}.

\begin{lemma}\label{lemma:convergence-of-eq-with-cutoff}
Assume \cref{assumption}, let $u_0\in L^2(\Omega, \F_0, \PP;H^{k+1}(\T^d))$ for $k$ as given by \cref{assumption},  $R>0$ and $T>0$. Let $(\uer(t))_{t\in[0,T]}$ and $(u^{R}(t))_{t\in[0,T]}$ denote the unique variational solutions of \eqref{eq:approx-cut-nonlocal} and \eqref{eq:approx-cut-local} respectively. Then, we have for all $\delta>0$, for all $(\varepsilon_{n})_{n\in\N} \subseteq (0,1)$ satisfying $\lim_{n\to\infty}\varepsilon_n = 0$ that
\begin{equation}
 \lim_{n\to\infty} \PP\left[ \sup_{t\in[0,T]} \|u^{\varepsilon_n, R}(t) - u^R(t)\|_{H^k(\T^d)} >\delta\right] =0.
\end{equation}
\end{lemma}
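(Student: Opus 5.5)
We will argue by compactness in law followed by a Gyöngy–Krylov type identification, using the pathwise uniqueness of the tamed local equation from \cref{thm:local-in-spaceEquationCutoffWP}.

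\emph{Step 1: tightness.} Fix $R$, $T$ and a sequence $\varepsilon_n\to0$. By \cref{lemma:H^k-uniform-in-eps}, the family $(u^{\varepsilon_n,R})_n$ is bounded in probability in $L^\infty(0,T;H^{k+1}(\T^d))\cap L^2(0,T;H^{k+2}(\T^d))$, uniformly in $n$. By \cref{lemma:Walpha2}, it is bounded in $L^2(\Omega;W^{\beta,p}(0,T;H^{-s}(\T^d)))$ for some $\beta\in(0,1/2)$ and $p$ large with $\beta p>1$. Since $H^{k+1}\subset H^k$ compactly and $H^k\subset H^{-s}$ continuously, an Aubin–Lions–Simon type compactness lemma (the Flandoli–Gatarek variant) shows that
\[
\{v:\ \|v\|_{L^\infty H^{k+1}}+\|v\|_{W^{\beta,p}H^{-s}}\le C\}
\]
is relatively compact in $C([0,T];H^k(\T^d))$, because $\beta p>1$ gives equicontinuity in $H^{-s}$, and interpolation with the $H^{k+1}$ bound upgrades this to $H^k$. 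Hence the laws of $u^{\varepsilon_n,R}$ are tight in $C([0,T];H^k)$, and also weakly tight in $L^2(0,T;H^{k+1})$. We treat pairs $(u^{\varepsilon_n,R},u^{\varepsilon_m,R})$ jointly with the noise $W$ regarded as an $\R^\infty$-valued process.

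\emph{Step 2: identification of the limit.} By Skorokhod's representation theorem, along a subsequence of any pair sequence there are copies $(\tilde u^n,\tilde u^m,\tilde W^{n})$ converging almost surely in $C([0,T];H^k)\times C([0,T];H^k)\times C([0,T];\R^\infty)$. We then pass to the limit in the weak formulation tested with $\varphi\in C^\infty$. The key ingredients are:
\begin{itemize}
\item $\|v*\eta_\varepsilon-v\|_{C^1}\lesssim\varepsilon\|v\|_{H^k}$, so that $g_\iota(\tilde u^n*\eta_{\varepsilon_n})\to g_\iota(\tilde u)$ in $C^1$;
\item $f*\eta_\varepsilon\to f$ in $L^2$ for the second Itô-correction term, which in the limit becomes $u g_\iota'(u)\Div(ug_\iota(u))$ and combines with $g_\iota\Div(ug_\iota)$ into $\g_\iota'\Div(\g_\iota)$;
\item continuity of $\gamma_R(\|\cdot\|_{H^k})$ along $C([0,T];H^k)$ convergence;
\item convergence of the stochastic integrals by the standard lemma of Debussche–Glatt-Holtz–Temam, using summability of $\sum_\iota\|g_\iota\|_{C^{k+3}}^2$ and the uniform $H^{k}$ bound that the taming provides.
\end{itemize}
The Laplacian term passes to the limit since $\tilde u^n\to\tilde u$ weakly in $L^2H^{k+1}$. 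Both limits are therefore variational solutions of \eqref{eq:approx-cut-local} driven by the same Wiener process, with the same initial datum $u_0$.

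\emph{Step 3: conclusion.} Pathwise uniqueness from \cref{thm:local-in-spaceEquationCutoffWP} forces the joint limit law to be concentrated on the diagonal. The Gyöngy–Krylov lemma then gives convergence in probability of $u^{\varepsilon_n,R}$ in $C([0,T];H^k)$ on the original space to some $v$. Repeating the identification argument on the original space shows that $v$ solves \eqref{eq:approx-cut-local}, so $v=u^R$ by uniqueness, which is the claim.

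\emph{Main obstacle.} The main difficulty is Step 2: passing to the limit in the nonlocal Itô-correction terms, which are quadratic in first derivatives, at the level of the weak formulation. This requires the strong $C([0,T];H^k)$ convergence together with the commutator estimate for $\eta_\varepsilon$. A second point needing care is verifying that $\beta p>1$ is admissible with $\beta<1/2$, which holds because \cref{lemma:Walpha2} allows arbitrary $p\geq2$.
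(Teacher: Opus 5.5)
Your proposal is correct and has the same architecture as the paper. Both apply Gy\"ongy--Krylov to pairs and get tightness in $C([0,T];H^k(\T^d))$ from \cref{lemma:H^k-uniform-in-eps} and \cref{lemma:Walpha2} through Simon's compactness with $\beta p>1$ (the paper takes $p=3$, $\beta\in(1/3,1/2)$). Both then use Skorokhod, identify both limits as solutions of \eqref{eq:approx-cut-local}, invoke pathwise uniqueness, and repeat the identification on the original space.

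The genuine difference is how the stochastic integral is identified. You pass to the limit in it directly with the Debussche--Glatt-Holtz--Temam lemma. That requires a step you do not state: the Skorokhod copies $\tilde u^n$ must solve \eqref{eq:approx-cut-nonlocal} driven by $\tilde W^n$, with respect to a filtration (e.g.\ generated by $\tilde u^n,\tilde u^m,\tilde W^n$) in which $\tilde W^n$ is a Brownian motion. This is the standard transfer of the equation to the new probability space. You also need $\tilde W$ to be a Brownian motion for the joint limit filtration.

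The paper never transfers the equation. It only uses equality of laws for functionals of $(u,W)$ that contain no stochastic integral. It shows that the process $M$ is a martingale with the correct quadratic variation and the correct covariation with each $\tilde W^\iota$, and reads off $M$ as the stochastic integral. The price is that expectations must be passed to the limit. This is why the paper first reduces to $\|u_0\|_{H^k}\le C_0$, to get uniform integrability; your in-probability argument does not need that reduction.

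Two minor points. First, $\|v\ast\eta_\varepsilon-v\|_{C^1}\lesssim\varepsilon\|v\|_{H^k}$ is false for $k=\lceil d/2+3/2\rceil$, since $H^k(\T^d)\not\subset C^2$. One only gets a rate $\varepsilon^\gamma$ with $\gamma\in(0,1/2)$, and mere convergence is all you use. Second, no commutator estimate is needed at this stage; it was only used for the uniform $H^{k+1}$ bound in \cref{lemma:H^k-uniform-in-eps}. Likewise, weak convergence in $L^2(0,T;H^{k+1})$ is not needed for the Laplacian term. Tested against smooth $\varphi$, every term passes to the limit under $C([0,T];H^k)$ convergence alone. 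The $H^{k+1}$ information only serves to place the limit in the variational class, for which the paper uses the expectation bound from the first inequality of \cref{lemma:H^k-uniform-in-eps} together with Fatou.
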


\begin{proof}  Our proof follows the strategy of the proof of \cite[Theorem 5.29]{FehrmanGess}, adapted to the present nonlocal-to-local convergence setting.
We fix some  $(\varepsilon_{n})_{n\in\N}$ and $R>0$ for this proof and drop the $R$ in notation, i.e. $u_\varepsilon :=\uer$ and $u:=\ur$. 
By replacing $u_0$ by $u_0 \one_{\{\|u_0\|_{H^k}<N\}}$, $N\in\N$ if necessary and using a simple limiting argument, we may assume without loss of generality that  $\|u_0\|_{H^k(\T^d)}\leq C_0$ for some $C_0\in\N$.  We may assume w.l.o.g. that $\cup_{\iota\geq 1} \textrm{supp}(g_\iota)$ is contained in a compact subset of $\R\times \T^d$ since for all $v\in H^k(\T^d)$ with $\gamma_R(\|v\|_{H^k(\T^d)})\neq 0$ we have $\|v\ast\eta_{\varepsilon}\|_{L^\infty(\T^d)}\leq \|v\|_{L^\infty(\T^d)}\leq C_{sob} \|v\|_{H^k(\T^d)} \leq C_{sob}(R+1)$.

We first show that $(u_{\varepsilon_n})_{n\in\N}$ converges in probability to some limit.  Subsequently, we deduce the correct limit i.e. we show that $u_{\varepsilon_n}$ converges to $u$.

\bigskip

\textbf{I Convergence of $(u_{\varepsilon_n})_{n\in\N}$ in probability to some limit:} We view $(u_{\varepsilon_n})_{n\in\N}$  as a family of random variables defined on the fixed underlying probability space $(\Omega, \F, \PP)$ taking values in the Polish space $C([0,T]; H^k(\T^d))$. We apply \cite[Lemma 1.1.]{Gyongy-Krylov} to prove that $(u_{\varepsilon_n})_{n\in\N}$ converges in probability:  By this lemma it suffices to show that for any pair $(m_l, n_l)_{l\in\N}$ such that $m_l, n_l \to \infty$ for $l\to\infty$ there exists a subsequence
$(m_l', n_l')_{l\in\N}$  such that
\begin{equation}
 \textrm{law}(u_{\varepsilon_{m_l'}}, u_{\varepsilon_{n_l'}}) \implies \mu \qquad \text{for } l\to\infty
\end{equation}
where $\mu$ is a probability measure on the space $\bar{X}\times\bar{X}$, for $\bar{X} := C([0,T];H^k(\T^d))$, satisfying $\mu(\{(x,y) \in \bar{X}\times\bar{X}: x=y\} =1$.

\bigskip
\textbf{I.1 Convergence of $\textrm{law}(u_{\varepsilon_{m_l'}}, u_{\varepsilon_{n_l'}})$  to some limit:} Let $(\varepsilon_{n})_{n\in\N}$ and $(m_l, n_l)_{l\in\N}$ be as stated above.
Define $\U:= C([0,T];H^k(\T^d))\times  L^2(0,T, H^{k+1}(\T^d))$. We have the following compact embeddings for $\beta\in(1/3,1/2)$ by \cite[Corollary 5, p. 86]{Simon} 
\begin{equation*}
\begin{aligned}
 W^{\beta, 3}(0,T, H^{-s}(\T^d))\cap L^{\infty}(0,T, H^{k+1}(\T^d)) &\csubset C([0,T]; H^k(\T^d))\\
 W^{\beta, 2}(0,T, H^{-s}(\T^d))\cap L^{2}(0,T, H^{k+2}(\T^d)) &\csubset L^2(0,T, H^{k+1}(\T^d)).
\end{aligned}
 \end{equation*}
Moreover, by \cref{lemma:H^k-uniform-in-eps} and \cref{lemma:Walpha2}, for any $\delta>0$ there exists $Q_\delta>0$ such that 
\begin{equation*}
\begin{aligned}
\inf_{\varepsilon\in(0,1)} \PP &\left[ \|u_\varepsilon\|_{ W^{\beta, 2}(0,T, H^{-s}(\T^d))} \leq Q_\delta, \|u_\varepsilon\|_{ W^{\beta, 3}(0,T, H^{-s}(\T^d))} \leq Q_\delta, \right.\\
& \,\, \left.  \|u_\varepsilon\|_{L^{\infty}(0,T, H^{k+1}(\T^d))} \leq Q_\delta, \|\bar{u}_\varepsilon\|_{L^{2}(0,T, H^{k+2}(\T^d))} \leq Q_\delta \right] \geq 1-\delta.
\end{aligned}
\end{equation*}
This implies that $\forall \delta >0$ there exists a compact set  $K_\delta \subset \U \times \U \times C([0,T];\R)^{\N}$  such that
\begin{equation}
\inf_{l\in\N} \PP\left[ (u_{\varepsilon_{m_l}}, \hat{u}_{\varepsilon_{m_l}}, u_{\varepsilon_{n_l}},\hat{u}_{\varepsilon_{n_l}},(W^\iota)_\iota) \in K_\delta \right] \geq 1- \delta
\end{equation}
where we equip $C([0,T];\R)^{\N}$ with the topology induced by the metric
\begin{equation*}
D((f_\iota)_ {\iota\in\N}, (g_\iota)_ {\iota\in\N}):= \sum_{\iota
=1}^{\infty} 2^{-\iota} \frac{\|f_\iota - g_\iota\|_{C([0,T];\R)}}{1+\|f_\iota - g_\iota\|_{C([0,T];\R)}}.
\end{equation*}
Thus, by Prokhorov's theorem for metric spaces (see e.g. \cite[Chapter 1, Theorem 5.1]{Billingsley}) and the Skorokhod representation theorem for separable metric spaces (see e.g. \cite[Chapter 1, Theorem 6.7]{Billingsley}) there exists a subsequence $(m'_l, n'_l)_l$  and an enlarged probability space $(\tilde{\Omega}, \tilde{\F}, \tilde{\PP})$  with random variables $(v_l^1, \hat{v}_l^1, v_l^2,\hat{v}_l^2, (\tilde{W}^\iota_{l})_\iota )$, $(v^1, \hat{v}^1, v^2, \hat{v}^2,  (\tilde{W}^\iota)_\iota)$ taking values in $\U\times\U\times C([0,T];\R)^{\N}$ satisfying
\begin{equation}\label{eq:lemma31-PasConvergence}
(v_l^1, \hat{v}_l^1, v_l^2,\hat{v}_l^2, (\tilde{W}^\iota_{l})_\iota ) \to (v^1, \hat{v}^1, v^2, \hat{v}^2, (\tilde{W}^\iota)_\iota ) \qquad \tilde{\PP}\text{-a.s. for } l\to\infty
\end{equation} 
and
\begin{equation}\label{eq:lemma31-EqualInLaw}
\textrm{law} (v_l^1, \hat{v}_l^1, v_l^2,\hat{v}_l^2, (\tilde{W}^\iota_{l})_\iota) = \textrm{law}(u_{\varepsilon_{m'_l}},\hat{u}_{\varepsilon_{m'_l}}, u_{\varepsilon_{n'_l}},\hat{u}_{\varepsilon_{n'_l}}, (W^\iota)_\iota) \qquad \text{ for all } l\in\N
\end{equation} 
with respect to the topology  of $\U\times\U\times C([0,T];\R)^{\N}$.
Moreover, w.r.t. the equivalence classes of  $L^2([0,T],H^k(\T^d))$ we have $v_l^i=\hat{v}_l^i$ $\tilde{\PP}$-a.s. for $i=1,2$. Define for all $t\in[0,T]$
\begin{equation*}
 \tilde{\F}^*_t := \sigma(\{v^1(s), v^2(s), \tilde{W}^{\iota}(s) \mid s\leq t, \iota \in\N\})\vee\mathcal{N},
\end{equation*}
where $\mathcal{N}$ denotes all nullsets w.r.t $\tilde{P}$. Let $(\tilde{\F}_t)_{t\geq 0}$ denote the smallest right-continuous complete filtration that contains $(\tilde{\F}^*_t)_{t\geq 0}$.

\bigskip
\textbf{I.2 Determining the limit $\mu$:} 
To conclude convergence of $(u^R_{\varepsilon_n})_n$ in probability, it remains to show $\mu(\{(x,y) \in \bar{X}\times\bar{X}: x=y\}) =1$. It suffices to verify that $v^1$ is a variational solution of \eqref{eq:approx-cut-local} in the sense of \cref{def:approx-cut-sol}. By \cref{thm:local-in-spaceEquationCutoffWP}, which guarantees uniqueness of the variational solutions, we may then deduce $v^1 = v^2$.

\smallskip
\textit{Definition of the stochastic process $M$:} In order to verify that $v^1$ is a variational solution, define $\forall t\in[0,T]$, $\varphi\in L^2(\T^d)$
\begin{equation*}
\begin{aligned}
\langle M(t), \varphi \rangle_{L^2(\T^d)}   & := \langle v^1(t), \varphi \rangle_{L^2(\T^d)}   -   \langle v^1(0), \varphi \rangle_{L^2(\T^d)} - \int_0^{t}\langle \Delta v^1, \varphi \rangle_{L^2(\T^d)}  \dd s  \\
&  \quad - \frac{1}{2}\int_0^{t} \langle \sumi \gamma_R^2( \|v^1\|_{H^{k}}) \Div(\g_\iota'( v^1) \Div(\g_\iota(v^1))), \varphi \rangle_{L^2(\T^d)}  \dd s.
\end{aligned}
\end{equation*}
Using $\|u_0\|_{H^k(\T^d)}\leq C_0$ and equations \eqref{eq:lemma31-EqualInLaw} and \eqref{eq:lemma31-PasConvergence}, we obtain:
\begin{equation}\label{eq:lemma31-HkBdd}
\begin{aligned}
 1 & = \PP[\|u_{\varepsilon_{m'_l}}\|_{H^k(\T^d)} \leq \max(R+2,C_0)] = \PP[\|v^1_l \|_{H^k(\T^d)} \leq  \max(R+2,C_0)] \\
 &  = \PP[\|v^1\|_{H^k(\T^d)} \leq  \max(R+2,C_0)].
\end{aligned}
\end{equation}
This ensures that $M$ is a square integrable $(\tilde{\F}^*_t)_{t\geq 0}$ adapted $L^2(\T^d)$-valued stochastic process.

\smallskip
\textit{$M$ is a martingale w.r.t $(\tilde{\F}^*_t)_{t\geq 0}$:}
Let $f: C([0,T];H^k(\T^d)) \times C([0,T];H^k(\T^d))  \times C([0,T]; \R)^\N \to \R$ be an arbitrary continuous bounded function. Let $u_{\varepsilon_{n_l}}|_{[0,s]}$ denote $[0,T]\ni t \mapsto u_{\varepsilon_{n_l}}(t\wedge s)$ and shorten the notation by
\begin{equation*}
\begin{aligned}
F_l & :=       f(u_{\varepsilon_{m'_l}}|_{[0,s]}, u_{\varepsilon_{n'_l}}|_{[0,s]},W|_{[0,s]}) \\
\tilde{F}_l & :=  f(v_l^1|_{[0,s]},v_l^2|_{[0,s]},  \tilde{W}_l|_{[0,s]}) \\
\tilde{F}   &:= f(v^1|_{[0,s]}, v^2|_{[0,s]}, \tilde{W}|_{[0,s]})
\end{aligned}
\end{equation*}
We show that for all $f$ as above, $s\leq t\leq T$ and $\varphi \in H^2(\T^d)$ we have
\begin{align}
&\tilde\E[ \langle M(t)-M(s), \varphi \rangle_{L^2(\T^d)} \tilde{F}]=0 \label{eq:proof-lemma31-M2eq} 
\end{align}
which implies that $M$ is an $L^2(\T^d)$-valued martingale  w.r.t $(\tilde{\F}^*_t)_{t\geq 0}$. To this end, we observe that due to $\uml$ being a solution of \eqref{eq:approx-cut-nonlocal}, a change from the $H^k(\T^d)$ to the $L^2(\T^d)$ scalar product, and sufficient regularity allowing for $u$ instead of $\bar{u}$ in the coefficients, yields
\begin{equation*}
\begin{aligned}
 0 = & \E\bigg[\bigg\{ \langle \uml(t)- \uml(s), \varphi \rangle -   \int_s^{t}\langle  \uml, \Delta \varphi \rangle \dd s\\
 &\quad  + \frac{1}{2}\int_s^{t} \langle \sumi \gamma^2_R( \|\uml\|_{H^{k}}) g_\iota(\uml\faltl)  
 \Div(\uml g_\iota(\uml\faltl))), \grad \varphi \rangle \dd s \\
 & \quad + \frac{1}{2}\int_s^{t} \langle \sumi \gamma^2_R( \|\uml\|_{H^{k}})  \uml g'_\iota(\uml\faltl) \Div(\uml g_\iota(\uml\faltl))\faltl), \grad \varphi \rangle \dd s \bigg\} F_l \bigg]. 
\end{aligned}
\end{equation*}
By equality in law \eqref{eq:lemma31-EqualInLaw}, we have 
 \begin{equation}\label{eq:lemma31-someformula}
 \begin{aligned}
 0 = & \tilde{\E}\bigg[\bigg\{ \langle v^1_l(t)- v^1_l(s), \varphi \rangle -   \int_s^{t}\langle  v^1_l, \Delta \varphi \rangle \dd s  \\
 & \quad +  \frac{1}{2}\int_s^{t} \langle \sumi \gamma^2_R( \|v^1_l\|_{H^{k}}) g_\iota(v^1_l\faltl) 
 \Div(v^1_l g_\iota(v^1_l\faltl))), \grad \varphi \rangle \dd s \\
 & \quad+ \frac{1}{2}\int_s^{t} \langle \sumi \gamma^2_R( \|v^1_l\|_{H^{k}})  v^1_l g_\iota'(v^1_l\faltl) \Div(v^1_l g_\iota(v^1_l\faltl))\faltl), \grad \varphi \rangle \dd s \bigg\} \tilde{F}_l \bigg]. \\
 \end{aligned}
 \end{equation}
Uniform integrability of all terms in \eqref{eq:lemma31-someformula} is ensured by \eqref{eq:lemma31-HkBdd}.  Due to the compact support of $\eta$ and $\|\eta\|_{L_1} = 1$, we have for some constant $C_\eta$, that depends on the support of $\eta$,
$$\|v^1_l \ast \eta_{\varepsilon_{m'_l}}(t) - v^1(t)\|_{H^1(\T^d)} \leq  \|v^1_l(t)- v^1(t)\|_{H^1(\T^d)} + {\varepsilon_{m'_l}} C_{\eta} \|v^1(t)\|_{H^2(\T^d)}.$$
 This ensures that $v^1_l \ast \eta_{\varepsilon_{m'_l}}$ converges to $v^1$ in $C([0,T];H^1(\T^d))$, due to $k\geq 2$ for every $d\in\N_{>0}$.
This, together with the convergence of $(v_l^1)_l$ in $C([0,T];H^k(\T^d))$, allows us to take the limit $l\to\infty$, yielding
 \begin{equation*}
 \begin{aligned}
 0 = & \tilde{\E}\bigg[\bigg\{ \langle v^1(t)- v^1(s), \varphi \rangle -   \int_s^{t}\langle  v^1, \Delta \varphi \rangle \dd s  \\
 & \quad +  \frac{1}{2}\int_s^{t} \langle \sumi \gamma^2_R( \|v^1\|_{H^{k}}) g_\iota(v^1) 
 \Div(v^1 g_\iota(v^1)), \grad \varphi \rangle \dd s \\
 & \quad+ \frac{1}{2}\int_s^{t} \langle \sumi \gamma^2_R( \|v^1\|_{H^{k}})  v^1 g_\iota'(v^1) \Div(v^1 g_\iota(v^1))), \grad \varphi \rangle \dd s \bigg\} \tilde{F} \bigg], \\
 \end{aligned}
 \end{equation*}
which is \eqref{eq:proof-lemma31-M2eq} by definition of $M$ and $\g$.

\smallskip
\textit{Explicit formula for $M$:}
We have
\begin{align}
&\tilde\E[ \int_0^T \sumi \gamma_R^2( \|\bar v^1\|_{H^{k}(\T^d)}) \|\Div(\g_\iota(\bar{v}^1))\|^2_{L^2(\T^d)} \dd s ] < \infty.  \label{eq:proof-lemma31-M1eq} 
\end{align}
Moreover, using the same arguments as in   \cite[p. 41]{FehrmanGess}, one can verify that for every function $f$ as above and all $s\leq t\leq T$ 
\begin{equation}\label{eq:proof-lemma31-BM1eq}
\tilde\E[(\tilde{W}^\iota_t - \tilde{W}^\iota_s) \tilde{F}] = 0 \quad \text{ and } \quad \tilde\E[(\tilde{W}^\iota_t\tilde{W}^\kappa_t - \tilde{W}^\iota_s\tilde{W}^\kappa_s - \delta_{\iota \kappa}(t-s)) \tilde{F}] = 0.
\end{equation}
Here, $\delta_{\iota \kappa}$ denotes the Kronecker delta. Hence, $(\tilde{W}^\iota)_{\iota\geq 1}$ is a family of independent Brownian motions w.r.t. the filtration $(\tilde{\F}^*_t)_{t\in[0,T]}$ by L\'evy's characterization of Brownian motion \cite[Chapter 4, (3.6) Theorem, (1.23) Proposition, (1.2) Proposition]{RevuzYor}.  Using that  $(\tilde{W}^\iota_t)_{t\geq 0}$ is right-continuous for each $\iota$ implies that $(\tilde{W}^\iota)_{\iota\geq 1}$ is also a family of independent Brownian motions w.r.t.  $(\tilde{\F}_t)_{t\in[0,T]}$.

By an calculation analogous to that for \eqref{eq:proof-lemma31-M2eq}, one can verify that for all $\varphi\in H^{2}(\T^d)$
\begin{align}
&\tilde\E\bigg[ \bigg\{|\langle M(t), \varphi \rangle_{L^2(\T^d)}|^2 -  |\langle M(s), \varphi \rangle_{L^2(\T^d)}|^2 - \int_s^t \sum_{\iota \geq 1} \gamma_R^2(\|v_1 \|_{H^{k}}) |\langle \Div(\g_\iota(v^1)), \varphi 
\rangle_{L^2(\T^d)}|^2 \dd r \bigg\}\tilde{F}\bigg]=0 \label{eq:proof-lemma31-M3eq} \\
&\tilde\E\bigg[ \bigg\{ \tilde{W}^\iota(t) \langle M(t), \varphi \rangle_{L^2(\T^d)} -  \tilde{W}^\iota(s)\langle M(s), \varphi \rangle_{L^2(\T^d)}+ \int_s^t \gamma_R(\|v_1 \|_{H^{k}})  \langle \Div(\g_\iota(v^1)), \varphi \rangle_{L^2(\T^d)}\dd r \bigg\}\tilde{F}\bigg]=0, \label{eq:proof-lemma31-M4eq}
\end{align}
which yields the quadratic variations $[\langle M,\varphi\rangle_{L^2(\T^d)},  \langle M,\varphi\rangle_{L^2(\T^d)}]_t$ and $[ \tilde{W}^\iota, \langle M,\varphi\rangle_{L^2(\T^d)}]_t $. We can now compute the quadratic variation of $(\langle M + \sum_{\iota \geq 1} \gamma_R(\|\bar{v}^1 \|_{H^{k}})  \Div(\g_\iota (\bar{v}^1)) \dd \tilde{W}^\iota_t,\varphi\rangle_{L^2(\T^d)})_{t\in[0,T]}$, implying
\begin{equation}\label{eq:Proof-lemma31-martingale}
M(t) = -\sum_{\iota \geq 1} \int_0^t \gamma_R(\|\bar{v}^1 \|_{H^{k}})  \Div(\g_\iota (\bar{v}^1)) \dd \tilde{W}^\iota_t  \qquad \tilde{\PP}\text{-a.s.}
\end{equation}
seen as an equality in $L^2(\T^d)$.

\smallskip
\textit{$v^1$ is a variational solution of \eqref{eq:approx-cut-local}:} Now we can conclude that $v^1$ is indeed a variational solution: As $(v_1, \hat{v}_1)$ is a $\U$-valued random variable and by our definition of  $(\tilde{\F}_t)_t$, we have that $(v_1(t))_{t\in[0,T]}$ is a continuous $H^k(\T^d)$-valued  $(\tilde{\F}_t)_t$-adapted process. By Fatou, convergence in $L^2([0,T],H^{k+1}(\T^d))$, the deterministic Gronwall inequality and \cref{lemma:H^k-uniform-in-eps}, we have
\begin{equation*}
 \E\left[\int_0^T \|\hat{v}^1(t)\|_{H^{k+1}}^2 \dd t\right] \leq \liminf_{l\to\infty} \E\left[\int_0^T \|\hat{v}^1_l(t)\|_{H^{k+1}}^2 \dd t\right]\leq C_{R,T,u_0}<\infty.
\end{equation*}
Hence, the $\dd t \otimes \dd \tilde{P}$ equivalence class $\hat{v}_1$ satisfies $\hat{v}_1\in L^2([0,T]\times \Omega, \dd t \otimes \dd \tilde{\PP}, H^{k+1}(\T^d))$.
By \eqref{eq:Proof-lemma31-martingale} we have that for all $\varphi\in C^\infty(\T^d)$ we have
\begin{equation*}\label{eq:Proof-lemma31-martingale-0}
 \begin{aligned}
\langle v^1(t), \varphi \rangle_{L^2(\T^d)} &  =  \langle v^1(0), \varphi \rangle_{L^2(\T^d)}  - \frac{1}{2}\int_0^{t} \langle \sumi \gamma_R^2( \| \bar v^1(s)\|_{H^{k}}) \g_\iota'(\bar v^1(s))\Div(\g_\iota(\bar{v}^1(s))), \grad \varphi \rangle_{L^2(\T^d)}  \dd s \\
& -  \int_0^{t}\langle \grad \bar{v}^1(s), \grad \varphi \rangle_{L^2(\T^d)}  \dd s  \\
& - \sumi \int_0^{t}  \gamma_R( \|\bar v^1(s)\|_{H^{k}}) \langle \Div(\g_\iota(\bar{v}^1(s))), \varphi  \rangle_{L^2(\T^d)} \dd \tilde{W}^\iota(s)  \qquad \forall t\geq 0,
 \end{aligned}
 \end{equation*}
which implies (by regularity of $v^1$) the claim.

\bigskip
\textbf{II Convergence of $(u_{\varepsilon_n})_{n\in\N}$ to $u$ in probability:}
We have shown that $(u_{\varepsilon_n})_{n\in\N}$  converges to some limit $v$ in probability. We choose some subsequence $(u_{\varepsilon_{n_l}})_{l\in\N}$  that converges $\PP$ almost surely.
By the same arguments as in the previous step, using the $\PP$-a.s. convergence, we can deduce that $v$ is a variational solution of \eqref{eq:approx-cut-local}. This implies the claim.
\end{proof}

Now we provide the proof of the main theorem:
\begin{proof}[Proof of \cref{thm:main}]
Let $(\varepsilon_n)_{n\in\N}\subseteq (0,1)$ such that $\lim_{n\to\infty} \varepsilon_n = 0$. Choose $\delta, \bar{\delta}\in(0,1)$ arbitrarily small. We show
\begin{equation}\label{eq:proof-of-main-theorem-assertion}
 \limsup_{n\to\infty} \PP\left[ \sup_{t\in[0,\tilde{\sigma}_m\wedge T]} \|u^{\varepsilon_n}(t) - u(t)\|_{H^k(\T^d)} >\delta\right] \leq \bar{\delta}.
\end{equation}
which implies the assertion of the theorem. We shorten the notation by $\tau:= \tilde{\sigma}_m \wedge T$. Define
\begin{equation*}
\sigma_R := \inf\{t\in[0,\tau] \mid \|u(t)\|_{H^k(\T^d)} \geq R\}\wedge\tau,  \qquad
\sigma_{n,R}  := \inf\{t\in[0,\tau]\mid \|u^{\varepsilon_n}(t)\|_{H^k(\T^d)} \geq R+1\}\wedge\tau.
\end{equation*}
Choose $R>0$ such that $\PP\left[ \sigma_R < \tau\right]\leq \bar{\delta}$. 
We have:
\begin{equation*}
\begin{aligned}
\PP\left[ \sup_{t\in[0,\tau]} \|u^{\varepsilon_n}(t) - u(t)\|_{H^k(\T^d)} >\delta\right] & \leq \PP\left[ \sup_{t\in[0,\tau\wedge\sigma_R\wedge\sigma_{n,R}]} \|u^{\varepsilon_n}(t) - u(t)\|_{H^k(\T^d)} >\delta\right] \\
& \quad 
+ \PP\left[ \sigma_{n,R} < \tau \wedge \sigma_R \right] + \PP\left[ \sigma_{R} < \tau\right].
\end{aligned}
\end{equation*}
Uniqueness of local-in-time solutions of \eqref{eq:approx-cut-nonlocal} and \eqref{eq:approx-cut-local}, see \cref{thm:local-in-spaceEquationCutoffWP}  and \cref{thm:nonlocal-in-spaceEquationCutoffWP}, ensures $u^{\varepsilon_n}= u^{\varepsilon_n, R+1}$ and $u = u^{R+1}$ on $[0,\tau\wedge\sigma_R\wedge\sigma_{n,R}]$, where $u^{\varepsilon_n, R+1}$ and $u^{R+1}$ denote the unique solutions of \eqref{eq:approx-cut-nonlocal} and \eqref{eq:approx-cut-local} respectively.
Hence, the convergence of the tamed SPDEs given by \cref{lemma:convergence-of-eq-with-cutoff} implies
\begin{equation*}
\begin{aligned}
& \lim_{n\to\infty}\PP\left[\sup_{t\in[0,\tau\wedge\sigma_R\wedge\sigma_{n,R}]} \|u^{\varepsilon_n}(t) - u(t)\|_{H^k(\T^d)} >\delta\right] \\
& \quad 
\leq \lim_{n\to\infty} \PP\left[\sup_{t\in[0,T]} \|u^{\varepsilon_n,R+1}(t) - u^{R+1}(t)\|_{H^k(\T^d)} >\delta\right] = 0.
\end{aligned}
\end{equation*}
Observing $\|u_0\|_{H^k(\T^d)} < R$ on $\{\sigma_R>0\}$, and $u^{\varepsilon_n}= u^{\varepsilon_n, R+1}$ on $[0,\sigma_{n,R}]$, we may conclude 
\begin{equation*}
 \begin{aligned}
\PP\left[ \sigma_{n,R} < \tau\wedge\sigma_R\right]
& \leq \PP\left[\sup_{t\in[0,\tau\wedge\sigma_R]} \|u^{\varepsilon_n}(t)\|_{H^k(\T^d)} \geq R+1, \|u_0\|_{H^k(\T^d)} < R\right] \\
& = \PP\left[\sup_{t\in[0,\tau\wedge\sigma_R]} \|u^{\varepsilon_n, R+1}(t)\|_{H^k(\T^d)} \geq R+1, \|u_0\|_{H^k(\T^d)} < R\right] \\
& \leq \PP\left[\sup_{t\in[0,T]} \|u^{\varepsilon_n, R+1}(t)-u^{R+1}(t)\|_{H^k(\T^d)} \geq 1/2\right] \\
& \quad +  \PP\left[\sup_{t\in[0,\sigma_R\wedge \tau]} \|u^{R+1}(t)\|_{H^k(\T^d)} \geq R+1/5, \|u_0\|_{H^k(\T^d)} < R\right] \\
 \end{aligned}
\end{equation*}
where the last term is $0$ due to the continuity of $t\mapsto \|u^{R+1}(t)\|_{H^k(\T^d)}$ ensured by \cref{thm:local-in-spaceEquationCutoffWP} and the definition of $\sigma_R$. By
\cref{lemma:convergence-of-eq-with-cutoff} the second last term converges to $0$ for $n\to\infty$. Combining the previous three estimates implies the assertion \eqref{eq:proof-of-main-theorem-assertion}.
\end{proof}


\section{Inviscid setting: Counterexample on entropic limits}
\label{sec:7}
Consider, for $\nu \in [0,\infty)$, $\varepsilon>0$, the family of non-local stochastic partial differential equations
\begin{equation}\label{eq:SPDEnonlocal-lambda}
\begin{aligned}
\dd \ue & = -  \sum_{\iota \geq 1} \Div(\ue g_{\iota}(\ue\falt,x)) \circ \dd W^\iota_t  +  \nu \Delta \ue \dd t, \qquad  u^\varepsilon(0) = u_0.
\end{aligned}
\end{equation}
and, for $\nu \in [0,\infty)$, the corresponding family of local stochastic partial differential equations
\begin{equation}\label{eq:SPDElocal-lambda}
\begin{aligned}
\dd u = -  \sum_{\iota \geq 1} \Div(ug_{\iota}(u,x)) \circ \dd W^\iota_t  + \nu \Delta u \dd t, \qquad  u(0) = u_0.
\end{aligned}
\end{equation}

For $\nu>0$, \cref{thm:main} establishes convergence of the solution $\ue$ of \eqref{eq:SPDEnonlocal-lambda} to the solution $u$ of \eqref{eq:SPDElocal-lambda} as $\varepsilon\to0$, on any time interval on which $u$ has the required regularity. Under suitable regularity and decay assumptions on $(g_\iota)_{\iota\geq1}$, \cref{rmk:globalMaxiSol} guarantees this regularity on every finite time interval for every non-negative initial datum $u_0\in L^2(\Omega,\F_0,\PP;H^k(\T^d))$, provided that $\nu>C_g$, where $C_g$ depends on the coefficients.

The purpose of this section is to show that such convergence may fail in the limiting case $\nu = 0$. More precisely, we present an example of an SPDE and initial data for which $\ue$ does not converge to the entropy solution $u$ when $\nu = 0$. This provides a stochastic analogue of the deterministic result in \cite[Counterexample 5.5]{ColomboCrippaSpinolo}, where the same phenomenon was observed when the stochastic term $\circ \mathrm{d}W$ is replaced by the deterministic differential $\mathrm{d}t$. For simplicity, we present the example on the domain $\mathbb{R}$ rather than on the torus.

\textbf{Proof strategy:} We begin by stating the assumptions. In \cref{lemma:7-2}, we prove that $u^\epsilon$ preserves the $L\log L$ entropy by suitably approximating the equation and applying It\^o's formula to the approximates. We then show in \cref{lemma:7-3} that the $L\log L$ entropy of $u$ is strictly negative for positive times. Combining these results, we conclude in \cref{counterexample} that $u^\varepsilon$ does not converge to the entropy solution $u$.

\textbf{Notation of this section:}
Let $\M$ denote the set of finite (positive) measures on $\R$ equipped with the Borel $\sigma$-algebra. Let $M_r(\R) \subset \M$ denote the space of finite measures $\mu$ with mass $r$, and equip it with the Kantorovich-Rubinstein norm $\|\mu\|_{KR}:= \sup_{\varphi \in \textrm{Lip}_1}(\int_\R \varphi \dd \mu)$ (whereby $\textrm{Lip}_1\subset C^0(\R,\R)$ denotes the set of Lipschitz continuous functions whose supremum norm and Lipschitz constant are bounded by $1$). We frequently use the canonical inclusion of non-negative $L^1(\R)$ functions into $\M$. Let $\mathcal{P}(\R)\subset \M$ denote the set of probability measures on $\R$, and $\mathcal{P}_2(\R):= \{\mu \in \mathcal{P}(\R) \mid \int x^2 \mu(dx) < \infty \}$.  Let $\eta$ be given as in \cref{assumption} and set $\mu\ast\eta(x): = \int_\R \eta(x-y)\dd \mu(y)$ for any $\mu \in \M$. Let $(W(t))_{t\geq 0}$ denote in this section a 1-dimensional Brownian motion. We assume in this section that the underlying filtered probability space $(\Omega, \F, \PP, (F_t)_{t\geq 0})$ has the following structure:
Let $(\Omega, \F,\PP)$ be a probability space with $U\sim\mathrm{Unif}[0,1]$ and let $(W,B)$ be 1-dimensional Brownian motions, all mutually independent. Let $(\F_t)_{t\ge0}$ be the usual augmentation of
\begin{equation*}
\mathcal F_t^0:=\sigma(U)\vee\sigma(W_s,B_s:0\le s\le t).
\end{equation*}
This choice ensures that the assumptions of \cite{CoghiGess} on the filtered probability space are directly satisfied. We say  $\mu\in L^1_\omega C_t \mathcal{M}_r$ if $\mu$ is an adapted continuous process
$\mu \colon [0,T] \times \Omega \to \mathcal{M}_r(\R)$ such that $\E[\sup_{t\in[0,T]}\|\mu_t\|_{KR}]<\infty$. We set for all $x\in \R$, $t\in[0,T]$
\begin{equation*}
\begin{aligned}
\mu \ast \eta(x,t) &:= \int_\R \eta(x-y) \mu_t
( \dd y).
\end{aligned}
\end{equation*}

We consider the non-local SPDE
\begin{equation}\label{eq:cntex-nonlocalSPDE}\tag{$SB_{\varepsilon>0,\nu=0}$}
\dd u^\varepsilon = -\partial_x (u^\varepsilon (u^\varepsilon\ast\eta_{\varepsilon})) \circ \dd W_t \text{ in } \R\times[0,T], \quad \ue(\cdot, 0) = u_0 \text{ on } \R,
\end{equation}
which fits into the framework of stochastic nonlinear non-local Fokker-Planck equations \cite[Definition 5.2]{CoghiGess} by defining
\begin{equation*}
\begin{aligned}
& a \colon  \R_+\times \R \times \M \to \R_{\geq 0}, \qquad 
&& a(t, x, \mu) := \frac{1}{2} (\mu \falt)^2
\\
&b \colon  \R_+\times \R \times \M \to \R, \qquad
&& b(t,x,\mu) := \frac{1}{2}  (\mu \falt) \partial_x (\mu\falt) - \frac{1}{2} \partial_x((\mu\falt) \mu)\falt
\\
& \sigma \colon \R_+\times \R \times \M \to \R, \qquad
&& \sigma(t, x,\mu) := \mu\falt. \\
\end{aligned}
\end{equation*}
The existence and uniqueness of solutions is guaranteed by \cite[Theorem 5.3, Theorem 5.4, Theorem 3.3]{CoghiGess} for $u_0 \in \mathcal{M}_r(\R)$. (To apply these results, the coefficients are first truncated so that the required assumptions are satisfied.)
Moreover, the unique solution $\ue$ is of regularity $L^1_\omega C_t \mathcal{M}_r$.
For details regarding the transformation from the Stratonovich form into the It\^{o} form for $u_0 \in \mathcal{P}_2(\R)$, see \cite[Proposition A.3]{CoghiGess}.

We also consider the local counterpart of the SPDE \eqref{eq:cntex-nonlocalSPDE}
\begin{equation}\label{eq:cntex-localSPDE}\tag{$SB_{\varepsilon=0,\nu=0}$}
\dd u = -\partial_x (u^2) \circ \dd W_t \text{ in } \R\times[0,T], \quad u(\cdot, 0) = u_0 \text{ on } \R
\end{equation}
for which existence and uniqueness of pathwise stochastic entropy solutions in the sense of \cite[Definition 3.1]{LionsPerthameSouganidis} is ensured by \cite[Theorem 3.2]{LionsPerthameSouganidis} for $u_0\in L^1(\R)\cap L^\infty(\R)$.

\begin{lemma}\label{lemma:7-2}
Let $T>0$, $\varepsilon>0$ and let $\eta$ be as defined in \cref{assumption}. Let  $(u^\varepsilon(t))_{t\in[0,T]}$  denote the solution of \eqref{eq:cntex-nonlocalSPDE} with initial condition $u_0 := \one_{[0,1]} \in L^1(\R)\cap L^\infty(\R)\cap M_{1}(\R)$.
Then we have for all $t\in[0,T]$
\begin{equation}\label{eq:cnt-example-proof:claim1}
\int_\R \ue(t)\log(\ue(t)) = \int_\R u_0\log(u_0) \qquad \PP\text{-a.s.}
\end{equation}
\end{lemma}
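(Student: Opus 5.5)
The idea is that \eqref{eq:cntex-nonlocalSPDE} is, in Stratonovich form, a pure transport equation $\dd u^\varepsilon = -\partial_x(u^\varepsilon b^\varepsilon)\circ \dd W_t$. Its velocity field $b^\varepsilon := u^\varepsilon\ast\eta_\varepsilon$ is smooth in $x$ uniformly in time, since $\|\partial_x^j b^\varepsilon\|_\infty \le \|u^\varepsilon(t)\|_{M_1}\|\partial_x^j\eta_\varepsilon\|_\infty$ and the mass is conserved. Because a Stratonovich transport (continuity) equation with a smooth velocity field is driven by a stochastic flow of diffeomorphisms, its solution is a push-forward of $u_0$. A push-forward of a density along a diffeomorphism preserves $\int u\log u$ exactly when the density is transported along characteristics together with its Jacobian. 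The reason: writing $u(t,X_t(x)) = u_0(x)/\partial_xX_t(x)$, we get
\[
\int u\log u\,\dd y=\int u_0(x)\log\bigl(u_0(x)/\partial_x X_t(x)\bigr)\dd x ,
\]
which is \emph{not} invariant unless $\partial_xX_t\equiv 1$. So the plan is instead an Itô computation in which the Itô–Stratonovich correction cancels the quadratic variation, as in the proofs of \cref{lemma:ItoLocalEq} and \cref{thm:nonlocal-in-spaceEquationWP}. The key observation is that for any $\varphi$, formally $\dd\int\varphi(u) = \int \varphi'(u)\,(-\partial_x(u b))\circ\dd W = \int (u\varphi'(u)-\varphi(u))\partial_x b\circ \dd W$. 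For $\varphi(u)=u\log u$ we have $u\varphi'-\varphi = u$, so $\dd\int u\log u = \int u\,\partial_x b\circ\dd W$. This is not zero in general, so I would need to check it more carefully. Here $b=u\ast\eta_\varepsilon$, and $\int u\,\partial_x(u\ast\eta_\varepsilon)\,\dd x=0$ by the symmetry of $\eta$: $\partial_x\eta_\varepsilon$ is odd, so the bilinear form $\langle u,\partial_x\eta_\varepsilon\ast u\rangle$ is antisymmetric and hence vanishes. This is the structural cancellation behind the lemma. In Stratonovich form no correction arises, so the claim follows once the chain rule is justified.

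Concretely, I would proceed in three steps. First, regularize: replace $u_0$ by a smooth, strictly positive approximation $u_0^\delta$, for instance a mollification of $\one_{[0,1]}$ plus $\delta$ times a Gaussian normalized to mass one. The solution $u^{\varepsilon,\delta}$ is then smooth and positive, obtained by solving the characteristics $\dd X=b(X)\circ\dd W$ coupled with the mean-field condition via \cite{CoghiGess}, and it has Gaussian tails. Second, apply Itô's formula (\cite[Proposition A.1]{Ito-DebusscheHofmanovaVovelle}, or the flow representation) to $\int\varphi_\kappa(u^{\varepsilon,\delta})$, where $\varphi_\kappa$ is a $C^2$ truncation of $u\log u$ with bounded second derivative. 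Expand the Itô correction and the quadratic variation and verify that they cancel up to terms of the form $\int(u\varphi_\kappa'-\varphi_\kappa)\,\partial_x(\ldots)$. Letting $\kappa\to0$ yields $\dd\int u\log u = \int u\,\partial_x(u\ast\eta_\varepsilon)\,\dd W + \text{(Itô terms)}$. Every term is a bilinear or trilinear expression in $u$ and $\eta_\varepsilon$, and each vanishes by the oddness of $\partial_x\eta_\varepsilon$ (and of $\partial_x^3\eta_\varepsilon$). Third, pass $\delta\to0$: use stability of the solution in $\|\cdot\|_{KR}$ from \cite{CoghiGess} and the fact that along the smooth flow $u^{\varepsilon,\delta}(t)$ is the push-forward density. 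This gives $L^1$ convergence together with a uniform $L^\infty$ bound, since $\partial_xX_t$ is bounded above and below pathwise. With tail control from second moments, this yields convergence of $\int u\log u$; at the end one notes that $\int u_0\log u_0=0$.

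The main obstacle is the second step, namely verifying that all Itô-correction terms integrate to zero. The correction contains the nonlocal term $-\tfrac12\partial_x((\mu\ast\eta_\varepsilon)\mu)\ast\eta_\varepsilon$ inside $b$, and the cancellation there is less transparent. If the direct calculation becomes messy, I would instead argue on the Stratonovich level: approximate $W$ by piecewise linear paths (Wong–Zakai). For each smooth path the deterministic identity $\frac{\dd}{\dd t}\int u\log u = \dot W\int u\,\partial_x(u\ast\eta_\varepsilon)=0$ holds exactly, and stability of the mean-field equation under Wong–Zakai approximation then transfers the conservation law to the limit.
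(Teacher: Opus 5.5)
Your plan is correct in substance. It rests on the same structural fact as the paper: with $\psi(u):=u\varphi'(u)-\varphi(u)$ one has $\psi(u)=u$ for $\varphi(u)=u\log u$, and the resulting forms vanish by symmetry of $\eta$. Where you differ is in where you place the approximation.

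The paper freezes $a,b,\sigma$ at the true solution $\ue$ and only smooths the datum ($u_0^m\in C_c^\infty$). Every $\uem$ then solves a \emph{linear} equation and is transported by the \emph{same} flow as $\ue$. So the convergence $\uem\to\ue$ is a linear $L^2$ estimate, the limit is identified by uniqueness of the frozen equation, and supports stay compact pathwise. The price is that entropy is not exactly conserved for $\uem$: the symmetric cancellations are evaluated with one argument $\uem$ and the others $\ue$, so they vanish only as $m\to\infty$ (this is the BDG step for $T_2^m$).

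You instead smooth the datum of the nonlinear problem. That gives exact conservation for each $\delta$, and your Step 2 does close. With $\sigma=u\falt$ and $a=\frac12\sigma^2$, the Itô drift of $\int\varphi(u)$ reduces to
\[
\tfrac12\int\bigl(u^2\varphi''(u)-\psi(u)\bigr)(\partial_x\sigma)^2-\tfrac12\int\psi(u)\,\sigma\,\partial_{xx}\sigma+\tfrac12\int\psi(u)\,(\sigma u)\ast\partial_{xx}\eta_\varepsilon .
\]
The first term vanishes since $u^2\varphi''=\psi=u$. The last two cancel \emph{each other}, not individually, by evenness of $\partial_{xx}\eta_\varepsilon$. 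The martingale term $-\int u\,\partial_x\sigma\,\dd W$ vanishes by oddness of $\partial_x\eta_\varepsilon$.

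The cost of your route is Step 3, which needs more than you state. Stability in $\|\cdot\|_{KR}$ gives only weak convergence, under which $\int u\log u$ is merely lower semicontinuous. That yields $\int\ue(t)\log\ue(t)\le 0$, which is exactly the useless direction for \cref{counterexample}, since that argument needs that no entropy is lost. For the reverse inequality you need strong $L^1$ convergence. Concretely, you need $C^1$ convergence of the characteristic flows $X^\delta\to X$, bounds on $\partial_xX^\delta$ uniform in $\delta$, and uniform tail control. KR stability does give $C^k$ convergence of the velocity fields $u^{\varepsilon,\delta}\falt\to\ue\falt$ and of the Itô correction. Passing from there to the flows, however, requires a Kunita-type stability theorem for SDEs with random semimartingale coefficients. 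The freezing trick avoids all of this.

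Finally, your objection to the push-forward formula is unfounded. After stochastic Fubini, $\int u_0(x)\log\partial_xX_t(x)\,\dd x$ is the time integral of precisely the forms you show to vanish. So entropy is conserved even though $\partial_xX_t\not\equiv 1$. Once $\ue(t)=(X_t)_\#u_0$ is known (from uniqueness of the frozen equation), this route would give the lemma directly for $u_0=\one_{[0,1]}$, without regularizing the datum.
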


\begin{proof}[Proof of \cref{lemma:7-2}]
We prove the claim w.l.o.g. for $\varepsilon=1$. We approximate $\ue$ via solutions $\uem$ of an SPDE with frozen coefficients and regularized initial values. Let $(u^m_0)_{m\in\N}$ denote a sequence of $C_c^\infty(\R,[0,\infty))$ functions  such that $\lim_{m\to\infty}\|u^m_0 - u_0\|_{L^2(\R)}=0$, $\|u_0^m\|_{L^1(\R)}= 1$ and $\textrm{supp}(u^m_0) \subset [-1,2]$. By \cite[Definition 4.3, Lemma 4.5, Theorem 4.8]{CoghiGess} we may define the approximations $\uem$ of $\ue$  as the unique solutions of the following frozen SPDE
\begin{equation*}
d \uem = \partial_{xx}^2(a_f \uem) \dd t - \partial_x(b_f \uem)\dd t - \partial_x(\sigma_f \uem) \dd W_t, \quad
\uem_0  = u^m_0
\end{equation*}
where
\begin{equation*}
\begin{aligned}
 \sigma_f(t, x,\omega)  := \sigma(t,x,\ue(t, \cdot, \omega)), \quad a_f (t, x, \omega) := a(t,x,\ue(t, \cdot, \omega)), \quad b_f (t,x,\omega) := b(t,x,\ue(t, \cdot, \omega)).
\end{aligned}
\end{equation*}
In the following we shorten the notation by dropping the $\omega$ in the coefficients.

It can be verified (using e.g. \cite[Theorem 3.3.2, Exercise 3.3.5, Theorem 3.3.3, Theorem 3.4.7, Theorem 4.4.2]{Kunita}) that
\begin{equation}\label{eq:explicitSol}
\uem(t,x) = u_0^m(X_{t,0}(x))Y(X_{t,0}(x),t)
\end{equation}
holds true, whereby $X_{s,t}(x)$ denotes the forward stochastic flow of diffeomorphisms  (ensured by \cite[Theorem 4.6.5]{Kunita}) of the SDE
\begin{equation*}
X_{s,t}(x) = x + \int_s^t \sigma_f(r,X_{s,r}(x)) \circ \dd W_r,
\end{equation*}
$X_{t,s} = X_{s,t}^{-1}$ denotes the inverse flow, and we set
\begin{equation*}
Y(x,t):= \exp\left(-\int_0^t (\partial_x \sigma_f)(s,X_{0,s}(x)) \circ \dd W_s \right).
\end{equation*}
In particular, \eqref{eq:explicitSol} and the smoothness of $X_{0,t}(x)$ and $Y(x,t)$ with respect to $x$ ensure sufficient regularity of $\uem$ for the applications of Ito's formula in the following computations.

A straightforward calculation (omitted here) yields for all $t\in[0,T]$
\begin{equation}\label{eq:lemma73-difference-estimate}
\begin{aligned}
\|u^{\varepsilon,m}(t) - u^{\varepsilon,n}(t)\|_{L^2(\R)}^2
& \leq \|u^{m}_0 - u^{n}_0\|_{L^2(\R)}^2  + \int_0^t C_1 \|u^{\varepsilon,m}(s) - u^{\varepsilon,n}(s)\|_{L^2(\R)}^2  \dd s + M^{m,n}_t
\end{aligned}
\end{equation}
for some continuous local martingale $M^{m,n}$ starting in $0$ and constants $C_1$ and $C_2$ depending on $\eta$ and $\|u_0\|_{L^1(\R)}$. Applying a stochastic Gronwall inequality to \eqref{eq:lemma73-difference-estimate} implies for some constant $C>0$ (independent of $m$)
\begin{equation*}
\begin{aligned}
& \E\left[\sup_{t\in[0,T]} \|u^{\varepsilon,m}(t) - u^{\varepsilon,n}(t)\|_{L^2(\R)}\right]
\leq  C \|u^{m}_0 - u^{n}_0\|_{L^2(\R)} \exp(C_1 T/2).
\end{aligned}
\end{equation*}

Hence, $(\uem)_m$ is a Cauchy sequence in $L^1(\Omega, L^\infty([0,T],L^2(\R)))$, and we denote its limit by $\bar{u}$. It can easily be verified that $\bar{u}$ solves
\begin{equation}\label{eq:frozenSPDE2}
\dd \bar{u}  =  \partial_{xx}(a(t,x,\ue)\bar{u}) - \partial_x(b(t,x,\ue)\bar{u}) - \partial_x(\sigma(t,x,\ue)\bar{u})\dd W_t
\end{equation}
in the sense of \cite[Definition 4.3]{CoghiGess}, noting that by boundedness of
$\cup_m \textrm{supp}_x u^{\varepsilon,m}(t,\omega)$ and  $\textrm{supp}_x \bar{u}(t,\omega)$  for  a.e. $\omega \in \Omega$ for all $t\in[0,T]$, and  $\|u^{\varepsilon,m}(t)\|_{L^1(\R)} =\|u^m_0\|_{L^1(\R)}$, $\|\bar{u}(t)\|_{L^1(\R)} \leq \|u_0\|_{L^1(\R)}$ we also have convergence of $u^{\varepsilon,m}$ to $\bar{u}$ in $L^1(\Omega, L^1([0,T],L^1(\R)))$, hence allowing us to view $\bar{u}$ as an adapted $(B(\R_+)\times \F)$ measurable process $\bar{u}: [0,T]\times \Omega \to M_1(\R)$.
Since $\ue$ also solves \eqref{eq:frozenSPDE2}, uniqueness of solutions \cite[Theorem 4.8]{CoghiGess} yields  $\ue=\bar{u}$.

We can estimate:
\begin{equation*}
\begin{aligned}
 \bigg|\int_{\R} \ue(t) \log(\ue(t)) -\int_{\R} u_0 \log(u_0) \bigg|
 & \leq  \bigg|\int_{\R} \ue(t) \log(\ue(t)) -\int_{\R} \uem(t) \log(\uem(t)) \bigg|  \\
 & \quad +  \bigg|\int_{\R} \uem(t)\log(\uem(t)) -\int_{\R} u^m_0 \log(u^m_0) \bigg|  \\
  & \quad +  \bigg|\int_{\R} u^m_0 \log(u^m_0) -\int_{\R} u_0 \log(u_0) \bigg| \ \\
  & = T_1^m + T_2^m + T_3^m.
 \end{aligned}
\end{equation*}
By the compact support (depending on $\omega$ and $t$) ensured by \eqref{eq:explicitSol} and by convergence in $L^2(\R)$, after passing to a subsequence, which we do not relabel,
\begin{equation*}
 \lim_{m\to\infty }\bigg|\int_{\R} \ue(t) \log(\ue(t)) -\int_{\R} \uem(t) \log(\uem(t)) \bigg| =0 \quad \forall t\in[0,T] \,\, \text{for a.e. }\omega.
\end{equation*}
Using the symmetry of $\eta$, It\^o's formula and the BDG inequality, one can verify that
$\lim_{m\to\infty}\E[|T_2^m|] = 0$. This implies that $T_1^m$, $T_2^m$, and
$T_3^m$ converge to $0$ in probability, and hence the claim follows.
\end{proof}

\begin{lemma}\label{lemma:7-3} 
Let $T>0$. Let $(u(t))_{t\in[0,T]}$  denote the solution of \eqref{eq:cntex-localSPDE} with initial condition $u_0 := \one_{[0,1]} \in L^1(\R)\cap L^\infty(\R)\cap M_{1}(\R)$.
 Then we have
 \begin{equation}\label{eq:cnt-example-proof:claim2}
\PP\left[\int_\R u(T)\log(u(T)) <0\right]=1.
\end{equation}
\end{lemma}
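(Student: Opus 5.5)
The plan is to compute the pathwise entropy solution of \eqref{eq:cntex-localSPDE} explicitly enough to see that $u(T)$ cannot be $\{0,1\}$-valued. Then I combine this with an elementary entropy inequality.

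\textbf{Step 1 (reduction to a pointwise statement).} By the comparison principle and $L^1$-conservation for pathwise entropy solutions \cite[Theorem 3.2]{LionsPerthameSouganidis}, a.s. we have $0\le u(T)\le 1$ and $\int_\R u(T)=1$. Since $z\log z\le 0$ on $[0,1]$, with equality only for $z\in\{0,1\}$, we get $\int_\R u(T)\log u(T)\le 0$. Equality holds iff $u(T)\in\{0,1\}$ a.e., i.e. iff $u(T)$ is the indicator of a set of measure $1$. It therefore suffices to show that, a.s., $u(T)$ takes values in $(0,1)$ on a set of positive measure.

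\textbf{Step 2 (explicit solution for smooth driving paths).} By the stability theory of \cite{LionsPerthameSouganidis}, the solution map $\zeta\mapsto u^\zeta$ is continuous, in $C([0,T];L^1)$, with respect to the sup-norm of the driving path. I therefore take piecewise linear paths $\zeta$ approximating $W$. On an interval where $\zeta'>0$, the equation is the Burgers equation $u_t+(u^2)_x=0$ run forward. There the left edge of the profile develops a rarefaction (a linear ramp from $0$ to $1$), while the right edge stays an admissible shock. On an interval where $\zeta'<0$, the flux is reversed, so the roles are swapped: the right edge becomes a ramp, and an existing left ramp is compressed.

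By induction over the monotone pieces, the solution has the following trapezoidal form as long as the plateau at height $1$ survives: a left ramp of width $2a_t$ rising from $0$ to $1$, a plateau, and a right ramp of width $2b_t$ falling from $1$ to $0$. Here
\[
a_t=\zeta_t-\min_{s\le t}\zeta_s,\qquad b_t=\max_{s\le t}\zeta_s-\zeta_t .
\]
This is because compressing a fan for a time equal to its opening time recompresses it exactly into a jump, and further reversal keeps that edge a shock. If the plateau is eaten up, the profile becomes a triangle with maximum strictly below $1$, and the conclusion of Step 1 is then immediate. Both descriptions are continuous in $\zeta$ and pass to the limit $\zeta\to W$.

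\textbf{Step 3 (conclusion).} For Brownian motion, a.s. $\max_{s\le T}W_s>\min_{s\le T}W_s$, so $a_T+b_T>0$. Hence at least one ramp of positive width is present, or the profile is a triangle. In either case $u(T)\in(0,1)$ on a set of positive measure, and Step 1 gives $\int_\R u(T)\log u(T)<0$ a.s.

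I expect the main obstacle to be Step 2. The difficulty is to justify rigorously the ramp-width formulas through the interaction regimes (plateau disappearance, a shock meeting a ramp) and to pass to the limit using the rough-path stability of \cite{LionsPerthameSouganidis}. If the bookkeeping becomes unwieldy, I would instead work with the antiderivative $v(x,t)=\int_{-\infty}^x u(y,t)\,\dd y$. This $v$ solves a Hamilton--Jacobi equation with rough time. For it, the solution along piecewise linear $\zeta$ is a composition of Hopf--Lax and reversed Hopf--Lax semigroups, which for the piecewise linear datum $v_0=\min(\max(x,0),1)$ can be computed directly, yielding the same widths $a_t,b_t$.
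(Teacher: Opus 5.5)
Your proof is correct in outline, but it takes a genuinely different route from the paper. The paper computes nothing. It quotes regularization by noise (\cite{Karlsen2017} together with L\'evy's theorem \cite[Theorem VI.2.3]{RevuzYor}, or \cite{GassiatGess}) to get a Lipschitz representative of $u(T)$ almost surely. It then observes that a continuous function on $\R$ with values in $[0,1]$ and mass $1$ cannot be $\{0,1\}$-valued, so it lies in $(0,1)$ on an open interval.

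Your explicit trapezoid shows what that citation encodes. The profile $u^\zeta(T)$ is Lipschitz exactly when $a_T>0$ and $b_T>0$; for Brownian motion this is the a.s. fact $\min_{s\le T}W_s<W_T<\max_{s\le T}W_s$ behind the paper's argument. The entropy claim, however, needs only $a_T+b_T>0$. Your argument is therefore more elementary and more general: it works for every non-constant continuous driving path, including $\zeta(t)=t$, where $u(T)$ keeps a shock and is not Lipschitz. The price is the wave bookkeeping in Step 2.

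One point in Step 2 must be made quantitative. ``Maximum strictly below $1$'' is not preserved under $L^1$-limits of $u^{\zeta_n}(T)$, so in the post-plateau regime you need a bound that is continuous in the path. Your own structure supplies it. The two sides keep slopes $1/(2a_t)$ and $1/(2b_t)$ (a jump when the width is $0$), so mass conservation forces the peak to equal $(\max_{s\le t}\zeta_s-\min_{s\le t}\zeta_s)^{-1/2}$ once the range is at least $1$. In particular the peak is exactly $1$, not strictly less, when the range is exactly $1$; this is harmless, since ramps are still present.

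With this formula, the bound $\|u^{\zeta_n}(T)\|_{L^\infty}\le(\max_{s\le T}\zeta_{n,s}-\min_{s\le T}\zeta_{n,s})^{-1/2}$ passes to the limit. For range below $1$, use the explicit trapezoid supported on $[\min_{s\le T}\zeta_s,\,1+\max_{s\le T}\zeta_s]$. Together these make Step 3 go through.
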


\begin{proof}
For any fixed $T>0$ the entropy solution $u(T)$ has a Lipschitz-continuous representative with probability $1$, by combining \cite[Theorem 2.5]{Karlsen2017} and \cite[Theorem VI.2.3]{RevuzYor}, or by \cite[Theorem 1.2]{GassiatGess}. Moreover, the entropy solution $u$ satisfies the maximum principle and preserves mass. This implies that there exists for a.e. $\omega\in\Omega$ some open interval of $\R$ for which $u(\cdot,T)$ takes values in $(0,1)$, and hence $\int u(T)\log(u(T))dx<0$ with probability one.
\end{proof}

Now we have all lemmas to prove the following counterexample that demonstrates that $\ue$ does not converge to the entropy solution $u$ in the inviscid case $\nu = 0$. It is the stochastic counterpart to \cite[Counterexample 5.5]{ColomboCrippaSpinolo}.
\begin{cntexample}\label{counterexample} Let $T>0$ and  let $\eta$ be as defined in \cref{assumption}. Consider the initial condition $u_0 := \one_{[0,1]} \in L^1(\R)\cap L^\infty(\R)\cap M_{1}(\R)$. For every $\varepsilon>0$ let  $(u^\varepsilon(t))_{t\in[0,T]}$  denote the solution of \eqref{eq:cntex-nonlocalSPDE}, and let $(u(t))_{t\in[0,T]}$  denote the solution of \eqref{eq:cntex-localSPDE}.
Then $\ue \in L^1_\omega C_t \mathcal{M}_{1} \cap L^1([0,T];L^1(\R))\cap L^\infty([0,T];L^2(\R))$ a.s. and for all $p>1$ and $(\varepsilon_{l})_{l\in\N} \subseteq (0,1)$ satisfying $\lim_{l\to\infty}\varepsilon_l = 0$  we have
\begin{equation*}
 \PP\left[\lim_{l\to\infty}\|u^{\varepsilon_l}(T) - u(T)\|_{L^p(\R)} =0 \right] =0.
 \end{equation*}
\end{cntexample}

\begin{proof}[Proof of \cref{counterexample}]
Choose some sequence $(\varepsilon_l)_{l\in\N} \subseteq (0,1)$ such that $\lim_{l\to\infty}\varepsilon_l = 0$.
By \cref{lemma:7-2} and \cref{lemma:7-3}, it follows that
\begin{equation*}
 A := \left\{\omega \in \Omega \,\, \bigg| \,\, \int_\R u(T)\log\big(u(T)\big) <0, \int_\R u^{\varepsilon_l}(T)\log\big(u^{\varepsilon_l}(T)\big) =0 \text{ for all } l\in\N, (W_t)_{t\in[0,T]} \text{ continuous} \right\}
\end{equation*}
is an event of probability $1$. Choose some $\omega\in A$ and drop it in the notation.
Assume that for some $p>1$ we have that $u^{\varepsilon_l}(T)\to u(T)$ in $L^{p}(\R)$ for $l\to\infty$. We observe that $u(T)$ has compact support: Choose $v_0\in C_c^\infty(\R)$ such that $v_0 \geq u_0$. Let $v$ be a solution to \eqref{eq:cntex-localSPDE} with initial condition $v_0$. Then by \cite[Lemma 3.9, Theorem 2.4]{Karlsen2017}, we can deduce that $v(T)$ is compactly supported, by noting it is equal to a solution driven by a finite piece-wise linear path and using the ordinary finite propagation. By the comparison principle, $u(T)$ is therefore also compactly supported. Then by \cite[Lemma 5.6]{ColomboCrippaSpinolo} and our choice of $\omega$ we have
\begin{equation}
 0 > \int_\R u(T)\log\big(u(T)\big) \geq \limsup_{l\to\infty} \int_\R u^{\varepsilon_l}(T)\log\big(u^{\varepsilon_l}(T)\big) = 0
\end{equation}
which is a contradiction. This implies that $\|u^{\varepsilon_l}(T)- u(T)\|_{L^p(\R)}$ does not converge to $0$, and hence the claim.
\end{proof}

\section*{AI disclosure statement.}
No AI was used in the development of proofs. ChatGPT was used to provide an additional check of the proofs, suggest opportunities to shorten the manuscript and improve the language. The authors
reviewed and verified all suggestions and take full responsibility for the
final content.

\section*{Acknowledgments} The first author is supported by the European Union (ERC, FluCo, grant agreement No. 101088488). Views and opinions expressed are however those of the author(s) only and do not necessarily reflect those of the European Union or of the European Research Council. Neither the European Union nor the granting authority can be held responsible for them. The second author acknowledges support by the Max Planck Society through the Research Group “Stochastic Analysis in the Sciences (SAiS)” and the DFG CRC/TRR 388 “Rough Analysis, Stochastic Dynamics and Related Fields”, Project A11.

\bibliographystyle{plain} 
\bibliography{ref} 

\begin{thebibliography}{10}

\bibitem{Aggarwal2025}
Aekta Aggarwal and Ganesh Vaidya.
\newblock Convergence of the numerical approximations and well-posedness:
  nonlocal conservation laws with rough flux.
\newblock {\em Math. Comp.}, 94(352):585--610, 2025.

\bibitem{AgrestiVeraar2022-Part1}
Antonio Agresti and Mark Veraar.
\newblock Nonlinear parabolic stochastic evolution equations in critical spaces
  part {I}. {S}tochastic maximal regularity and local existence.
\newblock {\em Nonlinearity}, 35(8):4100--4210, 2022.

\bibitem{AgrestiVeraar2022-Part2}
Antonio Agresti and Mark Veraar.
\newblock Nonlinear parabolic stochastic evolution equations in critical spaces
  part {II}: {B}low-up criteria and instataneous regularization.
\newblock {\em J. Evol. Equ.}, 22(2):Paper No. 56, 96, 2022.

\bibitem{AgrestiVeraar2024}
Antonio Agresti and Mark Veraar.
\newblock The critical variational setting for stochastic evolution equations.
\newblock {\em Probab. Theory Related Fields}, 188(3-4):957--1015, 2024.

\bibitem{Gess2024}
Diego Alonso-Or\'an, Aythami Bethencourt~de Le\'on, and So~Takao.
\newblock The {B}urgers' equation with stochastic transport: shock formation,
  local and global existence of smooth solutions.
\newblock {\em NoDEA Nonlinear Differential Equations Appl.}, 26(6):Paper No.
  57, 33, 2019.

\bibitem{Amorim2015}
Paulo Amorim, Rinaldo~M. Colombo, and Andreia Teixeira.
\newblock On the numerical integration of scalar nonlocal conservation laws.
\newblock {\em ESAIM Math. Model. Numer. Anal.}, 49(1):19--37, 2015.

\bibitem{Billingsley}
Patrick Billingsley.
\newblock {\em Convergence of probability measures}.
\newblock Wiley Series in Probability and Statistics: Probability and
  Statistics. John Wiley \& Sons, Inc., New York, second edition, 1999.
\newblock A Wiley-Interscience Publication.

\bibitem{Bressan2020}
Alberto Bressan and Wen Shen.
\newblock On traffic flow with nonlocal flux: a relaxation representation.
\newblock {\em Arch. Ration. Mech. Anal.}, 237(3):1213--1236, 2020.

\bibitem{Bressan2021}
Alberto Bressan and Wen Shen.
\newblock Entropy admissibility of the limit solution for a nonlocal model of
  traffic flow.
\newblock {\em Commun. Math. Sci.}, 19(5):1447--1450, 2021.

\bibitem{BrezisMironescu2018}
Ha\"im Brezis and Petru Mironescu.
\newblock Gagliardo-{N}irenberg inequalities and non-inequalities: the full
  story.
\newblock {\em Ann. Inst. H. Poincar\'e{} C Anal. Non Lin\'eaire},
  35(5):1355--1376, 2018.

\bibitem{BrezisMironescu2019}
Ha\"im Brezis and Petru Mironescu.
\newblock Where {S}obolev interacts with {G}agliardo-{N}irenberg.
\newblock {\em J. Funct. Anal.}, 277(8):2839--2864, 2019.

\bibitem{bugini2025nonlinear}
Fabio Bugini, Peter~K. Friz, and Wilhelm Stannat.
\newblock Nonlinear rough {F}okker-{P}lanck equations.
\newblock {\em arXiv preprint arXiv:2507.17469}, 2025.
\newblock Accepted for publication in Mathematical Physics, Analysis and
  Geometry.

\bibitem{Cardaliaguet19}
Pierre Cardaliaguet, Fran{\c{c}}ois Delarue, Jean-Michel Lasry, and
  Pierre-Louis Lions.
\newblock {\em The master equation and the convergence problem in mean field
  games}, volume 201 of {\em Annals of Mathematics Studies}.
\newblock Princeton University Press, Princeton, NJ, 2019.

\bibitem{Chiarello2026}
Felisia~Angela Chiarello, Alexander Keimer, and Lukas Pflug.
\newblock Nonlocal conservation laws with {$p$}-norm, the singular limit
  problem and applications to traffic flow.
\newblock {\em J. Math. Anal. Appl.}, 565(1):Paper No. 130960, 43, 2027.

\bibitem{Chouk2017}
Khalil Chouk and Benjamin Gess.
\newblock Path-by-path regularization by noise for scalar conservation laws.
\newblock {\em J. Funct. Anal.}, 277(5):1469--1498, 2019.

\bibitem{Coclite2025}
Giuseppe~Maria Coclite, Maria Colombo, Gianluca Crippa, Nicola De~Nitti,
  Alexander Keimer, Elio Marconi, Lukas Pflug, and Laura~V. Spinolo.
\newblock Oleinik-type estimates for nonlocal conservation laws and
  applications to the nonlocal-to-local limit.
\newblock {\em Journal of Hyperbolic Differential Equations}, 21(3):681--705,
  2024.

\bibitem{Coclite2023}
Giuseppe~Maria Coclite, Jean-Michel Coron, Nicola De~Nitti, Alexander Keimer,
  and Lukas Pflug.
\newblock A general result on the approximation of local conservation laws by
  nonlocal conservation laws: the singular limit problem for exponential
  kernels.
\newblock {\em Ann. Inst. H. Poincar\'e{} C Anal. Non Lin\'eaire},
  40(5):1205--1223, 2023.

\bibitem{CocliteDeNittiHuang2025}
Giuseppe~Maria Coclite, Nicola De~Nitti, and Kuang Huang.
\newblock Singular limit for a class of nonlocal conservation laws via
  compensated compactness.
\newblock arXiv preprint arXiv:2511.15631, 2025.

\bibitem{Coclite2021}
Giuseppe~Maria Coclite, Nicola De~Nitti, Alexander Keimer, and Lukas Pflug.
\newblock Singular limits with vanishing viscosity for nonlocal conservation
  laws.
\newblock {\em Nonlinear Anal.}, 211:Paper No. 112370, 12, 2021.

\bibitem{CoghiGess}
Michele Coghi and Benjamin Gess.
\newblock Stochastic nonlinear {F}okker-{P}lanck equations.
\newblock {\em Nonlinear Anal.}, 187:259--278, 2019.

\bibitem{Colombo2021}
Maria Colombo, Gianluca Crippa, Elio Marconi, and Laura~V. Spinolo.
\newblock Local limit of nonlocal traffic models: convergence results and total
  variation blow-up.
\newblock {\em Ann. Inst. H. Poincar\'e{} C Anal. Non Lin\'eaire},
  38(5):1653--1666, 2021.

\bibitem{ColomboCrippaMarconiSpinolo2023}
Maria Colombo, Gianluca Crippa, Elio Marconi, and Laura~V. Spinolo.
\newblock Nonlocal traffic models with general kernels: singular limit, entropy
  admissibility, and convergence rate.
\newblock {\em Arch. Ration. Mech. Anal.}, 247:Paper No. 18, 2023.

\bibitem{ColomboCrippaSpinolo}
Maria Colombo, Gianluca Crippa, and Laura~V. Spinolo.
\newblock On the singular local limit for conservation laws with nonlocal
  fluxes.
\newblock {\em Arch. Ration. Mech. Anal.}, 233(3):1131--1167, 2019.

\bibitem{DareiotisGess}
Konstantinos Dareiotis and Benjamin Gess.
\newblock Nonlinear diffusion equations with nonlinear gradient noise.
\newblock {\em Electron. J. Probab.}, 25:Paper No. 35, 43, 2020.

\bibitem{Ito-DebusscheHofmanovaVovelle}
Arnaud Debussche, Martina Hofmanov\'a, and Julien Vovelle.
\newblock Degenerate parabolic stochastic partial differential equations:
  quasilinear case.
\newblock {\em Ann. Probab.}, 44(3):1916--1955, 2016.

\bibitem{Deya2019}
Aur\'elien Deya, Massimiliano Gubinelli, Martina Hofmanov\'a, and Samy Tindel.
\newblock A priori estimates for rough {PDE}s with application to rough
  conservation laws.
\newblock {\em J. Funct. Anal.}, 276(12):3577--3645, 2019.

\bibitem{FehrmanGess2019WP}
Benjamin Fehrman and Benjamin Gess.
\newblock {Well-Posedness of Nonlinear Diffusion Equations with Nonlinear,
  Conservative Noise}.
\newblock {\em Arch. Ration. Mech. Anal.}, 233(1):249--322, 2019.

\bibitem{Fehrman2023}
Benjamin Fehrman and Benjamin Gess.
\newblock Non-equilibrium large deviations and parabolic-hyperbolic {PDE} with
  irregular drift.
\newblock {\em Invent. Math.}, 234(2):573--636, 2023.

\bibitem{FehrmanGess}
Benjamin Fehrman and Benjamin Gess.
\newblock Well-posedness of the {D}ean-{K}awasaki and the nonlinear
  {D}awson-{W}atanabe equation with correlated noise.
\newblock {\em Arch. Ration. Mech. Anal.}, 248(2):Paper No. 20, 60, 2024.

\bibitem{FlandoliGatarek}
Franco Flandoli and Dariusz Gatarek.
\newblock Martingale and stationary solutions for stochastic {N}avier-{S}tokes
  equations.
\newblock {\em Probab. Theory Related Fields}, 102(3):367--391, 1995.

\bibitem{Friedrich2024}
Jan Friedrich, Simone G\"ottlich, Alexander Keimer, and Lukas Pflug.
\newblock Conservation laws with nonlocal velocity: the singular limit problem.
\newblock {\em SIAM J. Appl. Math.}, 84(2):497--522, 2024.

\bibitem{Friedrich2025}
Jan Friedrich, Michael Herty, and Claudia Nocita.
\newblock Control of conservation laws in the nonlocal-to-local limit.
\newblock {\em arXiv preprint arXiv:2510.00677}, 2025.

\bibitem{friz2026mean}
Peter~K. Friz, Ioannis Gasteratos, Ulrich Horst, and Stefanos Theodorakopoulos.
\newblock Mean-field games with rough common noise: the linear-quadratic case.
\newblock {\em arXiv preprint arXiv:2602.19210}, 2026.

\bibitem{Friz2016}
Peter~K. Friz and Benjamin Gess.
\newblock Stochastic scalar conservation laws driven by rough paths.
\newblock {\em Ann. Inst. H. Poincar\'e{} C Anal. Non Lin\'eaire},
  33(4):933--963, 2016.

\bibitem{GassiatGess}
Paul Gassiat and Benjamin Gess.
\newblock Regularization by noise for stochastic {H}amilton-{J}acobi equations.
\newblock {\em Probab. Theory Related Fields}, 173(3-4):1063--1098, 2019.

\bibitem{Gassiat2024}
Paul Gassiat, Benjamin Gess, Pierre-Louis Lions, and Panagiotis~E. Souganidis.
\newblock Long-time behavior of stochastic {H}amilton-{J}acobi equations.
\newblock {\em J. Funct. Anal.}, 286(4):Paper No. 110269, 49, 2024.

\bibitem{ConvexGronwall}
Sarah Geiss.
\newblock Sharp convex generalizations of stochastic {G}ronwall inequalities.
\newblock {\em J. Differential Equations}, 392:74--127, 2024.

\bibitem{GessKonarovskyi23}
Benjamin Gess, Rishabh~S. Gvalani, and Vitalii Konarovskyi.
\newblock Conservative {SPDE}s as fluctuating mean field limits of stochastic
  gradient descent.
\newblock {\em Probab. Theory Related Fields}, 192(3-4):1447--1515, 2025.

\bibitem{Gess2018}
Benjamin Gess and Mario Maurelli.
\newblock Well-posedness by noise for scalar conservation laws.
\newblock {\em Comm. Partial Differential Equations}, 43(12):1702--1736, 2018.

\bibitem{Gess2015}
Benjamin Gess and Panagiotis~E. Souganidis.
\newblock Scalar conservation laws with multiple rough fluxes.
\newblock {\em Commun. Math. Sci.}, 13(6):1569--1597, 2015.

\bibitem{Gess2017}
Benjamin Gess and Panagiotis~E. Souganidis.
\newblock Long-time behavior, invariant measures, and regularizing effects for
  stochastic scalar conservation laws.
\newblock {\em Comm. Pure Appl. Math.}, 70(8):1562--1597, 2017.

\bibitem{Gyongy-Krylov}
Istv\'an Gy\"ongy and Nicolai Krylov.
\newblock Existence of strong solutions for {I}t\^o's stochastic equations via
  approximations.
\newblock {\em Probab. Theory Related Fields}, 105(2):143--158, 1996.

\bibitem{Karlsen2017}
H{\aa}kon Hoel, Kenneth~Hvistendahl Karlsen, Nils~Henrik Risebro, and
  Erlend~Briseid Storr{\o}sten.
\newblock Path-dependent convex conservation laws.
\newblock {\em J. Differential Equations}, 265(6):2708--2744, 2018.

\bibitem{Karlsen2018}
H{\aa}kon Hoel, Kenneth~Hvistendahl Karlsen, Nils~Henrik Risebro, and
  Erlend~Briseid Storr{\o}sten.
\newblock Numerical methods for conservation laws with rough flux.
\newblock {\em Stoch. Partial Differ. Equ. Anal. Comput.}, 8(1):186--261, 2020.

\bibitem{KeimerPflug2019}
Alexander Keimer and Lukas Pflug.
\newblock On approximation of local conservation laws by nonlocal conservation
  laws.
\newblock {\em J. Math. Anal. Appl.}, 475(2):1927--1955, 2019.

\bibitem{Kotelenez95}
Peter Kotelenez.
\newblock A class of quasilinear stochastic partial differential equations of
  {M}c{K}ean-{V}lasov type with mass conservation.
\newblock {\em Probab. Theory Related Fields}, 102(2):159--188, 1995.

\bibitem{Kunita}
Hiroshi Kunita.
\newblock {\em Stochastic flows and stochastic differential equations},
  volume~24 of {\em Cambridge Studies in Advanced Mathematics}.
\newblock Cambridge University Press, Cambridge, 1997.
\newblock Reprint of the 1990 original.

\bibitem{KurtzXiong1999}
Thomas~G. Kurtz and Jie Xiong.
\newblock Particle representations for a class of nonlinear {SPDE}s.
\newblock {\em Stochastic Process. Appl.}, 83(1):103--126, 1999.

\bibitem{KurtzXiong99}
Thomas~G. Kurtz and Jie Xiong.
\newblock A stochastic evolution equation arising from the fluctuations of a
  class of interacting particle systems.
\newblock {\em Commun. Math. Sci.}, 2(3):325--358, 2004.

\bibitem{Lenglart}
{\'E}rik Lenglart.
\newblock Relation de domination entre deux processus.
\newblock {\em Ann. Inst. H. Poincar\'e{} Sect. B (N.S.)}, 13(2):171--179,
  1977.

\bibitem{LionsPerthameSouganidis}
Pierre-Louis Lions, Beno\^it Perthame, and Panagiotis~E. Souganidis.
\newblock Scalar conservation laws with rough (stochastic) fluxes.
\newblock {\em Stoch. Partial Differ. Equ. Anal. Comput.}, 1(4):664--686, 2013.

\bibitem{Lions2014}
Pierre-Louis Lions, Beno\^it Perthame, and Panagiotis~E. Souganidis.
\newblock Scalar conservation laws with rough (stochastic) fluxes: the
  spatially dependent case.
\newblock {\em Stoch. Partial Differ. Equ. Anal. Comput.}, 2(4):517--538, 2014.

\bibitem{LiuRoeckner}
Wei Liu and Michael R\"ockner.
\newblock {\em Stochastic partial differential equations: an introduction}.
\newblock Universitext. Springer, Cham, 2015.

\bibitem{marconi2025nonlocal}
Elio Marconi and Laura~V. Spinolo.
\newblock Well-posedness and singular limit for a nonlocal {GARZ} model.
\newblock {\em Nonlinearity}, 39(8):Paper No. 085009, 25, 2026.

\bibitem{nikolaev2025quantitative}
Paul Nikolaev.
\newblock Quantitative relative entropy estimates for interacting particle
  systems with common noise.
\newblock {\em SIAM J. Math. Anal.}, 57(3):3071--3109, 2025.

\bibitem{nikolaev2025fluctuation}
Paul Nikolaev.
\newblock Fluctuation behaviour for interacting particle systems with common
  noise.
\newblock {\em Stoch. Partial Differ. Equ. Anal. Comput.}, 2026.
\newblock \url{https://doi.org/10.1007/s40072-026-00431-1}.

\bibitem{RevuzYor}
Daniel Revuz and Marc Yor.
\newblock {\em Continuous Martingales and {Brownian} Motion}, volume 293 of
  {\em Grundlehren der mathematischen Wissenschaften}.
\newblock Springer, Berlin, Heidelberg, third edition, 1999.

\bibitem{RocknerShangZhang2024}
Michael R{\"o}ckner, Shijie Shang, and Tusheng Zhang.
\newblock Well-posedness of stochastic partial differential equations with
  fully local monotone coefficients.
\newblock {\em Math. Ann.}, 390(3):3419--3469, 2024.

\bibitem{RocknerShangZhang2026Correction}
Michael R{\"o}ckner, Shijie Shang, and Tusheng Zhang.
\newblock {Correction}: Well-posedness of stochastic partial differential
  equations with fully local monotone coefficients.
\newblock {\em Math. Ann.}, 396:Paper No. 46, 2026.

\bibitem{RotskoffVanden2022}
Grant~M. Rotskoff and Eric Vanden-Eijnden.
\newblock Trainability and accuracy of artificial neural networks: an
  interacting particle system approach.
\newblock {\em Comm. Pure Appl. Math.}, 75(9):1889--1935, 2022.

\bibitem{Simon}
Jacques Simon.
\newblock Compact sets in the space ${L}^p(0,{T}; {B})$.
\newblock {\em Annali di Matematica Pura ed Applicata}, 146:65--96, 1986.

\bibitem{zimper2025mean}
Sebastian Zimper, Ana Djurdjevac, Carsten Hartmann, Christof Sch{\"u}tte, and
  Nata{\v{s}}a~Djurdjevac Conrad.
\newblock Mean-field optimal control with stochastic leaders.
\newblock {\em arXiv preprint arXiv:2512.19201}, 2025.

\end{thebibliography}

\end{document}